\newtheorem{thm}{Theorem}[section]
\newtheorem{cor}[thm]{Corollary}
\newtheorem{lem}[thm]{Lemma}
\theoremstyle{definition}
\newtheorem{defn}[thm]{Definition}
\theoremstyle{remark}
\newtheorem{rem}[thm]{Remark}
\numberwithin{equation}{section}
\newenvironment{enui}{
    \begin{enumerate}

    }{\end{enumerate}}
\newenvironment{enuiap}[1]{
    \begin{enumerate}\setcounter{enumi}{#1}

    }{\end{enumerate}}
\newenvironment{Aeqi}[1]{
    \begin{enumerate}\setcounter{enumi}{#1}

    }{\end{enumerate}}
\newenvironment{Aeqiq}[1]{
    \begin{enumerate}\setcounter{enumi}{#1}

    }{\end{enumerate}}
\newenvironment{Aeqie}[1]{
    \begin{enumerate}\setcounter{enumi}{#1}

    }{\end{enumerate}}
\newenvironment{enuiard}{
    \begin{enumerate}

    }{\end{enumerate}}
\newcommand{\beql}[1]{\begin{equation}\label{#1}}
    \newcommand{\eeq}{\end{equation}}
\newcommand{\benui}{\begin{enui}}
    \newcommand{\eenui}{\end{enui}}
\newcommand{\bproof}{\begin{proof}}
    \newcommand{\eproof}{\end{proof}}
\newcommand{\bdefn}{\begin{defn}}
    \newcommand{\bdefnl}[1]{\bdefn\label{#1}}
    \newcommand{\bdefnnl}[2]{\bdefn[#1]\label{#2}}
    \newcommand{\edefn}{\end{defn}}
\newcommand{\bthm}{\begin{thm}}
    \newcommand{\bthml}[1]{\bthm\label{#1}}
    \newcommand{\bthmnl}[2]{\bthm[#1]\label{#2}}
    \newcommand{\ethm}{\end{thm}}
\newcommand{\blem}{\begin{lem}}
    \newcommand{\bleml}[1]{\blem\label{#1}}
    \newcommand{\blemnl}[2]{\blem[#1]\label{#2}}
    \newcommand{\elem}{\end{lem}}
\newcommand{\brem}{\begin{rem}}
    \newcommand{\breml}[1]{\brem\label{#1}}
    \newcommand{\erem}{\end{rem}}
\newcommand{\bcor}{\begin{cor}}
    \newcommand{\bcorl}[1]{\bcor\label{#1}}
    \newcommand{\bcornl}[2]{\bcor[#1]\label{#2}}
    \newcommand{\ecor}{\end{cor}}
\newcommand{\zitaa}[2]{\cite[#2]{#1}}
\newcommand{\capp}[1]{Appendix~#1}
\newcommand{\csec}[1]{Section~#1}
\newcommand{\csubsec}[1]{Subsection~#1}
\newcommand{\cch}[1]{Chapter~#1}
\newcommand{\cthm}[1]{Theorem~#1}
\newcommand{\rcor}[1]{Corollary~\ref{#1}}
\newcommand{\rlem}[1]{Lemma~\ref{#1}}
\newcommand{\rlemss}[2]{Lemmas~\ref{#1} and~\ref{#2}}
\newcommand{\rdefn}[1]{Definition~\ref{#1}}
\newcommand{\rrem}[1]{Remark~\ref{#1}}
\newcommand{\rthm}[1]{Theorem~\ref{#1}}
\newcommand{\rthmss}[2]{Theorems~\ref{#1} and ~\ref{#2}}
\newcommand{\rsubsec}[1]{Subsection~\ref{#1}}
\newcommand{\rsubsecsd}[2]{Subsections~\ref{#1}--\ref{#2}}
\newcommand{\rsec}[1]{Section~\ref{#1}}
\newcommand{\rsecss}[2]{Sections~\ref{#1} and~\ref{#2}}
\newcommand{\rch}[1]{Chapter~\ref{#1}}
\newcommand{\rfig}[1]{Figure~\ref{#1}}
\newcommand{\ab}{\ek{a,b}}
\newcommand{\albe}{\ek{\al,\be}}
\DeclareMathOperator*{\prodl}{\overset{\curvearrowleft}{\prod}}
\DeclareMathOperator*{\prodr}{\overset{\curvearrowright}{\prod}}
\DeclareMathOperator*{\intl}{\overset{\curvearrowleft}{\int}}
\DeclareMathOperator*{\intr}{\overset{\curvearrowright}{\int}}
\DeclareMathOperator{\im}{Im}
\DeclareMathOperator{\re}{Re}
\newcommand{\reA}{\re A}
\newcommand{\imA}{\im A}
\DeclareMathOperator{\range}{\mathcal{R}}
\newcommand{\ran}[1]{\range\rk{#1}}
\DeclareMathOperator{\sign}{sign}
\newcommand{\dif}{\mathrm{d}}
\newcommand{\rk}[1]{(#1)}
\newcommand{\ek}[1]{[#1]}
\newcommand{\rkb}[1]{\left(#1\right)}
\newcommand{\ekb}[1]{\left[#1\right]}
\newcommand{\gkb}[1]{\left\{#1\right\}}
\newcommand{\iu}{\mathrm{i}}
\newcommand{\bhh}{\mathbf{h}}
\newcommand{\smat}[1]{\rkb{\begin{smallmatrix}#1\end{smallmatrix}}}
\newcommand{\mat}[1]{(#1)}
\newcommand{\ko}[1]{\overline{#1}} 
\newcommand{\ec}{\mathrm{e}}
\newcommand{\bA}{\mathbf{A}}
\newcommand{\inv}{{-1}}
\newcommand{\ad}{\ast}
\newcommand{\invad}{{-\ad}}
\newcommand{\teg}{e.\,g.}
\newcommand{\tae}{a.\,e.}
\newcommand{\absb}[1]{\left\lvert #1\right\rvert}
\newcommand{\abs}[1]{\lvert #1\rvert}
\newcommand{\clo}[1]{\overline{#1}}
\newcommand{\oc}{\bot}
\newcommand{\red}{\mathrm{red}}
\newcommand{\pri}{\mathrm{pri}}
\newcommand{\kqk}{{\rk{\mathrm{Q}}}}
\newcommand{\kek}{{\rk{\mathrm{F}}}}
\newcommand{\ocol}[5]{\rk{#1,#2;#3,#4,#5}}
\newcommand{\hgapj}{\ocol{\cH}{\cG}{A}{\Phi}{J}}
\newcommand{\At}{{\widetilde{A}}}
\newcommand{\Phit}{{\widetilde{\Phi}}}
\newcommand{\tcof}{c.\,o.\,f.}
\newcommand{\tJ}[1]{$J$\nobreakdash-#1}
\newcommand{\tG}[1]{$\cG$\nobreakdash-#1}
\newcommand{\tie}{i.\,e.}
\newcommand{\tresp}{resp.}
\newcommand{\norm}[1]{\lVert #1\rVert}
\newcommand{\normb}[1]{\left\lVert #1\right\rVert}
\newcommand{\defeq}{:=}
\newcommand{\eqdef}{=:}
\newcommand{\Mat}[1]{\begin{pmatrix}#1\end{pmatrix}}
\newcommand{\ip}[2]{(#1,#2)}
\newcommand{\ipa}[2]{\langle#1,#2\rangle}
\newcommand{\ipab}[2]{\left\langle#1,#2\right\rangle}
\newcommand{\il}[1]{\item\label{#1}}
\DeclareMathOperator{\Rstr}{Rstr}
\DeclareMathOperator{\pr}{pr}
\newcommand{\rstr}[2]{\Rstr_{#2}#1}
\newcommand{\proj}[2]{\pr_{#2}#1}
\newcommand{\set}[1]{\{#1\}}
\newcommand{\setaca}[2]{\{#1\colon #2\}}
\newcommand{\setb}[1]{\left\{#1\right\}}
\newcommand{\vphi}{\varphi}
\newcommand{\tval}[1]{#1\nobreakdash-valued}
\newcommand{\EM}{I}
\newcommand{\Iu}[1]{\EM_{#1}}
\newcommand{\IG}{\Iu{\cG}}
\newcommand{\IH}{\Iu{\cH}}
\newcommand{\IC}{\Iu{\C}}
\newcommand{\hcH}{{\widehat{\mathfrak{H}}}}
\DeclareMathOperator{\tr}{Sp}
\newcommand{\R}{\mathbb{R}}
\newcommand{\C}{\mathbb{C}}
\newcommand{\cB}{{\mathfrak{B}}}
\newcommand{\cL}{\mathfrak{L}}
\newcommand{\cE}{{\mathfrak{E}}}
\newcommand{\cEo}{{\mathfrak{E}_0}}
\newcommand{\cF}{{\mathfrak{F}}}
\newcommand{\ck}{{\mathfrak{k}}}
\newcommand{\cM}{{\mathfrak{M}}}
\newcommand{\cm}{{\mathfrak{m}}}
\newcommand{\cH}{{\mathfrak{H}}}
\newcommand{\cHt}{{\widetilde{\cH}}}
\newcommand{\cG}{{\mathfrak{G}}}
\newcommand{\cO}{{\mathfrak{O}}}
\newcommand{\cI}{{\mathfrak{I}}}
\newcommand{\RR}{\mathbb{R}}
\newcommand{\CC}{\mathbb{C}}
\newcommand{\te}{\theta}
\newcommand{\la}{\lambda}
\newcommand{\Ga}{\Gamma}
\newcommand{\ga}{\gamma}
\newcommand{\ome}{\omega}
\newcommand{\al}{\alpha}
\newcommand{\ze}{\zeta}
\newcommand{\be}{\beta}
\newcommand{\si}{\sigma}
\newcommand{\Dl}{\Delta}
\newcommand{\Om}{\Omega}
\DeclareMathOperator{\rank}{rank}
\DeclareMathOperator{\col}{col}
\begin{document}

%
%
%
%
%
%
%
%
%

\title[Characteristic function of M.~S.~Liv\v{s}ic and triangular models]{Characteristic function of M.~S.~Liv\v{s}ic and triangular models of bounded linear operators}

\author[Dubovoy]{Vladimir K. Dubovoy}
\address{Department of Mathematics and Computer Science\\
    Karazin National University\\
    Kharkov\\
    Ukraine\\
    and\\
    Max Planck Institute for Human Cognitive and Brain Sciences\\
    Stephanstrasse~1A\\
    04103~Leipzig\\
    Germany\\
    and\\
    Max Planck Institute for Mathematics in the Sciences\\
    Inselstrasse~22\\
    04103~Leipzig\\
    Germany}
\email{dubovoy.v.k@gmail.com}


\author[Kirstein]{Bernd Kirstein}
\address{Universit\"at Leipzig\\
    Fakult\"at f\"ur Mathematik und Informatik\\
    PF~10~09~20\\
    D-04009~Leipzig\\
    Germany}
\email{kirstein@math.uni-leipzig.de}

\author[M\"adler]{Conrad M\"adler}
\address{Universit\"at Leipzig\\
    Zentrum f\"ur Lehrer:innenbildung und Schulforschung\\
    Prager Stra\ss{}e 38--40\\
    D-04317~Leipzig\\
    Germany}
\email{maedler@math.uni-leipzig.de} 

\author[M\"uller]{Karsten M\"uller}
\address{Max Planck Institute for Human Cognitive and Brain Sciences\\
    Stephanstrasse~1A\\
    04103~Leipzig\\
    Germany}
\email{karstenm@cbs.mpg.de}

\subjclass{47B28; 47A45; 47A40; 47A56}

\keywords{Completely non-selfadjoint operator, operator colligation, open system, characteristic operator function, $J$-expansive operator function, multiplicative integral, triangular model of a bounded linear operator}

\date{January 1, 2004}
\dedicatory{Dedicated to M.~S.~Liv\v{s}ic, M.~S.~Brodskii, and V.~P.~Potapov.}

\begin{abstract}
This paper is dedicated to the introduction in a circle of ideas and methods, which are connected with the notion of characteristic function of a non-selfadjoint operator.
We start with the consideration of \emph{closed and open systems} (\rsubsecsd{subsubsec2.1.1-1223}{subsubsec2.1.2-0715}).
In \rsubsecsd{subsubsec2.1.2-0715}{subsubsec2.1.3-0604} we introduce the notion of operator colligation and define the characteristic function of the operator colligation as transfer function of the corresponding open system.

In \rsec{sec3-1223} we state three basic properties of the \tcof{}.
First (\rsubsec{subsec3.1-0606}), we note that the \tcof{}\ is the full unitary invariant of the operator colligation.
Second (see \rthm{T3.7}), it turns out that the invariant subspaces of the corresponding operator are associated with left divisors of the \tcof{}.
Third, the $J$\nobreakdash-property of the \tcof{}\ (see \eqref{E3.11}--\eqref{E3.13}) is a basic property which determines the class of \tcof{}\ (see \rsec{S4}).
In \rch{S4} we describe the classes of characteristic functions which play an important role in our considerations.
In \rch{sec5-0725} we state necessary facts on multiplicative integral.
\rch{sec6-0917} is devoted to the factorization theorem (\rthm{T6.6A-0925}) for matrix-valued characteristic function.
In \rch{S1551} 
we construct a triangular Liv\v{s}ic model of bounded linear operator 
and as application we obtain some known results on dissipative operators.
\end{abstract}

\maketitle
\tableofcontents
\section{Preliminaries}
\subsection{Introduction}
This paper is dedicated to the introduction in a circle of ideas and methods, which are connected with the notion of characteristic function of a non-selfadjoint operator.

We start with the consideration of \emph{closed and open systems} (Subsections 2.1.1-2).
The main idea of M.~S.~Liv\v{s}ic, which is based on physical examples, is that to non-self-adjoint operators there correspond systems interacting with the external world. Such systems are usually called \emph{open} (see, \teg{}, \cite{Liv63,MR0182516,MR0347396,MR0837339,MR1109838,L54}).

``We will designate as open systems those physical systems connected with the external world by coupling channels (or, more shortly, channels when no confusion arises)'' (see \zitaa{MR0347396}{Introduction}).

Let $\cH$ be a Hilbert space and $A\in\ek{\cH}$ be a non-self-adjoint operator.
In this case (see \zitaa{MR0634096}{Preface}), ``it has become apparent that in many questions the natural object of investigation is not the operator itself but a more complicated object known as \emph{an operator colligation} or simply \emph{a colligation}.
The space on which an operator is defined is connected in an operator colligation with another space by means of a mapping whose role is to transfer a metric.
The deep connection between operator colligations and open physical systems lies in the fact that if the motion of the system involves a change in its energy, then there must exist a space playing the role of a window through which an interaction with the external world takes place.
The value of a colligation is to be found in the fact that it explicitly contains not only the operator defining the notion but also a corresponding `window'.''

In \rsubsecsd{subsubsec2.1.2-0715}{subsubsec2.1.3-0604} we introduce the notion of operator colligation and define the characteristic function of the operator colligation as transfer function of the corresponding open system.
In \rsubsec{subsubsec2.1.2-0715} will be verified that the coupling of open systems corresponds to the product of corresponding operator colligations and the product of its characteristic functions.
\rsubsec{subsec2.3-0604} contains basic facts of the theory of operator colligations.

In Section 3 we state three basic properties of the \tcof{} 
First (\rsubsec{subsec3.1-0606}), we note that the \tcof{}\ is the full unitary invariant of the operator colligation.
Second (see \rthm{T3.7}), it turns out that the invariant subspaces of the corresponding operator are associated with left divisors of the \tcof{} 
Third, the $J$\nobreakdash-property of the \tcof{}\ (see \eqref{E3.11}--\eqref{E3.13}) is a basic property which determines the class of \tcof{}\ (see \rsec{S4}).
The discovery of this property by M.~S.~Liv\v{s}ic served as stimulation for V.~P.~Potapov's \cite{MR0076882} construction of the multiplicative theory for \tJ{expansive} (\tJ{contractive}) matrix functions.
At the end of this section (\rsubsec{subsec3.4-0606}) we show that the signature of the operator $J$ determines the directions of the outer channels.

In \rch{S4}, we describe three classes of characteristic functions, which play an important role in this paper.
\rsecss{subsec4.1}{subsec4.2-0713} are devoted to the description of class $\Om_J$ of characteristic functions of bounded linear operators.
\rsec{subsec4.5-0713} is devoted to the investigation and description of the class $\Om_J^\kqk$ of characteristic functions of bounded linear operators with compact imaginary part.
\rsecss{subsec4.6-1001}{subsec4.8-0917} handle the classes $\Om_J\rk{\C^r}$ and $\Om_J^\kek$, \tresp, of matrix-valued characteristic functions.
In \rsecss{subsec4.3-0713}{subsec4.4-0715} basic facts on regular left divisors of characteristic functions are discussed.
Namely, these divisors correspond to invariant subspaces of the corresponding operator.

From the basic second property of characteristic functions (see \rthm{T3.7}) it follows that the chain of nested invariant subspaces of the basic operator corresponds to the multiplicative decomposition of the characteristic functions in corresponding factors.
In this framework we state necessary facts on multiplicative integrals in \rch{sec5-0725}.
Here we follow the remarkable (but in our opinion not very known) paper \cite{P2} of V.~P.~Potapov.

\rch{sec6-0917} deals with the factorization theorem for matrix-valued characteristic functions, \tie, characteristic functions of the class $\Om_J(\C^r)$ (see \rthm{T6.6A-0925}).
This theorem is a special case of V.~P.~Potapov's fundamental result on the multiplicative structure of $J$\nobreakdash-contractive matrix functions (see \cite{MR0076882}).
Here following the paper \cite{MR0100793}, we present an approach to the proof of this theorem.


As an application of these results we prove the known result due to M.~S.~Liv\v{s}ic that each completely non-selfadjoint dissipative Volterra operator $A$ with one-dimensional imaginary part is unitary equivalent to the integration operator $\cI$ in $L_2(0,\ell)$ (see \rthm{T7.13-0503}).
Furthermore, we obtain the known result that the integration operator $\cI$ in $L_2(0,\ell)$ is an unicellular operator (\rthm{T7.15-0519}).
It should be mentioned that from factorization \rthm{T6.6A-0925} it follows immediately the completeness criterion for the finite-dimensional invariant subspaces of a dissipative operator with finite-dimensional imaginary part (see \rsubsec{subsubsec7.5.3-0525}).

The volume of the paper does not allow to consider many other questions connected with the theory of characteristic functions.
We mention in this connection the monographs of M.~S.~Brodskii \cite{MR0322542}, S.~R.~Garcia, J.~Mashreghi, and W.~T.~Ross \cite{MR3526203}, I.~C.~Gohberg and M.~G.Krein \cite{MR0246142,MR0264447}, M.~S.~Liv\v{s}ic \cite{MR0347396}, M.~S.~Liv\v{s}ic and A.~A.~Yantsevich \cite{MR0634096}, N.~K.~Nikolskii \cite{MR1892647},  B.~Sz.-Nagy, C.~Foias, H.~Bercovici,  and L.~Kerchy \cite{SN} as well as the works J.~A.~Ball and N.~Cohen \cite{MR1115026}, L.~de~Branges and J.~Rovnyak \cite{MR0244795}, M.~S.~Brodskii \cite{MR0510672}, where closely related questions are considered.

\subsection{Basic notations}
Let $\cH$ and $\cG$ be Hilbert spaces (all Hilbert spaces considered in this paper are assumed to be complex and separable).
By $\ek{\cH,\cG}$ we denote the Banach space of bounded linear operators defined on $\cH$ and taking values in $\cG$.
If $\cG=\cH$ we use the notation $\ek{\cH}\defeq\ek{\cH,\cH}$.
If $\cH$ is a Hilbert space and $\cL$ is a subspace of $\cH$, then by $P_\cL$ we denote the orthogonal projection from $\cH$ onto $\cL$.

If $B\in\ek{\cH,\cG}$ and $\cH_0$ is a subspace of $\cH$, then the symbol $\rstr{B}{\cH_0}$ stands for the restriction of the operator $B$ to $\cH_0$, \tie{}
\[
\rstr{B}{\cH_0}\in\ek{\cH_0,\cG}
\qquad\text{and}\qquad
\rk{\rstr{B}{\cH_0}}h_0=Bh_0,
\quad h_0\in\cH_0.
\]
The symbol $\ran{B}$ stands for the image of the operator $B$.
We preserve these notations if $\cG=\cH$, \tie\ $B\in\ek{\cH}$.

If $A\in\ek{\cH}$, then by $\re{A}$ and $\im{A}$ we denote the real part and the imaginary part of $A$, \tresp{}
\[
\re{A}\defeq\frac{1}{2}\rk{A+A^\ad},
\qquad
\im{A}\defeq\frac{1}{2\iu}\rk{A-A^\ad}.
\]
Hence,
\[
 A
 =\re{A}+\iu\im{A}.
\]
By the symbol $\rho\rk{A}$ and $\si\rk{A}$, respectively, we denote the regular points and the spectrum of the operator $A$, respectively.

The symbol $\IH$ stands for the identity operator, acting in $\cH$, \tie\ $\IH\in\ek{\cH}$ and $\IH h=h$, $h\in\cH$.





\section{Open systems and operator colligations. Characteristic functions of colligations}

\subsection{Closed and open linear systems. Operator colligations. Characteristic function as transfer function of an open system}

\subsubsection{Closed linear systems}\label{subsubsec2.1.1-1223}
``Many problems of mathematical physics lead to equations of the form
\beql{E2.1}
\iu\frac{\dif h}{\dif t}+Ah
=0
\eeq
where $A$ is a bounded operator in a Hilbert space $\cH$ and $h\rk{t}$ is a state of the corresponding system.
In physical applications the energy (or the number of particles) in the state $h$ is proportional to the scalar product $\ipa{h}{h}$'' (see \cite{MR0837339}).
From \eqref{E2.1} we get
\beql{E2.2}
\frac{\dif}{\dif t}\ipa{h}{h}
=\ipa{\frac{\dif h}{\dif t}}{h}+\ipa{h}{\frac{\dif h}{\dif t}}
=\ipa{-\iu Ah}{h}+\iu\ipa{h}{Ah}
=\ipa{\frac{1}{\iu}\rk{A-A^\ad}h}{h}.
\eeq
If $A=A^\ad$ then in this case the energy $\ip{h}{h}$ is preserved and the linear system described by equation \eqref{E2.1} is called \emph{closed}.

We present a simple example (see \cite[\cch{I}, \S1]{MR0347396}).


\ \ \ \

Let us consider an oscillator (for example, a weight suspended on a spring).
Its equation of motion is of the form
\beql{E1.1}
m\ddot{x}+kx
=0.
\eeq
Introducing the coordinates $\xi  =\sqrt{m}\dot{x}$, $\eta=\sqrt{k}x$, we obtain the system
\beql{E1.2}
\frac{\dif\xi}{\dif t}=-\omega\eta,
\qquad\frac{\dif\eta}{\dif t}=\omega\xi
\qquad\rkb{\omega\defeq\sqrt{\frac{k}{m}}}.
\eeq
Equation \eqref{E1.2} can obviously be rewritten in matrix form
\[
-\iu\frac{\dif\bhh}{\dif t}
=\bhh\bA,
\]
where $\bA=\smat{0&-\iu\omega\\\iu\omega&0}$ is a Hermitian matrix and $\bhh=\mat{\xi,\eta}$ is a two-dimensional vector (row-matrix) describing the motion of the oscillator.
(Here, the operator is defined by a matrix that acts on the row vector.
Therefore, this matrix is on the right.
We will write the abstract operators on the left of the vectors on which they act.)
If we introduce the scalar product
\[
\ipa{\bhh_1}{\bhh_2}
=\xi_1\ko{\xi_2}+\eta_1\ko{\eta_2},
\]
the expression $\ipa{\bhh}{\bhh}=m\dot{x}^2+kx^2$ is equal to twice the energy of the oscillator in the state of the system described by the vector $\bhh$.

So we designate as \emph{closed} a system the state of which may be represented by a vector in some Hilbert space $\cH$, and whose motion is determined by an equation of the form
\beql{E1.3}
-\iu\frac{\dif\bhh}{\dif t}
=A\bhh ,
\eeq
where $A$ is a self-adjoint operator in $\cH$.




There (see \cite[\cch{I}, \S1]{MR0347396}) are also given other examples of closed physical systems.

\subsubsection{Open linear systems and operator colligations}\label{subsubsec2.1.2-0715}
If the operator $A$ in system \eqref{E2.1} is non-selfadjoint then, as it follows from \eqref{E2.2}, in this case the energy $\ipa{h}{h}$ in the state $h$ is not preserved.
This means that the system interacts with the external world.
In this case, the system is called \emph{open}.
As it follows from \eqref{E2.2}, this interaction is described by the imaginary part of the operator $A$.
The notion of an operator colligation, introduced below, makes it possible to describe this interaction.



\bdefnnl{\zitaa{MR0347396}{\csubsec{1.3}}, \zitaa{MR0634096}{\cch{1}, \S3}}{D2.1}
A set of two spaces $\cH$ and $\cG$, and three operators $A\in\ek{\cH}$, $\Phi\in\ek{\cH,\cG}$ and $J\in\ek{\cG}$,
\beql{E2.8}
J
=J^\ad
=J^\inv,
\eeq
connected by the relation
\beql{E2.9}
\frac{1}{\iu}\rk{A-A^\ad}
=\Phi^\ad J\Phi,
\eeq
is called \emph{an operator colligation} or simply \emph{a colligation} and is denoted by the symbol
\beql{E2.10}
\alpha
\defeq\hgapj .
\eeq

The operator A is called \emph{the fundamental operator} of the colligation $\alpha$.
The spaces $\cH$ and $\cG$ are called \emph{internal and external spaces}, respectively, and the operators $\Phi$ and $J$ are called \emph{channeled and directing operators}, respectively.
\edefn

From \eqref{E2.9} it  follows that
\beql{E10.1}
\ran{\imA  }
\subset\ran{\Phi^\ad}.
\eeq
The subspaces $\clo{\ran{\imA  }}$ and $\clo{\ran{\Phi^\ad}}$ are called \emph{non-Hermitian} and \emph{channeled}.

\bthmnl{\zitaa{MR0322542}{\cch{I}}}{T2.2}
For any operator $A\in\ek{\cH}$ and any subspace $\cE$ containing $\ran{\imA  }$ there exists a colligation $\hgapj $ for which the subspace $\cE$ is a channeled space.
\ethm
\bproof
Let
\[
B
\defeq2\rstr{\imA  }{\ran{\imA  }}
\]
and let $\set{E_\lambda}_{\lambda=-\infty}^{+\infty}$ be the resolution of the identity for the operator $B$, \tie{},
\[
B
=\int_{-\infty}^{+\infty}\lambda\dif E_\lambda.
\]
Consider the subspace
\[
\cEo
\defeq\cE\ominus\clo{\ran{\imA  }}.
\]
Then
\beql{E10.2}
\cH
=\clo{\ran{\imA  }}\oplus\cEo\oplus\cE^\oc,
\eeq
where $\cE^\oc\defeq\cH\ominus\cE$.

Now put
\beql{E10.3}
\cG
\defeq\clo{\ran{\imA  }}\oplus\cEo\oplus\cEo
\eeq
and let $\Phi\in\ek{\cH,\cG}$ and $J\in\ek{\cG}$ have the block representations
\beql{E10.400}
\Phi
\defeq
\begin{pmatrix}
    \abs{B}^{1/2}&0&0\\
    0&\Iu{\cEo}&0\\
    0&\Iu{\cEo}&0
\end{pmatrix},
\qquad J\defeq
\begin{pmatrix}
    \sign{B}&0&0\\
    0&\Iu{\cEo}&0\\
    0&0&-\Iu{\cEo}
\end{pmatrix},
\eeq
with respect to decompositions \eqref{E10.2} and \eqref{E10.3}, where 
\[
\abs{B}^{1/2}
\defeq\int_{-\infty}^{+\infty}\abs{\lambda}^{1/2}\dif E_\lambda,
\qquad \sign{B}
\defeq\int_{-\infty}^{+\infty}\sign{\lambda}\dif E_\lambda.
\]
It is directly verified that conditions \eqref{E2.8} and \eqref{E2.9} are fulfilled, where we note that for decomposition \eqref{E10.2} the representations
\[
\abs{\imA  }^{1/2}=\frac{1}{\sqrt2}\abs{B}^{1/2}\oplus0\oplus0,
\qquad\sign \imA  =\sign B\oplus0\oplus0
\]
hold.
\eproof

\breml{R2.3-0929}
If the subspace $\cE$ is finite dimensional then one can choose the space $\C^r$ as outer space where (see \eqref{E10.2})
\[
r
\defeq\dim\ran{\im A}+2\dim\cE_0
=\dim\cG.
\]
Indeed, if $V$ is a unitary mapping from $\cG$ onto $\C^r$, $\Phi'=V\Phi$, $J'=VJV^\inv$, then the quintuple
\[
\al'
\defeq\rk{\cH,\C^r;A,\Phi',J'}.
\]
is also a colligation together with the colligation $\hgapj$.
\erem

If we take the subspace $\cE\defeq\clo{\ran{\imA}}$ in \rthm{T2.2}, then $\cG=\clo{\ran{\imA}}$, and we obtain the colligation
\beql{E1330}
\al_{\imA  }
\defeq\hgapj ,
\eeq
where $J\defeq\sign B$ and $\Phi=\abs{B}^{1/2}\oplus0$ with respect to the decomposition
\[
\cH
=\clo{\ran{\imA}}\oplus\clo{\ran{\imA  }}^\oc.
\]


The operation of representing an operator $A\in\ek{\cH}$ as the fundamental operator of a colligation is called \emph{embedding} the operator $A$ into a colligation.
In \zitaa{MR0634096}{\cch{1}} 
a description of all embeddings of a given operator $A\in\ek{\cH}$ into colligations is obtained.

Let $\cH$ be a Hilbert space and $A\in\ek{\cH}$.
Let us embed the operator $A$ into a colligation $\al$ of form \eqref{E2.10}.
Assign to the colligation $\al$ the open system $\cO_\al$ 
defined by the relations (see, \teg, \cite{L54})

\beql{E2.11}
\left\{\begin{array}{l}
    \iu\frac{\dif h}{\dif t}+Ah=\Phi^\ad J\vphi^-,\\
    h\rk{0}=h_0,\\
    \vphi^+=\vphi^- - \iu\Phi h,
\end{array}\right.
\eeq
where $t$ is a real variable.  
In these relations we will assume that the \tval{$\cG$} function $\vphi^-=\vphi^-\rk{t}$ and the vector $h_0\in\cH$ are given.
This makes possible to find the \tval{$\cH$} function $h=h\rk{t}$. 
Substituting the given function $\vphi^-=\vphi^-\rk{t}$ and the found solution $h=h\rk{t}$ 
in the third relation, we determine the \tval{$\cG$} function $\vphi^+=\vphi^+\rk{t}$.
The functions $\vphi^-\rk{t}$, $h\rk{t}$, and $\vphi^+\rk{t}$ are called \emph{the input}, \emph{the internal state}, and \emph{the output} of the open system $\cO_\al$, respectively.


Colligation condition \eqref{E2.9} allows us to control the change of the quantity $\ipa{h\rk{t}}{h\rk{t}}$ (the energy of the internal state of system \eqref{E2.11}).
Indeed, from the first equation in \eqref{E2.11} the equalities
\begin{align*}
    \iu\ipab{\frac{\dif h}{\dif t}}{h}+\ipa{Ah}{h}&=\ipa{\Phi^\ad J\vphi^-}{h},\\
    -\iu\ipab{h}{\frac{\dif h}{\dif t}}+\ipa{h}{Ah}&=\ipa{h}{\Phi^\ad J\vphi^-}
\end{align*}
follow.
From here we obtain
\[
\frac{\dif}{\dif t}\ipa{h}{h}+\ipab{\frac{A-A^\ad}{\iu}h}{h}
=-\iu\ipa{\Phi^\ad J\vphi^-}{h}+\iu\ipa{h}{\Phi^\ad J\vphi^-}.
\]
Taking into account colligation condition \eqref{E2.9}, we derive that
\beql{E2.12}
\frac{\dif}{\dif t}\ipa{h}{h}
=-\ipa{\Phi^\ad J\Phi h}{h}-\iu\ipa{\Phi^\ad J\vphi^-}{h}+\iu\ipa{h}{\Phi^\ad J\vphi^-}.
\eeq
In view of the third relation in \eqref{E2.11}, the right side of this equality can be transformed as follows:
\beql{E2.13}\begin{split}
    &-\ipa{\Phi^\ad J\Phi h}{h}-\iu\ipa{\Phi^\ad J\vphi^-}{h}+\iu\ipa{h}{\Phi^\ad J\vphi^-}\\
    &=-\iu\ipa{\Phi^\ad J\rk{\vphi^--\iu \Phi h}}{h}+\iu\ipa{\Phi h}{J\vphi^-}\\
    &=-\iu\ipa{J\vphi^+}{\Phi h}+\iu\ipa{\Phi h}{J\vphi^-}.
\end{split}\eeq
The third relation in \eqref{E2.11} can be rewritten in the form:
\[
\Phi h
=\iu \rk{\vphi^+-\vphi^-}.
\]
Therefore, from \eqref{E2.12} and \eqref{E2.13} we derive that
\[
\frac{\dif}{\dif t}\ipa{h}{h}
=-\ipa{J\vphi^+}{ \vphi^+-\vphi^-}-\ipa{ \vphi^+-\vphi^-}{J\vphi^-}.
\]
Consequently,
\beql{E2.14}
\frac{\dif}{\dif t}\ipa{h}{h}
=\ipa{J\vphi^-}{\vphi^-}-\ipa{J\vphi^+}{\vphi^+}.
\eeq
where $\ipa{J\vphi^-}{\vphi^-}$ and $\ipa{J\vphi^+}{\vphi^+}$ can be interpreted as energy flows through the input
and the output, respectively.  
This important relation is called \emph{the law of conservation of metric (energy)} for the open system $\cO_\al$ (see, \teg, \cite{MR0837339}, \cite{L54}).

Note that \zitaa{MR0347396}{Subsections~1.2,~1.3} contains physical reasonings and physical examples leading to the concept of an open system.

\subsubsection{Characteristic function as transfer function of an open system}\label{subsubsec2.1.3-0604}
Assume that in system \eqref{E2.11} $\vphi^-\rk{t}$ has the form:
\beql{E2.15}
\vphi^-\rk{t}
\defeq\ec^{\iu zt}\vphi_0^-,
\qquad\vphi_0^-\in\cG,\;z\in\CC,\;t\in\RR.
\eeq
In this case,
\beql{E2.16}
h\rk{t}=\ec^{\iu zt}h_0,
\qquad\vphi^+\rk{t}=\ec^{\iu zt}\vphi^+_0,
\qquad h_0\in\cH,\;\vphi^+_0\in\cG,\,
\eeq
and relations \eqref{E2.11} take the form:
\beql{E2.17}
\left\{\begin{array}{l}
    \rk{A-z\Iu{\cH}}h_0=\Phi^\ad J\vphi_0^-,\\
    \vphi_0^+=\vphi_0^--\iu \Phi h_0.
\end{array}\right.
\eeq

Note that relations \eqref{E2.17} completely determine the open system $\cO_\al$ of form \eqref{E2.11}. We will consider relations \eqref{E2.17} for a fixed $z$.
Under these conditions, the role of the input in these relations is played by the vector $\vphi_0^-$, and the vectors $h_0$ and $\vphi_0^+$ do the roles of the internal state and the output, respectively.

If $z\in\rho\rk{A}$, then
\beql{E2.18}
h_0=Q_{\alpha}\rk{z}\vphi_0^-,
\qquad\vphi_0^+=S_{\alpha}\rk{z}\vphi_0^-,
\eeq
where
\begin{align}
    Q_{\alpha}\rk{z}&\defeq\rk{A-z\Iu{\cH}}^\inv\Phi^\ad J,\label{E2.19}\\
    S_{\alpha}\rk{z}&\defeq\Iu{\cG}-\iu \Phi\rk{A-z\Iu{\cH}}^\inv\Phi^\ad J.\label{E2.20}
\end{align}

\bdefnnl{\cite{MR0062955}}{D2.4}
Let $\al$ be an operator colligation of form \eqref{E2.10}.
The operator function $S_{\alpha}\rk{z}, \ z\in\rho\rk{A}$, of form \eqref{E2.20} is called \emph{the characteristic operator function} (\tcof{}) of the colligation $\al$ (of the operator $A$) or \emph{the transfer function} of the open system $\cO_\al$ given by relations \eqref{E2.11}.
\edefn

Taking into account the described construction, the open system $\cO_{\al}$ can be schematically illustrated in the following way:

\begin{figure}[H]
    \centering
    \begin{tikzpicture}[decoration={
            markings,
            mark=at position 0.5 with {\arrowreversed{latex}}}
        ]
        
        \draw[postaction={decorate}] (0,0) --node[anchor=south] {$\vphi_0^+$} (1,0);
        \draw (2,0) circle (1cm);
        \node[label=center:{$h_0$}] at (2,0) {};
        \node[label=below:{$\cO_{\al}$}] at (2,-1) {};
        \draw[postaction={decorate}] (3,0) --node[anchor=south] {$\vphi_0^-$} (4,0);
    \end{tikzpicture}
    \caption{}\label{F0}
\end{figure}

In applications the inner space $\cH$ with operator $A$ acting in it is often called \emph{the intermediate system}.
The vector $h_0$ is called \emph{the state} of the intermediate system.

We note that, besides colligation \eqref{E1330}, the tuple
\beql{E1411}
\tilde\al_{\imA  }^\ad
\defeq\rk{\cH,\cG;A^\ad,J\Phi,-J}
\eeq
is also an operator colligation.
It is immediately checked that function \eqref{E2.23A} coincides with the \tcof{}\ of the colligation $\tilde\al_{\imA  }^\ad$, \tie{},
\[
W\rk{z}
=S_{\tilde\al_{\imA  }^\ad}\rk{z},
\qquad z\in\rho\rk{A^\ad}.
\]

\breml{R2.4A}
First the \tcof{}\ of a bounded linear operator in Hilbert space was introduced by M.~S.~Liv\v{s}ic in \cite{MR0062955} by the following formula:
\beql{E2.23A}
W\rk{z}
=\Iu{\ran{\imA  }}+2\rk{\sign \imA  }\abs{\imA  }^{1/2}\rk{A^\ad-z\IH}^\inv\abs{\imA  }^{1/2},
\qquad z\in\rho\rk{A^\ad},
\eeq
where the right side was considered only in the space $\ran{\imA  }$.
A more general definition was proposed by M.~S.~Brodskii (\cite{MR0080269,MR0131161,MR0131178}), see also the survey paper of M.~S.~Brodskii and M.~S.~Liv\v{s}ic \cite{MR0100793}. 
According to this definition the \tcof{}\ of operator $A\in\ek{\cH}$ could be any function of the type
\beql{E1357}
w\rk{z}
=\IG-2\iu K^\ad\rk{A-z\EM}^\inv KJ,
\qquad z\in\rho\rk{A},
\eeq
where $\cG$ some Hilbert space, $K\in\ek{\cG,\cH}$, $J\in\ek{\cG}$, and the following properties are fulfilled
\beql{E1358}
J=J^\ad,
\qquad J^2=\IG,
\qquad 
\imA =KJK^\ad.
\eeq
Note that this definition is equivalent to representation \eqref{E2.20} (see \rdefn{D2.4}).
It suffices to put $\Phi=\sqrt2K^\ad$.
Definitions \eqref{E1357} made it possible to formulate the ``multiplication theorem'' (see \rthm{T3.7}) without the previous restrictions and led naturally to the notion of operator colligation.
The construction of characteristic functions in terms of operator colligations was done in papers of M.~S.~Brodskii, Yu.~L.~Shmul\cprime jan \cite{MR0165361} and M.~S.~Brodskii, G.~E.~Kisilevskii \cite{MR0203459}.
A brief survey of the development of the concept of the characteristic function can be found in the paper of A.~V.~Kuzhel \cite{MR1299959}.
\erem

\subsubsection{Characteristic operator function.
    Relationship with M\"obius transformation}
In papers \cite{MR0062955,MR1473265} M.~S.~Liv\v{s}ic proposed the following approach to defining the characteristic operator function.

Consider the M\"obius transformation
\beql{E2.32}
\te_a\rk{z}
\defeq\frac{z-\ko a}{z-a},
\qquad\im a\neq0,
\eeq
by which the number $a$ is completely determined.
From \eqref{E2.32} we derive that
\beql{E2.33}
\te_a\rk{z}
=1-\iu\frac{a-\ko a}{\iu}\rk{a-z}^\inv
=1-\iu\sign\frac{a-\ko a}{\iu}\absb{\frac{a-\ko a}{\iu}}^{1/2}\rk{a-z}^\inv\absb{\frac{a-\ko a}{\iu}}^{1/2}.
\eeq
For the one-dimensional Hilbert space $\cH\defeq\CC$ the space $\ek{\cH}$ can be also identified with $\CC$, \tie, we will write $A_a\defeq a$, keeping in mind that
\[
A_ah\defeq ah,
\qquad h\in\cH.
\]
In this case,
\[
2\imA _a
=\frac{a-\ko a}{\iu}.
\]
Therefore, for embedding the operator $A_a$ into some colligation, it is natural to chose the external space $\cG\defeq\CC$ and the operators
\[
\Phi_a\defeq\absb{\frac{a-\ko a}{\iu}}^{1/2}\in\ek{\cH,\cG},
\qquad J_a\defeq\sign\frac{a-\ko a}{\iu}\in\ek{\cG}
\]
as the channeled and directing ones, respectively.
Thus, the 5\nobreakdash-tuple
\[
\alpha_a\defeq\rk{\cH,\cG;A_a,\Phi_a,J_a},
\qquad\cH=\cG=\CC,
\]
is a colligation.
Note that, in this case,
\[
\Phi_a^\ad
=\absb{\frac{a-\ko a}{\iu}}^{1/2}
\in\ek{\cG,\cH}.
\]
The introduced designations allow us to rewrite representation \eqref{E2.33} in the form
\beql{E2.34}
\te_a\rk{z}=\IG-\iu \Phi_a\rk{A_a-z\IH}^\inv\Phi_a^\ad J_a,
\qquad z\in\rho\rk{A_a}.
\eeq
Consequently, $S_{\al_a}\rk{z}=\te_a\rk{z}$, $z\in\rho\rk{A_a}$, \tie, M\"obius transformation \eqref{E2.32} is the characteristic function of the colligation $\al_a$.

Thus, from representation \eqref{E2.34} it is seen that the characteristic operator function of form \eqref{E2.20} in the case of an arbitrary bounded operator $A$ can be considered as a generalization of the M\"obius transformation of form \eqref{E2.33} (see, \teg, \cite{MR1473265}).

\subsection{Coupling of open systems, product of operator colligations  and factorizations of characteristic operator functions}

Let
\beql{E2.22}
\al_j\defeq\rk{\cH_j,\cG;A_j,\Phi_j,J},
\qquad j=1,2,
\eeq
be operator colligations and let $S_{\al_j}\rk{z}$ be the \tcof{}\ of the colligation $\al_j$ ($j=1,2$).
Note that the external space and directing operator are common to theses colligation.
Consider realizations (see \eqref{E2.17})
\begin{align}
    &\left\{\begin{array}{l}
        \rk{A_1-z\Iu{\cH_1}}h_{10}=\Phi_1^\ad J\vphi_{10}^-,\\
        \vphi_{10}^+=\vphi_{10}^--\iu \Phi_1h_{10}
    \end{array}\right.\label{E2.23B}
    \intertext{and}
    &\left\{\begin{array}{l}
        \rk{A_2-z\Iu{\cH_2}}h_{20}=\Phi_2^\ad J\vphi_{20}^-,\\
        \vphi_{20}^+=\vphi_{20}^--\iu \Phi_2h_{20}.
    \end{array}\right.\label{E2.24}
\end{align}
Here for each $z\in\rho\rk{A_j}$
\beql{E2.25}
\vphi_{j0}^-,\vphi_{j0}^+\in\cG,
\qquad h_{j0}\in\cH_j,
\qquad\vphi_{j0}^+=S_{\al_j}\rk{z}\vphi_{j0}^-
\qquad(j=1,2).
\eeq
If for each $z\in\rho\rk{A_1}\cap\rho\rk{A_2}$ we put
\beql{E2.26}
\vphi_{10}^-
=\vphi_{20}^+,
\eeq
then we obtain new relations of type \eqref{E2.17} with the input $\vphi_{20}^-$, the internal state $h_0\defeq h_{10}+h_{20}$ and the output $\vphi_{10}^+$.
Really, substituting the expression for $\vphi_{20}^+$ from \eqref{E2.24} in relations \eqref{E2.23B} instead of $\vphi_{10}^-$, we come to the relations
\beql{E2.27}
\left\{\begin{array}{l}
    \begin{aligned}\rk{A_1-z\Iu{\cH_1}}h_{10}+\iu\Phi_1^\ad J\Phi_2h_{20}&=\Phi_1^\ad J\vphi_{20}^-,\\
        \rk{A_2-z\Iu{\cH_2}}h_{20}&=\Phi_2^\ad J\vphi_{20}^-,
    \end{aligned}\\
    \vphi_{10}^+=\vphi_{20}^--\iu\rk{\Phi_1h_{10}+\Phi_2h_{20}}
\end{array}\right.
\eeq
or
\beql{E2.28}
\left\{\begin{array}{l}
    \rk{A-z\Iu{\cH}}h_{0}=\Phi^\ad J\vphi_{20}^-,\\
    \vphi_{10}^+=\vphi_{20}^--\iu \Phi h_{0},
\end{array}\right.
\eeq
where
\beql{E2.29}
\cH\defeq\cH_1\oplus\cH_2,
\quad A\defeq\Mat{A_1&\iu\Phi_1^\ad J\Phi_2\\0&A_2}\in\ek{\cH},
\quad \Phi\defeq\ek{\Phi_1,\Phi_2}\in\ek{\cH,\cG}.
\eeq
Note that
\[
\frac{1}{\iu}\rk{A-A^\ad}
=\Mat{
    \frac{1}{\iu}\rk{A_1-A_1^\ad}&\Phi_1^\ad J\Phi_2\\
    \Phi_2^\ad J\Phi_1&\frac{1}{\iu}\rk{A_2-A_2^\ad}
},
\]
whence, taking into account the equalities
\[
\frac{1}{\iu}\rk{A_j-A_j^\ad}=\Phi_j^\ad J\Phi_j
\qquad(j=1,2),
\]
we obtain
\[
\frac{1}{\iu}\rk{A-A^\ad}
=\Phi^\ad J\Phi.
\]
Thus, the 5-tuple
\beql{E2.30}
\al
\defeq\hgapj ,
\eeq
where the space $\cH$ and the operators $A$ and $\Phi$ are given by \eqref{E2.29}, is an operator colligation.

Equivalently, we can say that the open system $\cO_\al$ corresponding to the colligation $\al$ of form \eqref{E2.30} is uniquely determined by the open systems $\cO_{\al_1}$ and $\cO_{\al_2}$ corresponding to the colligations of form \eqref{E2.23B} and \eqref{E2.24}, respectively, if condition \eqref{E2.26} is satisfied.

\bdefnl{D2.5}
The colligation $\al$ of form \eqref{E2.30} constructed as described above is called \emph{the product of the colligations $\al_1$ and $\al_2$}, and this will be written by the equality $\al=\al_1\al_2$.
The corresponding system $\cO_{\al}$ is called \emph{the coupling of the open systems $\cO_{\al_1}$ and $\cO_{\al_2}$}, and this will be written by the equality $\cO_{\al}=\cO_{\al_1} \curlyvee\cO_{\al_2}$.
\edefn

Taking into account the described construction, the equality $\cO_{\al}=\cO_{\al_1} \curlyvee\cO_{\al_2}$ can be schematically illustrated in the following way:

\begin{figure}[H]
    \centering
    \begin{tikzpicture}[decoration={
            markings,
            mark=at position 0.5 with {\arrowreversed{latex}}}
        ]
        
        \draw[postaction={decorate}] (0,0) --node[anchor=south] {$\vphi_1^+$} (1,0);
        \draw (2,0) circle (1cm);
        \node[label=center:{$h_1+h_2$}] at (2,0) {};
        \node[label=below:{$\cO_{\al}$}] at (2,-1) {};
        \draw[postaction={decorate}] (3,0) --node[anchor=south] {$\vphi_2^-$} (4,0);
        
        \node[label=center:{$=$}] at (4.5,0) {};
        
        \draw[postaction={decorate}] (5,0) --node[anchor=south] {$\vphi_1^+$} (6,0);
        \draw (6.5,0) circle (0.5cm);
        \node[label=center:{$h_1$}] at (6.5,0) {};
        \node[label=below:{$\cO_{\al_1}$}] at (6.5,-0.5) {};
        
        \draw[postaction={decorate}] (7,0) --node[anchor=south] {$\vphi_1^-=\vphi_2^+$} (9,0);
        
        \draw (9.5,0) circle (0.5cm);
        \node[label=center:{$h_2$}] at (9.5,0) {};
        \node[label=below:{$\cO_{\al_2}$}] at (9.5,-0.5) {};
        \draw[postaction={decorate}] (10,0) --node[anchor=south] {$\vphi_2^-$} (11,0);
    \end{tikzpicture}
    \caption{}\label{F1}
\end{figure}
It is immediately checked that the formula
\[
\rk{\al_1\al_2}\al_3
=\al_1\rk{\al_2\al_3}
\]
holds.

\bthmnl{\zitaa{MR0322542}{\cch{1}}}{T2.6-0623}
Let $\al=\hgapj $ the product of colligations $\al_1=\rk{\cH_1,\cG;A_1,\Phi_1,J}$ and $\al_2=\rk{\cH_2,\cG;A_2,\Phi_2,J}$.
Then $\rho\rk{A_1}\cap\rho\rk{A_2}\subseteq\rho\rk{A}$ and
\begin{multline}\label{E2.37-0623}
    \rk{A-z\IH}^\inv
    =\rk{A_1-z\Iu{\cH_1}}^\inv P_{\cH_1}+\rk{A_2-z\Iu{\cH_2}}^\inv P_{\cH_2}\\
    -\iu\rk{A_1-z\Iu{\cH_1}}^\inv\Phi_1^\ad J\Phi_2\rk{A_2-z\Iu{\cH_2}}^\inv P_{\cH_2},
    \qquad z\in\rho\rk{A_1}\cap\rho\rk{A_2},
\end{multline}
where $P_{\cH_j}$ is the orthoprojector from $\cH$ onto $\cH_j$, $j=1,2$.
\ethm
\bproof
From the second equality in \eqref{E2.29} it follows that
\beql{E2.38-0623}
A-z\IH
=\rk{A_1-z\Iu{\cH_1}}P_{\cH_1}+\rk{A_2-z\Iu{\cH_2}}P_{\cH_2}+\iu\Phi_1^\ad J\Phi_2P_{\cH_2}.
\eeq
Now it is easily checked that for each $z\in\rho\rk{A_1}\cap\rho\rk{A_2}$ the right hand side of identity \eqref{E2.37-0623} is an operator which is both the left and right inverse of operator \eqref{E2.38-0623}.
\eproof

By a factorization of a colligation we mean any of its representations as a product of two colligations.

\bthmnl{\zitaa{MR0182516}{\csubsec{2.2}}}{T2.7}
If a colligation $\al$ of form \eqref{E2.10} admits a factorization $\al=\al_1\al_2$, where $\al_j$ has form \eqref{E2.22}, $j=1,2$, then the factorization
\beql{E2.31}
S_{\al_1 \al_2} \rk{z}=S_{\al_1}\rk{z}S_{\al_2}\rk{z},
\qquad z\in\rho\rk{A_1}\cap\rho\rk{A_2},
\eeq
holds.
\ethm
\bproof
Obviously, if $z\in\rho\rk{A_1}\cap\rho\rk{A_2}$, then $z\in\rho\rk{A}$.
Further, on the one hand, it follows from \eqref{E2.28} that $\vphi_{10}^+=S_\al\rk{z}\vphi_{20}^-$, $z\in\rho\rk{A}$.
On the other hand, from \eqref{E2.25} and \eqref{E2.26} we obtain
\[
\vphi_{10}^+=S_{\al_1}\rk{z}\vphi_{10}^-=S_{\al_1}\rk{z}\vphi_{20}^+=S_{\al_1}\rk{z}S_{\al_2}\rk{z}\vphi_{20}^-,
\qquad z\in\rho\rk{A_1}\cap\rho\rk{A_2}.\qedhere
\]
\eproof

\bthmnl{\zitaa{MR0322542}{\cch{I}}}{T2.8-0715}
Let $\cH_0$ be an invariant subspace of operator $A\in\ek{\cH}$, $A_0\defeq\rstr{A}{\cH_0}$ and $G$ be a domain in the complex plane all points of which are regular with respect to $A$.
If there exists a point $z_0\in G$ which is regular for the operator $A_0$, then all points  of the set $G$ have this property.
\ethm
\bproof
First we note that a point $z\in G$ is regular with respect to operator $A_0$ if and only if $\rk{A-z\IH}^\inv\cH_0\subseteq\cH_0$.

For an arbitrary point $z_1\in G$ we construct a rectifiable curve $L$ in $G$ which connects $z_0$ with $z_1$.
We choose an $\epsilon>0$ and points
\[
z_0=w_0,w_1,w_2,\dotsc,w_n=z_1
\]
on $L$ such that the disks
\[
\ck_j:=\{z: \abs{z-w_j}<\epsilon\},\;j=0,1,\dotsc,n-1,
\]
are contained in $G$ and satisfy the inequalities
\[
\abs{w_{j+1}-w_j}<\epsilon,
\qquad j=0,1,2,\dotsc,n-1.
\]
Since $\rk{A-w_0\IH}^\inv\cH_0\subseteq\cH_0$ and since we have the Taylor expansion
\[
R_z=R_{w_0}+\rk{z-w_0}R_{w_0}^2+\rk{z-w_0}^2R_{w_0}^3+\dotsb,
\qquad z\in\ck_0,
\]
and then $\rk{A-z\IH}^\inv\cH_0\subseteq\cH_0$ for $z\in\ck_0$ and, in particular, $\rk{A-w_1\IH}^\inv\cH_0\subseteq\cH_0$.
Continuing this procedure, we get $\rk{A-w_n\IH}^\inv\cH_0\subseteq\cH_0$.
\eproof

\bthml{T2.9-0715}
Let the colligation $\al=\hgapj $ be the product of the colligations $\al_j=\rk{\cH_j,\cG;A_j,\Phi_j,J}$, $j=1,2$.
If $\rho\rk{A}$ is connected, then
\beql{E2.43-0715}
\rho\rk{A}=\rho\rk{A_1}\cap\rho\rk{A_2}
\eeq
and, hence,
\[
\sigma\rk{A}=\sigma\rk{A_1}\cup\sigma\rk{A_2}.
\]
\ethm
\bproof
From the connectedness of the set $\rho\rk{A}$ and \rthm{T2.8-0715} we get
\beql{E2.44-0715}
\rho\rk{A}\subseteq\rho\rk{A_1}.
\eeq
Analogously, for the operator $A^\ad$ we get the inclusion $\rho\rk{A^\ad}\subseteq\rho\rk{A_2^\ad}$ and hence
\beql{E2.45-0715}
\rho\rk{A}\subseteq\rho\rk{A_2}.
\eeq
From \eqref{E2.44-0715}, \eqref{E2.45-0715} and \rthm{T2.6-0623} equality \eqref{E2.43-0715} follows.
\eproof

\subsection{Operator colligations and their properties}\label{subsec2.3-0604}
\subsubsection{Completely non-selfadjoint operator}
Let $\cH$ be a Hilbert space and $A\in\ek{\cH}$.
The operator $A$ is called \emph{completely non-selfadjoint} if there exist no subspace $\cH_0(\neq\set{0})$ in $\cH$ such that
\begin{enumerate}
    \item[1)] $\cH_0$ reduces $A$;
    \item[2)] $A$ induces a selfadjoint operator on $\cH_0$.
\end{enumerate}

\bthmnl{\zitaa{MR0322542}{\cch{I}, \S1}}{T3.1}
The closure $\cH_1$ of the linear hull of vectors of the form
\beql{E3.1}
A^n({\imA })h,
\qquad(n=0,1,2,\dotsc;h\in\cH)
\eeq
and its orthogonal complement $\cH_0:=\cH\ominus\cH_1$ are invariant with respect to $A$.
The operator $A$ induces a completely non-selfadjoint operator on $\cH_1$ and a selfadjoint operator on $\cH_0$.
\ethm
\bproof
Since $\cH_1$ is obviously invariant relative to $A$, therefore $\cH_0$ is invariant relative to $A^\ad$.
Moreover, since the range of the operator ${\imA }$ lies in $\cH_1$ we have $({\imA })\cH_0=\{0\}$.
Thus, $Ah=A^\ad h$ for $h\in\cH_0$.
Hence, the subspace $\cH_0$ reduces the operator $A$, and $A$ induces a selfadjoint operator on $\cH_0$.

On the other side, if the subspace $\cH_0'$ reduces $A$ and $A$ induces a selfadjoint operator on $\cH_0'$ then obviously $\cH_0'\subseteq\cH_0$.
Hence the operator $A$ induces a completely non-selfadjoint operator on $\cH_1$.
\eproof

\bcorl{C3.2}
The space $\cH$ can be represented in one and only one way in the form of an orthogonal sum of subspaces $\cH_1$ and $\cH_0$ which are invariant relative to $A$ and on which $A$ induces a completely non-selfadjoint operator and a selfadjoint operator, respectively.
\ecor

\subsubsection{Simple and redundant  colligations}
We consider an operator colligation
\beql{E3.2}
\al
\defeq\hgapj 
\eeq
and denote by $\cH_\al$ the closure of the linear hull of all vectors of the form
\beql{E3.3}
A^n\Phi^\ad g,
\qquad n=0,1,2,\dotsc;g\in\cG.
\eeq
The subspaces $\cH_\al$ and $\cH_\al^{(0)}\defeq\cH\ominus\cH_\al$ are called \emph{principal} and \emph{redundant for the colligation $\al$}, respectively.
From \rthm{T3.1} and inclusion $\ran{\imA }\subseteq\ran{\Phi^\ad}$ easily follows that each of the subspaces $\cH_\al$ and $\cH_\al^{(0)}$ is invariant relative to $A$ and $A^\ad$ and that $Ah=A^\ad h$ for $h\in\cH_\al^{(0)}$.

Colligation \eqref{E3.2} is called \emph{simple} if $\cH_\al=\cH$.
Otherwise, it is called \emph{redundant}.
For a colligation to be simple, it is sufficient that its fundamental operator be completely non-selfadjoint.
The converse statement is not correct in general.
Indeed, putting $\cE\defeq\cH$ in \rthm{T2.2}, we find that every bounded linear operator may be embedded in a simple colligation.  

We note that, besides the operator colligation $\al$ of form \eqref{E3.2}, the tuple
\beql{E2.201A}
\al^\ad
\defeq\rk{\cH,\cG;A^\ad,\Phi,-J}
\eeq
is also an operator colligation.
The operator colligation $\al^\ad$ is called the \emph{adjoint colligation with respect to $\al$}.
It is easily checked that the formula
\beql{E2.44-0622}
\rk{\al_1\al_2}^\ad
=\al_2^\ad\al_1^\ad
\eeq
holds.

The principal subspaces of the colligations $\al$ and $\al^\ad$ coincide.
Indeed, since the subspace $\cH_\al$ is invariant with respect to $A^\ad$ and $\ran{\Phi^\ad}\subseteq\cH_\al$, we have
\[
A^{\ad n}\Phi^\ad g\in\cH_\al,
\qquad n=0,1,2,\dotsc,
\qquad g\in\cG,
\]
\tie{}, $\cH_{\al^\ad}\subseteq\cH_\al$.
An analogous argument shows that $\cH_\al\subseteq\cH_{\al^\ad}$.

\blemnl{\zitaa{MR0322542}{\cch{I}}}{L2.9-0622}
Suppose that $\al\defeq\hgapj $ is an operator colligation.
If the subspace $\cH_0\subseteq\cH$ is invariant with respect to $A$ and orthogonal to $\ran{\Phi^\ad}$, then it is contained in the redundant subspace.
\elem
\bproof
For any vector $h\in\cH_0$ the equalities
\[
\ipa{h}{{A^\ad}^n\Phi^\ad g}
=\ipa{A^n h}{\Phi^\ad g}
=0,
\qquad n=0,1,2,\dotsc,
\qquad g\in\cG,
\]
holds, which means that $h\perp\cH_{\al^\ad}$.
Since $\cH_\al=\cH_{\al^\ad}$, we have $h\perp\cH_\al$, \tie, $h\in\cH_\al^{\rk{0}}$.
\eproof

\bcorl{C2.10-0622}
For the colligation $\al\defeq\hgapj $ to be redundant, it is necessary and sufficient that there exists a subspace $\cH_0\subseteq\cH$ $(\cH_0\neq\set{0})$ which is invariant with respect to $A$ and orthogonal to $\ran{\Phi^\ad}$.
\ecor

\bdefnl{D2.15-1031}
Suppose that $\al\defeq\hgapj $ is a redundant colligation.
Denoting by $A_\al$ and $A_\al^{\rk{0}}$ the operators induced by the operator $A$ in the subspaces $\cH_\al$ and $\cH_\al^{\rk{0}}$, respectively.
We obtain the colligations
\[
\al_\pri\defeq\rk{\cH_\al,\cG;A_\al,\Phi,J}
\qquad\text{and}
\qquad \al_\red\defeq\rk{\cH_\al^{\rk{0}},\cG;A_\al^{\rk{0}},0,J}
\]
which are called \emph{the principal} and \emph{redundant parts} of the colligation $\al$, respectively.
\edefn

It is easy to see that $\al_\pri$ is a simple colligation.

Since the channel operator of the colligation $\al_\red$ is the null operator, then the open system $\cO_{\al_\red}$ is closed and
\[
S_{\al_\red}\rk{z}
\equiv\IG, \qquad z\in\ \rho(A_\al^{\rk{0}}).
\]
Obviously,
\[
\cO_\al
=\cO_{\al_\pri}\curlyvee\cO_{\al_\red}
=\cO_{\al_\red}\curlyvee\cO_{\al_\pri}
\]
and
\[
\al
=\al_\pri\cdot\al_\red
=\al_\red\cdot\al_\pri.
\]
From this it follows
\beql{E2.45-0622}
S_\al\rk{z}
=S_{\al_\pri}\rk{z}\cdot S_{\al_\red}\rk{z}
=S_{\al_\red}\rk{z}\cdot S_{\al_\pri}\rk{z}
=S_{\al_\pri}\rk{z}, \ \ z\in\ \rho(A).
\eeq
In this way, the system $\cO_{\al_\red}$ is the maximal closed subsystem of the open system $\cO_\al$.

\subsubsection{Projection of a colligation}\label{subsec2.3.3-0519}
We choose in the inner space $\cH$ of a colligation of form \eqref{E3.2} a subspace $\cH_0$ and define on it the operator $A_0h\defeq P_0Ah$, $h\in\cH_0$, where $P_0$ denotes the orthoprojector from $\cH$ onto $\cH_0$.
Moreover, we define the operator $\Phi_0\defeq\rstr{\Phi}{\cH_0}$ action from $\cH_0$ into $\cG$.
Then for $h\in\cH_0$ we have
\[
\frac{1}{\iu}\rk{A_0-A_0^\ad}h
=\frac{1}{\iu}P_0\rk{A-A^\ad}P_0h
=P_0\Phi^\ad J\Phi P_0h
=\Phi_0^\ad J\Phi_0h,
\]
and thus,
\beql{E3.8}
\al_0
\defeq\rk{\cH_0,\cG;A_0,\Phi_0,J}
\eeq
is an operator colligation.

The colligation $\al_0$ is called the \emph{projection of the colligation $\al$ on the subspace $\cH_0$}, and we write 
\[
\al_0=\proj{\al}{\cH_0}, \ \  \proj{S}{\cH_0}(z):= S_{\proj{\al}{\cH_0}}(z).
\]
We note the relations
\begin{gather}
    \proj{\al^\ad}{\cH_0}=\rk{\proj{\al}{\cH_0}}^\ad\label{E2.47-0622}\\
    \proj{\al}{\cH_1}=\proj{\rk{\proj{\al}{\cH_2}}}{\cH_1},\qquad\cH_1\subseteq\cH_2,\label{E2.48-0622}
\end{gather}
following immediately from the definition of projection.
If $\al_j\defeq\rk{\cH_j,\cG_j;A_j,\Phi_j,J}$, $j=1,2$, are operator colligations and
\[
\al
\defeq\al_1\al_2
=\hgapj ,
\]
then, as formula \eqref{E2.29} shows, $\al_1$ and $\al_2$ are the corresponding projections of the colligation $\al$ on $\cH_1$ and $\cH_2$, respectively, where $\cH_1$ is invariant with respect to $A$.
The converse is also true.

\bthmnl{\zitaa{MR0347396}{\cch{II}}, \zitaa{MR0322542}{\cch{I}}}{T3.6}
Let
\beql{E3.9A}
\al
\defeq\hgapj 
\eeq
be an operator colligation, $\cH_1$ be an invariant subspace with respect to the operator $A$, and $\cH_2\defeq\cH\ominus\cH_1$.
Then $\al=\al_1\al_2$, where $\al_j\defeq\proj{\al}{\cH_j}$, $j=1,2$.
\ethm
\bproof
Let the colligation $\al_j$ of form \eqref{E2.22} be the projection of the colligation $\al$ on the subspace $\cH_j$, $j=1,2$.
If we denote by $P_j$ the orthoprojection onto $\cH_j$, $j=1,2$, we get
\begin{gather*}
    P_2AP_1=0,
    \qquad P_1A^\ad P_2=0,\\
    P_1AP_2
    =2\iu P_1\frac{A-A^\ad}{2\iu}P_2
    =2\iu P_1\Phi^\ad J\Phi P_2
    =2\iu \Phi_1^\ad J\Phi_2P_2,\\
    A
    =\rk{P_1+P_2}A\rk{P_1+P_2}
    =A_1P_1+A_2P_2+2\iu \Phi_1^\ad J\Phi_2P_2,
\end{gather*}
where $\Phi_j\defeq\rstr{\Phi}{\cH_j}$, $j=1,2$.
\eproof

The colligation $\al_0$ will be called \emph{a left} (\emph{right}) \emph{divisor} of the colligation $\al\defeq\hgapj $ if it is the projection of $\al$ on the subspace $\cH_0\subseteq\cH$ which is invariant with respect to $A$ ($A^\ad$).

Let $\al\defeq\hgapj $ be an operator colligation and let
\beql{E2.56-0815}
\set{0}
=\cH_0
\subset\cH_1
\subset\cH_2
\subset\dotsb
\subset\cH_n
=\cH
\eeq
be an increasing chain of subspaces which are invariant with respect to $A$.
This chain $\set{\cH_k}_{k=0}^n$ generates a factorization of the colligation $\al$.
Namely, since the subspace $\cH_{k-1}$ is invariant with respect to the operators $A_k=\rstr{A}{\cH_k}$, $k=2,3,\dotsc,n$, then, in view of formula \eqref{E2.48-0622}, we have
\beql{E2.49-0622}
\al
=\proj{\al}{\cH_1\ominus\cH_0}\cdot\proj{\al}{\cH_2\ominus\cH_1}\dotsm\proj{\al}{\cH_n\ominus\cH_{n-1}}.
\eeq

Let the fundamental operators of the colligations $\proj{\al}{\cH_j\ominus\cH_{j-1}}, \ 
j=1,2,3,\dotsc,n,$ be regular at the point $z$. Then from \eqref{E2.49-0622} it follows
\beql{E2.57-0801}
S_{\al}(z)
=\proj{S}{\cH_1\ominus\cH_0}(z)\cdot\proj{S}{\cH_2\ominus\cH_1}(z)\dots\proj{S}{\cH_n\ominus\cH_{n-1}}(z).
\eeq

\subsubsection{Redundant part of product of operator colligations}

\bthmnl{\zitaa{MR0322542}{\cch{1}}}{T2.15-0622}
If $\al\defeq\hgapj $ is the product of colligations $\al_1\defeq\rk{\cH_1,\cG;A_1,\Phi_1,J}$ and $\al_2\defeq\rk{\cH_2,\cG;A_2,\Phi_2,J}$, then
\beql{E2.50-0622}
\cH_{\al_j}^{\rk{0}}=\cH_\al^{\rk{0}}\cap\cH_j,
\qquad j=1,2.
\eeq
\ethm
\bproof
The subspace $\cH_{\al_1}^{\rk{0}}$ is invariant with respect to $A$ and orthogonal to $\ran{\Phi_1^\ad}$.
In view of the third identity in \eqref{E2.29}, it is orthogonal to $\ran{\Phi^\ad}$.
In view of \rlem{L2.9-0622}, we have $\cH_{\al_1}^{\rk{0}}\subseteq\cH_\al^{\rk{0}}$ and thus, $\cH_{\al_1}^{\rk{0}}\subseteq\cH_\al^{\rk{0}}\cap\cH_1$.

On the other hand, the subspace $\cH_\al^{\rk{0}}\cap\cH_1$ is invariant with respect to $A_1$ and orthogonal to $\ran{\Phi_1^\ad}$.
Applying \rlem{L2.9-0622} to the colligation $\al_1$, we infer that $\cH_\al^{\rk{0}}\cap\cH_1\subseteq\cH_{\al_1}^{\rk{0}}$.

Thus, equality \eqref{E2.50-0622} is proved for $j=1$.
It remains to note that $\al^\ad=\al_2^\ad\al_1^\ad$ and thus, in view of what we have already, $\cH_{\al_2^\ad}^{\rk{0}}=\cH_{\al^\ad}^{\rk{0}}\cap\cH_2$.
Since $\cH_{\al_2^\ad}^{\rk{0}}=\cH_{\al_2}^{\rk{0}}$, $\cH_{\al^\ad}^{\rk{0}}=\cH_{\al}^{\rk{0}}$, then $\cH_{\al_2}^{\rk{0}}=\cH_{\al}^{\rk{0}}\cap\cH_2$.
\eproof

\bthmnl{\zitaa{MR0080269}{\cch{1}}}{T2.13-0622}
If $\al=\al_1\al_2\dotsm\al_n$, then
\beql{E2.51-0622}
\cH_{\al_j}^{\rk{0}}=\cH_\al^{\rk{0}}\cap\cH_j,
\qquad j=1,2,\dotsc,n,
\eeq
where $\cH_j$ is the internal space of the colligation $\al_j$.
\ethm
\bproof
In view of \rthm{T2.15-0622}, we have
\begin{gather*}
    \cH_{\al_j}^{\rk{0}}=\cH_{\al_1\al_2\dotsm\al_j}^{\rk{0}}\cap\cH_j,\\
    \cH_{\al_1\al_2\dotsm\al_j}^{\rk{0}}=\cH_\al^{\rk{0}}\cap\rk{\cH_1\oplus\cH_2\oplus\dotsb\oplus\cH_j}.
\end{gather*}
From this \eqref{E2.51-0622} follows.
\eproof

\bcorl{C2.14-0622}
If $\al=\al_1\al_2\dotsm\al_n$ is a simple colligation, then all colligations $\al_j$ ($j=1,2,\dotsc,n$) are simple.
\ecor

From this and \rthm{T3.6} it follows:

\bcorl{C2.20-0519}
Let $\al\defeq\hgapj$ be a simple operator colligation and let $\cH_1$ be an invariant subspace with respect to the operator $A$.
Then the operator colligation $\al_1\defeq\proj{\al}{\cH_1}$ is simple, too.
\ecor

Note that the converse assumption to \rcor{C2.14-0622} is not true:
the product of simple colligations can be redundant.
More precisely, the following is true.

\bthmnl{\zitaa{MR0322542}{\cch{I}}}{T2.15A-0622}
Let $\cH_0$ be a Hilbert space and $A_0$ be a bounded selfadjoint operator acting in $\cH_0$.
Then there exist simple colligations $\al_1$ and $\al_2$ such that $A_0$ is the fundamental operator of the redundant part of the colligation $\al_1\cdot\al_2$.
\ethm

\subsubsection{Unitarily equivalent colligations}

\bdefnl{D2.3}
We say that a colligation $\al_1\defeq\rk{\cH_1,\cG;A_1,\Phi_1,J}$ is \emph{unitarily equivalent} to a colligation $\al_2\defeq\rk{\cH_2,\cG;A_2,\Phi_2,J}$, if there exists a unitary mapping $U$ of the space $\cH_1$ onto $\cH_2$ such that
\beql{E3.4}
UA_1=A_2U,
\qquad\Phi_1=\Phi_2U.
\eeq
\edefn

Obviously, the relation of unitarily equivalence is reflexive, symmetric and transitive.
It is also obvious that a colligation which is unitarily equivalent to a simple colligation is simple itself.

\breml{R3.2-0630}
It is immediately checked that a unitary operator $U$ which realizes the unitarily equivalence of two simple colligations $\al_1$ and $\al_2$ is determined in unique way. If at the same time $\al_1 = \al_2 = \al$, then obviously $U = I_{\cH}$, where $\cH$ is the internal space $\cH$ of the colligation $\al$.  
\erem

\bthmnl{\zitaa{MR0322542}{\cch{I}}}{T3.3-0630}
Let the colligation $\al_j\defeq\rk{\cH_j,\cG;A_j,\Phi_j,J}$, $j=1,2$, be unitarily equivalent to the colligation $\al_j'\defeq\rk{\cH_j',\cG;A_j',\Phi_j',J}$, $j=1,2$, respectively.
Then the product $\al_1\al_2$ is unitarily equivalent to the product $\al_1'\al_2'$.
\ethm
\bproof
Let the unitary operators 
\[
U_1\in\ek{\cH_1,\cH_1'},
\qquad U_2\in\ek{\cH_2,\cH_2'},
\]
realize the unitary equivalence of the colligations $\al_1,\al_1'$ and $\al_2,\al_2'$, respectively.
It's immediately checked that the unitary operator $U\defeq U_1\oplus U_2$ 
realizes the unitary equivalence of the colligations $\al_1\al_2$ and $\al_1'\al_2'$.
\eproof

Analogously, the following result is verified. The difference is that now the operators $U_1$ and $U_2$ are determined by the operator $U$.

\bthmnl{\zitaa{MR0322542}{\cch{I}}}{T3.4-0630}
Let the colligation $\al$ be unitarily equivalent to the colligation $\al'$ and let $\al=\al_1\al_2$.
Then $\al'=\al_1'\al_2'$, where the colligations $\al_1'$ and $\al_2'$ are unitarily equivalent to $\al_1$ and $\al_2$, respectively.
\ethm


\section{Three basic properties of characteristic operator function}\label{sec3-1223}
In this section we state three basic properties of the \tcof{}
First (\rsubsec{subsec3.1-0606}), we note that the \tcof{}\ is the full unitary invariant of the operator colligation.
Second (see \rthm{T3.7}), it turns out that the invariant subspaces of the corresponding operator are associated with left divisors of the \tcof{}
Namely, this property enables us to construct a triangular model of the corresponding operator using the \tcof{}\ (see \rsec{S1551}).
Third, the $J$\nobreakdash-property of the \tcof{}\ (see \eqref{E3.11}--\eqref{E3.13}) is a basic property which determines the class of \tcof{}\ (see \rsec{S4}).

At the end of this section (\rsubsec{subsec3.4-0606}) we show that the signature of the operator $J$ determines the directions of the outer channels.

\subsection{The characteristic operator function is the full unitary invariant of the operator colligation}\label{subsec3.1-0606}
If the colligations $\al_j\defeq\rk{\cH_j,\cG;A_j,\Phi_j,J}$, $j=1,2$, are unitarily equivalent, then the set $\rho\rk{A_1}$ of all regular points of the operator $A_1$ coincides with the set $\rho\rk{A_2}$ of all regular points of the operator $A_2$ and
\beql{E3.5}
S_{\al_1}\rk{z}=S_{\al_2}\rk{z},
\qquad z\in\rho\rk{A_1}.
\eeq
Indeed, in view of \eqref{E3.4}, we have
\[\begin{split}
    S_{\al_2}\rk{z}
    &=\IG-\iu\Phi_2\rk{A_2-z\Iu{\cH_2}}^\inv\Phi_2^\ad J\\
    &=\IG-\iu\Phi_1U^\inv\ekb{U\rk{A_1-z\Iu{\cH_1}}^\inv U^\inv}U\Phi_1^\ad J\\
    &=\IG-\iu\Phi_1\rk{A_1-z\Iu{\cH_1}}^\inv\Phi_1^\ad J
    =S_{\al_1}\rk{z}.
\end{split}\]

\bthmnl{\zitaa{MR0062955}{\cthm{1}}}{T3.4}
Let
\beql{E3.6}
\al_j
\defeq\rk{\cH_j,\cG;A_j,\Phi_j,J},
\qquad j=1,2,
\eeq
be simple colligations.
If in some neighborhood $G$ of the infinitely distant point the equality $S_{\al_1}\rk{z}=S_{\al_2}\rk{z}$ is satisfied, then $\al_1$ and $\al_2$ are unitarily equivalent.
\ethm
\bproof
In the theorem it is assumed that
\[
\Phi_1\rk{A_1-z\Iu{\cH_1}}^\inv\Phi_1^\ad
=\Phi_2\rk{A_2-z\Iu{\cH_2}}^\inv\Phi_2^\ad,
\qquad z\in G.
\]
Since
\[\begin{split}
    &\rk{A_j-z\Iu{\cH_j}}^\inv-\rk{A_j^\ad-\ko{w}\Iu{\cH_j}}^\inv\\
    &=\rk{A_j^\ad-\ko{w}\Iu{\cH_j}}^\inv\ekb{\rk{A_j^\ad-\ko{w}\Iu{\cH_j}}-\rk{A_j-z\Iu{\cH_j}}}\rk{A_j-z\Iu{\cH_j}}^\inv\\
    &=\rk{z-\ko{w}}\rk{A_j^\ad-\ko{w}\Iu{\cH_j}}^\inv\rk{A_j-z\Iu{\cH_j}}^\inv\\
    &\qquad-2\iu\rk{A_j^\ad-\ko{w}\Iu{\cH_j}}^\inv\Phi_j^\ad J\Phi_j\rk{A_j-z\Iu{\cH_j}}^\inv,
    \qquad j=1,2;\;z,w\in G,
\end{split}\]
we infer
\[\begin{split}
    &\rk{z-\ko{w}}\Phi_1\rk{A_1^\ad-\ko{w}\Iu{\cH_1}}^\inv\rk{A_1-z\Iu{\cH_1}}^\inv\Phi_1^\ad\\
    &=\Phi_1\rk{A_1-z\Iu{\cH_1}}^\inv\Phi_1^\ad-\Phi_1\rk{A_1^\ad-\ko{w}\Iu{\cH_1}}^\inv\Phi_1^\ad\\
    &\qquad+2\iu\Phi_1\rk{A_1^\ad-\ko{w}\Iu{\cH_1}}^\inv\Phi_1^\ad J\Phi_1\rk{A_1-z\Iu{\cH_1}}^\inv\Phi_1^\ad\\
    &=\Phi_2\rk{A_2-z\Iu{\cH_2}}^\inv\Phi_2^\ad-\Phi_2\rk{A_2^\ad-\ko{w}\Iu{\cH_2}}^\inv\Phi_2^\ad\\
    &\qquad+2\iu\Phi_2\rk{A_2^\ad-\ko{w}\Iu{\cH_2}}^\inv\Phi_2^\ad J\Phi_2\rk{A_2-z\Iu{\cH_2}}^\inv\Phi_2^\ad\\
    &=\rk{z-\ko{w}}\Phi_2\rk{A_2^\ad-\ko{w}\Iu{\cH_2}}^\inv\rk{A_2-z\Iu{\cH_2}}^\inv\Phi_2^\ad.
\end{split}\]
Hence,
\begin{multline*}
    \Phi_1\rk{A_1^\ad-\ko{w}\Iu{\cH_1}}^\inv\rk{A_1-z\Iu{\cH_1}}^\inv\Phi_1^\ad\\
    =\Phi_2\rk{A_2^\ad-\ko{w}\Iu{\cH_2}}^\inv\rk{A_2-z\Iu{\cH_2}}^\inv\Phi_2^\ad,
    \qquad z,\ko{w}\in G.
\end{multline*}
From this, using the expansion
\[
\rk{A_j-z\Iu{\cH_j}}^\inv
=-\frac{1}{z}\Iu{\cH_j}-\frac{A_j}{z^2}-\frac{A_j^2}{z^3}-\dotsc,
\qquad\abs{z}>\norm{A_j},j=1,2,
\]
we obtain the equalities
\beql{E3.7} 
\ip{A_1^m\Phi_1^\ad g}{A_1^n\Phi_1^\ad g'}
=\ip{A_2^m\Phi_2^\ad g}{A_2^n\Phi_2^\ad g'},
\qquad m,n=0,1,2,\dotsc;\;g,g'\in\cG.
\eeq
We denote by $\hcH_j$, $j=1,2$, the linear hull of all vectors of the form
\[
A_j^m\Phi_j^\ad g,
\qquad m=0,1,2,\dotsc;\;g\in\cG,
\]
and consider the mapping $U$ from $\hcH_1$ onto $\hcH_2$ which maps each vector of the form $\sum_{n=0}^\ell A_1^n\Phi_1^\ad g_n$ to the vector $\sum_{n=0}^\ell A_2^n\Phi_2^\ad g_n$.
In view of \eqref{E3.7}, the mapping $U$ is isometric.
Since $\hcH_1$ and $\hcH_2$ are dense in $\cH_1$ and $\cH_2$, respectively, $U$ can be continuously extended to an isometry of the whole space $\cH_1$ onto the whole space $\cH_2$.
We again denote by $U$ the obtained unitary operator.
Then the equalities
\[
UA_1^n\Phi_1^\ad
=A_2^n\Phi_2^\ad,
\qquad n=0,1,2,\dotsc
\]
hold.
Thus, $U\Phi_1^\ad=\Phi_2^\ad$ and, moreover,
\[
UA_1A_1^n\Phi_1^\ad
=A_2A_2^n\Phi_2^\ad
=A_2UA_1^n\Phi_1^\ad,
\]
\tie{}, $UA_1=A_2U$.
\eproof

\bcorl{C3.5}
Let colligations $\al_1$ and $\al_2$ of form \eqref{E3.6} be simple.
If in some neighborhood of the infinitely distant point the equality $S_{\al_1}\rk{z}=S_{\al_2}\rk{z}$ holds, then $\rho\rk{A_1}=\rho\rk{A_2}$ and $S_{\al_1}\rk{z}\equiv S_{\al_2}\rk{z}$, $z\in\rho\rk{A_1}$.
\ecor

We note that from the identity $S_\al\rk{z}=S_{\al_\pri}\rk{z}$ (see \eqref{E2.45-0622}) follows that the assumption of simplicity of colligations in \rthm{T3.4} is necessary.

\bthmnl{\zitaa{MR0322542}{\cch{I}}}{T3.6-0630}
Let $\al_1\defeq\rk{\cH_1,\cG;A_1,\Phi_1,J}$ and $\al_1'\defeq\rk{\cH_1',\cG;A_1',\Phi_1',J}$ be divisors of a simple colligation $\al\defeq\hgapj $.
If $S_{\al_1}\rk{z}=S_{\al_1'}\rk{z}$, then $\al_1=\al_1'$.
\ethm
\bproof
Since
\[
\al_1=\proj{\al}{\cH_1},
\qquad \al_1'=\proj{\al}{\cH_1'},
\]
it is sufficient to prove that $\cH_1=\cH_1'$.

We consider the relations
\beql{E3.5-0630}
\al
=\al_1\al_2
=\al_1'\al_2'.
\eeq
In view of \rcor{C2.14-0622}, the colligations $\al_1,\al_2,\al_1',\al_2'$ are simple.
From \rthm{T2.7} it follows that for all $z$ from some neighborhood $G$ of the infinitely distant point the identity
\[
S_\al\rk{z}
=S_{\al_1}\rk{z}S_{\al_2}\rk{z}
=S_{\al_1'}\rk{z}S_{\al_2'}\rk{z}
\]
is satisfied.
In view of
\[
\lim_{z\to\infty}\normb{S_{\al_1}\rk{z}-\IG}=0,
\qquad\lim_{z\to\infty}\normb{S_{\al_1'}\rk{z}-\IG}=0,
\]
one can assume that
\[
S_{\al_1}^\inv\rk{z}=S_{\al_1'}^\inv\rk{z},
\qquad z\in G.
\]
Thus, $S_{\al_2}\rk{z}=S_{\al_2'}\rk{z}$, $z\in G$.
Since the colligations $\al_2$ and $\al_2'$ are simple, then, applying \rthm{T3.4-0630}, from this we get that these colligations are unitarily equivalent.
But then, in view of \rthm{T3.3-0630}, the products $\al_1\al_2$ and $\al_1'\al_2'$ are unitarily equivalent too.
In view of \eqref{E3.5-0630}, the simplicity of the colligation $\al$ and \rrem{R3.2-0630}, we obtain that the unitary operator $U$ which realizes this equivalence is equal to $\IH$.
Thus, $\cH_1'=U\cH_1=\cH_1$.
\eproof

\subsection{To invariant subspaces of the operator there correspond left divisors of its characteristic function}

\bthmnl{\zitaa{MR0347396}{\cch{II}}, \zitaa{MR0322542}{\cch{I}}}{T3.7}
Let $ \al \defeq\hgapj $
be an operator colligation, $\cH_1$ be an invariant subspace with respect to the ope\-rator $A$, $\cH_2\defeq\cH\ominus\cH_1$ and $\al_j\defeq\proj{\al}{\cH_j}$, $j=1,2$. 
Then
\[
\al=\al_1\al_2
\]
and
\beql{E3.10}
S_\al\rk{z}
=S_{\al_1}\rk{z}S_{\al_2}\rk{z},
\qquad z\in\rho\rk{A_1}\cap\rho\rk{A_2},
\eeq
where $A_j$  is the fundamental operator of the colligation $\al_j, \ j=1,2$, \tie{}, to invariant subspaces of the operator $A$ there correspond left divisors of its characteristic operator function.
\ethm

The assertion follows from \rthmss{T3.6}{T2.7}.

A complete answer to the question of connections between invariant subspaces of operator $A$ and left divisors of its \tcof{}\ will be given below in \rthm{T4.14-0711} (see \rsubsec{subsec4.4-0715}).

\breml{R3.8}
An important thing about factorization \eqref{E3.10} is that each of the factors is the characteristic operator function of the corresponding colligation, \tie{}, factorization \eqref{E3.10} is realized in the class of characteristic operator functions.
The connection of invariant subspaces of the operator with factorizations of its characteristic operator function was first observed in \cite{MR0034961} for quasi-unitary operators.
\erem


\subsection{$J$-property of characteristic operator function}
Let $ \al \defeq\hgapj $ be an operator colligation.
Then (\zitaa{MR0062955}{\csec{3}}, \zitaa{MR0347396}{\csubsec{1.3}})
\begin{align}
    S_\al^\ad\rk{z}JS_\al\rk{z}-J&\geq0,&\im z&>0,\quad z\in\rho\rk{A},\label{E3.11}\\
    S_\al^\ad\rk{z}JS_\al\rk{z}-J&=0,&\im z&=0,\quad z\in\rho\rk{A},\label{E3.12}\\
    S_\al^\ad\rk{z}JS_\al\rk{z}-J&\leq0,&\im z&<0,\quad z\in\rho\rk{A}.\label{E3.13}
\end{align}
Indeed, substituting $\vphi^-\rk{t}$, $h\rk{t}$, and $\vphi^+\rk{t}$ of form \eqref{E2.15}--\eqref{E2.16} in equation \eqref{E2.14} and taking into account \eqref{E2.18}, we obtain
\[
\iu\rk{z-\ko z}\ipa{h_0}{h_0}
=\ipa{J\vphi^-_0}{\vphi^-_0}-\ipa{JS_{\alpha}\rk{z}\vphi^-_0}{S_{\alpha}\rk{z}\vphi^-_0},\qquad z\in\rho\rk{A},
\]
\tie,
\[
\ipa{\rk{S_{\alpha}^\ad\rk{z}JS_{\alpha}\rk{z}-J}\vphi^-_0}{\vphi^-_0}
=\frac{z-\ko z}{\iu}\ipa{R_{\alpha}\rk{z}\vphi^-_0}{R_{\alpha}\rk{z}\vphi^-_0},\qquad z\in\rho\rk{A}.
\]
Hence (see \zitaa{MR0347396}{\csubsec{1.3}}),
\beql{E3.14}
S_{\alpha}^\ad\rk{z}JS_{\alpha}\rk{z}-J
=\frac{z-\ko z}{\iu}R_{\alpha}^\ad\rk{z}R_{\alpha}\rk{z},
\qquad z\in\rho\rk{A}.
\eeq
This implies \eqref{E3.11}--\eqref{E3.13}.

Together with colligation $\al$ we consider colligation $\al^\ad$ (see \eqref{E2.201A}).
From \eqref{E2.20} 
we infer
\beql{E3.15}
S_{\al^\ad}\rk{z}=JS_\al^\ad\rk{\ko z}J,
\qquad z\in\rho\rk{A^\ad}.
\eeq
From \eqref{E2.19} and \eqref{E2.201A} we obtain
\beql{E3.16}
R_{\al^\ad}\rk{z}=\rk{A^\ad-z\IG}^\inv\Phi^\ad\rk{-J},
\qquad z\in\rho\rk{A^\ad}.
\eeq
From this and \eqref{E3.14} we find that for $z\in\rho\rk{A}$
\[\begin{split}
    S_\al\rk{z}JS_\al^\ad\rk{z}-J
    &=JS_{\al^\ad}^\ad\rk{\ko z}J\cdot J\cdot JS_{\al^\ad}\rk{\ko z}J-J\\
    &=-J\rkb{S_{\al^\ad}^\ad\rk{\ko z}\rk{-J}S_{\al^\ad}\rk{\ko z}-\rk{-J}}J\\
    &=\frac{z-\ko z}{\iu}JR_{\al^\ad}^\ad\rk{\ko z}R_{\al^\ad}\rk{\ko z}J\\
    &=\frac{z-\ko z}{\iu}\Phi\rk{A-z\IG}^\inv\rk{A^\ad-\ko z\IG}^\inv\Phi^\ad.
\end{split}\]
Hence, analogously to \eqref{E3.11}--\eqref{E3.13} we obtain
\begin{align}
    S_\al\rk{z}JS_\al^\ad\rk{z}-J&\geq0,&\im z&>0,\quad z\in\rho\rk{A},\label{E3.17}\\
    S_\al\rk{z}JS_\al^\ad\rk{z}-J&=0,&\im z&=0,\quad z\in\rho\rk{A},\label{E3.18}\\
    S_\al\rk{z}JS_\al^\ad\rk{z}-J&\leq0,&\im z&<0,\quad z\in\rho\rk{A}.\label{E3.19}
\end{align}
We say that the operator $Y\in\ek{\cG}$ is \emph{$J$\nobreakdash-expansive} (\emph{$J$-contractive}), if
\begin{align}
    Y^\ad JY-J&\geq0\label{E3.20}\\
    (Y^\ad JY-J&\leq0).\label{E3.21}
\end{align}
We say that the operator $Y$ is \emph{two-sided} $J$\nobreakdash-expansive (\emph{two-sided} $J$\nobreakdash-contractive) if the operators $Y$ and $Y^\ad$ are both $J$\nobreakdash-expansive (both $J$\nobreakdash-contractive).
This is equivalent to the fact that the conditions
\begin{align}
    Y^\ad JY-J&\geq0,&YJY^\ad -J&\geq0\label{E3.22}\\
    (Y^\ad JY-J&\leq0,&Y JY^\ad-J&\leq0)\label{E3.23}
\end{align}
are fulfilled.
The operator $Y$ is called \emph{$J$-unitary} in $\cG$, 
if there is equality in both inequalities \eqref{E3.22} (or equivalently in both inequalities \eqref{E3.23}).
Hence, from \eqref{E3.11}--\eqref{E3.13} and \eqref{E3.17}--\eqref{E3.19} it follows that in regular points $z$ of the operator $A$ the characteristic function $S_\al\rk{z}$ of the colligation $\al$
\begin{enumerate}
    \item[(1)] is two-sided $J$-expansive in the upper half plane;
    \item[(2)] is two-sided $J$-contractive in the lower half plane;
    \item[(3)] is $J$-unitary on the real axis.
\end{enumerate}

\breml{R3.9}
There exist $J$-expansive operators which are not two-sided $J$-expansive (see \teg{}\ \cite{MR0094691}).
In \cite{MR0094691} there are given sufficient conditions that a $J$-expansive operator is two-sided $J$-expansive.
In particular this holds if $\dim\cG<+\infty$ (\cite{MR0094691,MR0076882}).
\erem

Since by transition from $J$ to $-J$ the property of $J$-expansivity passes over to $\rk{-J}$-contractivity then all what is said holds also for $J$-contractive operators.
Consequently, in the case $\dim\cG<+\infty$ it is sufficient for the $J$-unitarity of the operator $Y$ that one of the inequalities \eqref{E3.22} and \eqref{E3.23} is satisfied (and this implies that there is equality in all remaining inequalities).


\subsection{Role of the operator $J$}\label{subsec3.4-0606}
Let $ \al \defeq\hgapj $ be an operator colligation of form \eqref{E3.2}.
In this section we assume that $\dim\cG<+\infty$.
In view of \eqref{E2.8} the space $\cG$ can be written as
\beql{E3.24}
\cG
=\cG_1\oplus\cG_{-1},
\eeq
where
\[
Jg
=
\begin{cases}
    g,&g\in\cG_1,\\
    -g,&g\in\cG_{-1}.
\end{cases}
\]
Let $\set{e_j'}_{j=1}^p$, $\set{e_j''}_{j=1}^q$ be orthonormal bases of the spaces $\cG_1$ and $\cG_{-1}$, respectively.
How it was mentioned above (see \rsubsec{subsubsec2.1.2-0715}), the space $\cG$ plays the role of a window in the open system $\cO_\al$.
In physical examples (see \zitaa{MR0182516}{\cch{1}}) the basis vectors $\set{e_j'}_{j=1}^p$ and $\set{e_j''}_{j=1}^q$ play the role of \emph{elementary channels}.
M.~S.~Liv\v{s}ic called the channels $\set{e_j'}_{j=1}^p$ \emph{direct} and the channels $\set{e_j''}_{j=1}^q$ \emph{inverse}, dependent on the sign of the eigenvalue of the operator $J$:
\beql{E3.25}
Je_j'=e_j',\qquad
Je_k''=-e_k'',\qquad
j=1,2,\dotsc,p,\quad
k=1,2,\dotsc,q.
\eeq
In schematic form this can be expressed in the following way:
\begin{figure}[H]
    \centering
    \begin{tikzpicture}
        \draw (2,0) circle (1cm);
        \draw[postaction={decorate, decoration={markings,mark=at position 0.5 with {\arrowreversed{latex}}}}] (0,.5) --node[anchor=south] {$e_j'$} (1.1,.5);
        \draw[postaction={decorate, decoration={markings,mark=at position 0.5 with {\arrowreversed{latex}}}}] (2.9,.5) --node[anchor=south] {$e_j'$} (4,.5);
        \draw[postaction={decorate, decoration={markings,mark=at position 0.5 with {\arrow{latex}}}}] (0,-.5) --node[anchor=north] {$e_k''$} (1.1,-.5);
        \draw[postaction={decorate, decoration={markings,mark=at position 0.5 with {\arrow{latex}}}}] (2.9,-.5) --node[anchor=north] {$e_k''$} (4,-.5);
        \node[label=center:{$\cH$}] at (2,0) {};
        \node[label=below:{$\cO_{\al}$}] at (2,-1) {};
    \end{tikzpicture}
    \caption{}\label{F2}
\end{figure}
In physical examples the sign of the eigenvalue of operator $J$ given in \eqref{E3.25} determines the character of relating wave to the  corresponding channel.
If all channels are direct then $J=\IG$ and, in view of \eqref{E2.9}, the imaginary part of the operator $A$ is non-negative and, thus, the operator $A$ is \emph{dissipative}.

According to the decomposition \eqref{E3.24}, let $P$ and $Q$ be the orthogonal projections from $\cG$ onto $\cG_1$ and $\cG_{-1}$, respectively.
Then
\begin{gather*}
    \IG=P+Q,\qquad
    J=P-Q,\\
    P=\frac{1}{2}\rk{\IG+J},\qquad
    P=\frac{1}{2}\rk{\IG-J}.
\end{gather*}
Let $S_\al\rk{z}$ be the \tcof{}\ of the colligation $\al$, $z_0\in\rho\rk{A}$, $\im z_0<0$ and $S_0\defeq S_\al\rk{z_0}$.
In view of \eqref{E3.13} and \eqref{E3.19}, then
\beql{E3.26}
S_0^\ad JS_0\leq J,\qquad
S_0JS_0^\ad\leq J.
\eeq
As it follows from \rrem{R3.9}, in the considered case the validity of one of the inequalities in \eqref{E3.26} implies the validity of the other one.
Hence, the operator $S_0$ is two-sided $J$-contractive.

In relations (see \eqref{E2.17})
\[
\left\{\begin{array}{l}
    \rk{A-z_0\Iu{\cH}}h_0=\Phi^\ad J\vphi_0^-,\\
    \vphi_0^+=\vphi_0^--\iu \Phi h_0
\end{array}\right.
\]
from the known input $\vphi_0^-=P\vphi_0^-+Q\vphi_0^-$ one can determine the internal state $h_0$ and the output $\vphi_0^+=P\vphi_0^++Q\vphi_0^+$, where (see \eqref{E2.18})
\beql{E3.27}
\vphi_0^+
=S_0\vphi_0^-.
\eeq
In accordance with \rfig{F2} we have
\begin{figure}[H]
    \centering
    \begin{tikzpicture}
        \draw (2,0) circle (1cm);
        \draw[postaction={decorate, decoration={markings,mark=at position 0.5 with {\arrowreversed{latex}}}}] (0,.5) --node[anchor=south] {$P\vphi_0^+$} (1.1,.5);
        \draw[postaction={decorate, decoration={markings,mark=at position 0.5 with {\arrowreversed{latex}}}}] (2.9,.5) --node[anchor=south] {$P\vphi_0^-$} (4,.5);
        \draw[postaction={decorate, decoration={markings,mark=at position 0.5 with {\arrow{latex}}}}] (0,-.5) --node[anchor=south] {$Q\vphi_0^+$} (1.1,-.5);
        \draw[postaction={decorate, decoration={markings,mark=at position 0.5 with {\arrow{latex}}}}] (2.9,-.5) --node[anchor=south] {$Q\vphi_0^-$} (4,-.5);
        \node[label=center:{$\cH$}] at (2,0) {};
        \node[label=below:{$\cO_{\al}$}] at (2,-1) {};
    \end{tikzpicture}
    \caption{}\label{F3}
\end{figure}

Let us consider a new open system with the input vector $\psi_0^-=P\vphi_0^-+Q\vphi_0^+$ (\tie{}, the vectors which ensure the flow of information inside the system in \rfig{F3}) whereas the output vector is given by  $\psi_0^+\defeq P\vphi_0^++Q\vphi_0^-$.
Actually, the transition to the new open system mean that we change the direction of the inverse channels.

The transfer operator $W_0$ (the value of the transfer function at the point $z_0$) of the new open system satisfies the condition
\beql{E3.28}
\psi_0^+
=W_0\psi_0^-.
\eeq
We express $W_0$ by $S_0$.
From \eqref{E3.27} we get
\begin{align*}
    P\vphi_0^++Q\vphi_0^+&=S_0P\vphi_0^-+S_0Q\vphi_0^-\\
    P\vphi_0^+-S_0Q\vphi_0^-&=S_0P\vphi_0^--Q\vphi_0^+.
\end{align*}
Thus,
\beql{E3.29}
\rk{P-S_0Q}\psi_0^+
=\rk{S_0P-Q}\psi_0^-.
\eeq

\begin{lem}\label{L3.10}
    If the operator $S_0$ is $J$-contractive, then operator $P-S_0Q$ is invertible.
\end{lem}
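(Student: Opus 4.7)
The plan is to prove injectivity of $P-S_0Q$; since $\dim\cG<+\infty$ in this subsection, injectivity will automatically yield invertibility. The key structural inputs will be the orthogonal eigenspace decomposition $\cG=\cG_1\oplus\cG_{-1}$ of the $J$-involution (see \eqref{E3.24}) and the $J$-contractivity hypothesis $S_0^\ad JS_0-J\leq0$.

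First I would take $g\in\cG$ with $\rk{P-S_0Q}g=0$, i.e., $Pg=S_0Qg$, and introduce the abbreviations $u\defeq Pg\in\cG_1$ and $v\defeq Qg\in\cG_{-1}$. The kernel relation then reads $u=S_0v$, and it suffices to show $u=v=0$, because $g=u+v$.

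Next I would apply $J$-contractivity to the vector $v$: the inequality $\ipa{S_0^\ad JS_0v}{v}\leq\ipa{Jv}{v}$ rewrites as $\ipa{Ju}{u}\leq\ipa{Jv}{v}$. On $\cG_1$ the operator $J$ acts as $+\IG$ and on $\cG_{-1}$ as $-\IG$, so $\ipa{Ju}{u}=\norm{u}^2$ and $\ipa{Jv}{v}=-\norm{v}^2$. The inequality then becomes $\norm{u}^2+\norm{v}^2\leq0$, forcing $u=v=0$ and hence $g=0$. By finite-dimensionality of $\cG$, this establishes invertibility.

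There is essentially no serious obstacle here; the only point that deserves care is the orthogonality $\cG_1\perp\cG_{-1}$, which follows automatically from $J=J^\ad=J^\inv$ and legitimates evaluating the $J$-form on the components $u$ and $v$ separately. If one wished to drop the finite-dimensional assumption, surjectivity could be obtained by running the same argument on the adjoint $\rk{P-S_0Q}^\ad=P-QS_0^\ad$, using that the orthogonal splitting of $\cG$ forces $Pg'=0$ and $QS_0^\ad g'=0$ simultaneously whenever $\rk{P-QS_0^\ad}g'=0$, and then invoking two-sided $J$-contractivity as in \rrem{R3.9}.
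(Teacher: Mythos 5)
Your proof is correct, and it takes a genuinely different route from the paper's. You work directly with the kernel of $P-S_0Q$: from $\rk{P-S_0Q}g=0$ you get $Pg=S_0Qg$, and testing the hypothesis $S_0^\ad JS_0\leq J$ against the vector $Qg$ yields $\norm{Pg}^2\leq-\norm{Qg}^2$, hence $g=0$; finite-dimensionality of $\cG$ then upgrades injectivity to invertibility. The paper instead argues by contradiction on the adjoint $P-QS_0^\ad$, using the \emph{second} inequality in \eqref{E3.26}, namely $S_0JS_0^\ad\leq J$ rewritten as $S_0PS_0^\ad+Q\leq P+S_0QS_0^\ad$, together with the observation that $\rk{P-QS_0^\ad}g=0$ splits into $Pg=0$ and $QS_0^\ad g=0$ by orthogonality of $\cG_1$ and $\cG_{-1}$. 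Your argument is shorter and has the advantage of using only the one-sided $J$-contractivity literally stated in the hypothesis, whereas the paper's proof tacitly relies on the two-sided property (which for $\dim\cG<\infty$ is guaranteed by \rrem{R3.9}, and which the paper has already established for $S_0=S_\al\rk{z_0}$ via \eqref{E3.19}). One small caveat on your closing aside: in infinite dimensions, injectivity of $P-S_0Q$ together with injectivity of its adjoint gives only dense range, not bounded invertibility, so dropping the finite-dimensionality assumption would require an additional lower bound to close the range; but this does not affect the lemma as stated, since $\dim\cG<+\infty$ is assumed throughout \rsubsec{subsec3.4-0606}.
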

\begin{proof}
    We rewrite the second inequality in \eqref{E3.26} in the form
    \[
    S_0\rk{P-Q}S_0^\ad
    \leq P-Q,
    \]
    \tie{},
    \beql{E3.30}
    S_0PS_0^\ad+Q
    \leq P+S_0QS_0^\ad.
    \eeq
    If the operator $P-S_0Q$ is not invertible, then also the operator $P-QS_0^\ad=\rk{P-S_0Q}^\ad$ is not invertible.
    Thus, there exists a vector $g\in\cG\setminus\set{0}$ such that
    \[
    \rk{P-QS_0^\ad}g
    =0
    \]
    This is equivalent to
    \beql{E3.31}
    Pg=0,\qquad
    QS_0^\ad g=0.
    \eeq
    But then
    \[
    \rk{P+S_0QS_0^\ad}g
    =0
    \]
    and, in view of \eqref{E3.30}, we get
    \[
    S_0PS_0^\ad g=0,\qquad
    Qg=0.
    \]
    From this and \eqref{E3.31} it follows that
    \[
    g
    =Pg+Qg
    =0.
    \]
    This contradiction completes the proof.
\end{proof}

Taking into account the invertibility of the operator $P-S_0Q$ we rewrite \eqref{E3.29} in the form
\[
\psi_0^+
=\rk{P-S_0Q}^\inv\rk{S_0P-Q}\psi_0^-.
\]
Comparing with \eqref{E3.28}, we obtain
\beql{E3.32}
W_0
=\rk{P-S_0Q}^\inv\rk{S_0P-Q}.
\eeq
Important is the fact that it follows from the $J$-contractivity of the operator $S_0$ that $W_0$ is contractive.
Indeed,
\[\begin{split}
    &I-W_0W_0^\ad\\
    &=\rk{P-S_0Q}^\inv\ekb{\rk{P-S_0Q}\rk{P-QS_0^\ad}-\rk{S_0P-Q}\rk{PS_0^\ad-Q}}\rk{P-S_0Q}^\invad\\
    &=\rk{P-S_0Q}^\inv\rk{P+S_0QS_0^\ad-S_0PS_0^\ad-Q}\rk{P-S_0Q}^\invad\\
    &=\rk{P-S_0Q}^\inv\rk{J-S_0JS_0^\ad}\rk{P-S_0Q}^\invad
    \geq0.
\end{split}\]
The transformation \eqref{E3.32} which maps $J$-contractive operators to contractions is called \emph{Potapov--Ginzburg transform}.
It plays an important role in the theory of $J$-contractive operator functions.
In terms of open systems this transform corresponds to the change of directions in inverse channels of the system to the opposite directions.
More detailed information on the Potapov--Ginzburg transform can be found in the monograph \zitaa{MR2474532}{\cch{2}}.

\subsection{Example.
    Characteristic function of the Leibniz--Newton integral}\label{subsec3.5-0912}

In the space $L_2\rk{0,\ell}$ we consider  the operator
\beql{E3.29-0801}
\rk{\cI f}\rk{x}
\defeq\iu\int_x^\ell f\rk{t}\dif t.
\eeq
We note that
\beql{E3.30-0801}
\rk{\cI ^\ad f}\rk{x}
=-\iu\int_0^x f\rk{t}\dif t.
\eeq
From this it follows
\beql{E3.31-0801}
\rkb{\frac{\cI -\cI ^\ad}{\iu}f}\rk{x}
=\int_0^\ell f\rk{t}\dif t
=\ell\ipa{f}{h_0}h_0\rk{x},
\eeq
where
\beql{E3.32-0801}
h_0\rk{x}\defeq\frac{1}{\sqrt{\ell}},
\qquad x\in\ek{0,\ell};
\qquad \norm{h_0}=1.
\eeq
Hence, the operator $\cI $ has a one-dimensional imaginary part and
\beql{E3.33-0801}
\ipab{\frac{\cI -\cI ^\ad}{\iu}f}{f}
=\ell\absb{\ipa{f}{h_0}}^2
=\absb{\int_0^\ell f\rk{t}\dif t}
\geq0,
\eeq
\ednote{$\absb{\int_0^\ell f\rk{t}\dif t}$ oder $\absb{\int_0^\ell f\rk{t}\dif t}^2$ (Quadrat)?}\tie{}, the operator $\cI $ is dissipative.

Consider the operator $\Phi\in\ek{L_2\rk{0,\ell},\C}$ which is given by
\beql{E3.34-0514}
\Phi f
\defeq\sqrt{\ell}\ipa{f}{h_0}\tau.
\eeq
Here $\tau\in\C$ and $\tau=1$.
We note that
\[
\ipa{f}{\Phi^\ad \tau}
=\ipa{\Phi f}{\tau}
=\sqrt{\ell}\ipa{f}{h_0},
\]
\tie{}, $\Phi^\ad \tau=\sqrt{\ell}h_0$.
This enables us to rewrite \eqref{E3.33-0801}\ednote{\eqref{E3.33-0801} or \eqref{E3.31-0801}?} in the following way:
\[
\frac{\cI -\cI ^\ad}{\iu}
=\Phi^\ad J\Phi,
\]
where $J\defeq\IC $.
Thus, 
we see that
\beql{E3.34-0429}
\hat{\al}
\defeq\rkb{L^2\rk{0,\ell},\C;\cI ,\Phi,\IC}
\eeq
\ednote{$L^2\rk{0,\ell}$ or $L_2\rk{0,\ell}$?}is an operator colligation.
We note that
\[
\rk{\cI^nh_0}\rk{x}
=\frac{\iu^n}{\sqrt{\ell}}\rk{\ell-x}^n,
\qquad n=0,1,2,\dotsc.
\]
From this, turning our attention to the completeness of the system of polynomials in $L_2(0,\ell)$ and \rthm{T3.1}, \ednote{``we see'' einf\"ugen?}that the operator $\cI$ is complete nonselfadjoint and, consequently, the colligation $\hat{\al}$ is simple.

In the given case, $\cH=L_2(0,\ell)$ and
\beql{E3.34-0801}\begin{split}
    S_{\hat{\al}}\rk{z}
    &=\IC-\iu\Phi\rk{\cI -z\IH}^\inv\Phi^\ad\IC\\
    &=1-\iu\ipab{\Phi\rk{\cI -z\IH}^\inv\Phi^\ad \tau}{\tau}
    =1-\iu\ell\ipab{\rk{\cI -z\IH}^\inv h_0}{h_0}.
\end{split}\eeq

For arbitrary $z\neq0$, we have
\begin{multline*}
    \sum_{n=0}^\infty\frac{1}{z^{n+1}}\rk{\cI ^nf}\rk{x}
    =\frac{1}{z}f\rk{x}+\frac{\iu}{z^2}\biggl\{\int_x^\ell f\rk{t}\dif t+\frac{\iu}{z}\int_x^\ell\rk{t-x}f\rk{t}\dif t\\
    +\dotsb+\rkb{\frac{\iu}{z}}^{n-1}\int_x^\ell\frac{\rk{t-x}^{n-1}}{\rk{n-1}!}f\rk{t}\dif t+\dotsb\biggr\}.
\end{multline*}
Using the dominated convergence theorem for the term-by-term integration of the series, we obtain
\[
\rkb{\rk{\cI -z\IH}^\inv f}\rk{x}
=-\frac{1}{z}f\rk{x}-\frac{\iu}{z^2}\int_x^\ell \ec^{\frac{\iu}{z}\rk{t-x}}f\rk{t}\dif t.
\]
From this it follows $\sigma\rk{\cI }=\set{0}$.
Further, from \eqref{E3.34-0801} we get
\beql{E3.36-0505}\begin{split}
    S_{\hat{\al}}\rk{z}
    &=1-\iu\ell\ipab{-\frac{1}{z}h_0\rk{x}-\frac{\iu}{z^2}\int_x^\ell\ec^{\frac{\iu}{z}\rk{t-x}}h_0\rk{t}\dif t}{h_0\rk{x}}\\
    &=1-\iu\int_0^\ell\rkb{-\frac{1}{z}-\frac{\iu}{z^2}\int_x^\ell\ec^{\frac{\iu}{z}\rk{t-x}}\dif t}\dif x
    =\ec^{\iu\frac{\ell}{z}}.
\end{split}\eeq

In the given case, \tJ{properties} \eqref{E3.11}--\eqref{E3.13} (\tresp{}\ \eqref{E3.17}--\eqref{E3.19}) have the form:
\begin{align*}
    \absb{S\rk{z}}&>1,\quad\im z>0,\\
    \absb{S\rk{z}}&=1,\quad\im z=0,\\
    \absb{S\rk{z}}&<1,\quad\im z<0.
\end{align*}

\section{Some classes of characteristic functions.
    Divisors of characteristic operator functions}\label{S4}

\subsection{Linear fractional transformation of \tcof{}}\label{subsec4.1}
Starting from the colligation $\al\defeq\hgapj $, we form the operator function (see, \teg, \zitaa{MR0322542}{\cch{I}})
\beql{E4.1}
V_\al\rk{z}
\defeq\frac{1}{2}\Phi\rk{\reA  -z\IH}^\inv\Phi^\ad.
\eeq
The function $V_\al\rk{z}$ is holomorphic on the set $\rho(\reA  )$ of regular points of the operator $\reA  $, and its values, as the values of the function $S_\al\rk{z}$, are operators acting in $\cG$.
We note that $\rho(\reA  )$ contains all non-real points.
From the identity
\beql{E4.2}
V_\al\rk{z}-V_\al^\ad\rk{z}
=\iu\im z\Phi\rk{\reA  -\ko{z}\IH}^\inv\rk{\reA  -z\IH}^\inv\Phi^\ad
\eeq
it follows that
\begin{gather}
    \frac{V_\al\rk{z}-V_\al^\ad\rk{z}}{2\iu}\geq0,\qquad\im z>0,\label{E4.3}\\
    V_\al\rk{z}=V_\al^\ad\rk{z},\qquad\im z=0,\;z\in \rho(\reA  ),\label{E4.4}\\
    \frac{V_\al\rk{z}-V_\al^\ad\rk{z}}{2\iu}\leq0,\qquad\im z<0.\label{E4.5}
\end{gather}
\bthmnl{\cite{MR0062955}, \cite{MR0100793}, \zitaa{MR0322542}{\cch{I}}}{T4.1}
At each point of the set $\rho(A)\cap \rho(\reA  )$ there exist the operators
\[
\rkb{S_\al\rk{z}+\IG}^\inv,
\qquad\rkb{\IG+\iu V_\al\rk{z}J}^\inv,
\]
and the following identities
\begin{align}
    V_\al\rk{z}
    &=\iu\rkb{S_\al\rk{z}-\IG}\rkb{S_\al\rk{z}+\IG}^\inv J\notag\\
    &=\iu\rkb{S_\al\rk{z}+\IG}^\inv\rkb{S_\al\rk{z}-\IG}J,\label{E4.6}\\
    S_\al\rk{z}
    &=\rkb{\IG-\iu V_\al\rk{z}J}\rkb{\IG+\iu V_\al\rk{z}J}^\inv\notag\\
    &=\rkb{\IG+\iu V_\al\rk{z}J}^\inv\rkb{\IG-\iu V_\al\rk{z}J}\label{E4.7}
\end{align}
hold.
\ethm
\bproof
Indeed, in view of
\beql{E4.8}
\rk{\reA  -z\IH}^\inv-\rk{A-z\IH}^\inv
=\iu\rk{A-z\IH}^\inv \imA  \rk{\reA  -z\IH}^\inv,
\eeq
and, taking into account \eqref{E2.9}, we get
\begin{multline*}
    \Phi\rk{\reA  -z\IH}^\inv\Phi^\ad-\Phi\rk{A-z\IH}^\inv\Phi^\ad\\
    =\frac{1}{2}\iu\Phi\rk{A-z\IH}^\inv\Phi^\ad J\Phi\rk{\reA  -z\IH}^\inv\Phi^\ad.
\end{multline*}
Then
\[
V_\al\rk{z}+\frac{\iu}{2}\rkb{\IG-S_\al\rk{z}}J
=\frac{1}{2}\rkb{\IG-S_\al\rk{z}}V_\al\rk{z}.
\]
From this we infer
\beql{E4.9}
\rkb{S_\al\rk{z}+\IG}\rkb{\IG+\iu V_\al\rk{z}J}
=2\IG.
\eeq
Analogously, from the relation
\beql{E4.10}
\rk{\reA  -z\IH}^\inv-\rk{A-z\IH}^\inv
=\iu\rk{\reA-z\IH}^\inv\imA  \rk{A-z\IH}^\inv
\eeq
we obtain
\beql{E4.11}
\rkb{\IG+\iu V_\al\rk{z}J}\rkb{S_\al\rk{z}+\IG}
=2\IG.
\eeq
In view of \eqref{E4.9} and \eqref{E4.11}, each of the operators $S_\al\rk{z}+\IG$ and $\IG+\iu V_\al\rk{z}J$ has a bounded inverse for $z\in \rho(A)\cap \rho(\reA  )$.
Now formulas \eqref{E4.6} and \eqref{E4.7} follow from \eqref{E4.9}.
\eproof

We note that from formula \eqref{E4.6} it follows
\beql{E4.12-0711}\begin{split}
    \frac{V_\al\rk{z}-V_\al^\ad\rk{z}}{2\iu}
    &=\frac{1}{2}J\rkb{S_\al^\ad\rk{z}+\IG}^\inv\Bigl\{\rkb{S_\al^\ad\rk{z}+\IG}J\rkb{S_\al\rk{z}-\IG}\\
    &\qquad+\rkb{S_\al^\ad\rk{z}-\IG}J\rkb{S_\al\rk{z}+\IG}\Bigr\}\rkb{S_\al\rk{z}+\IG}^\inv J\\
    &=J\rkb{S_\al^\ad\rk{z}+\IG}^\inv\gkb{S_\al^\ad\rk{z}JS_\al\rk{z}-J}\rkb{S_\al\rk{z}+\IG}^\inv J.
\end{split}\eeq
Analogously, from \eqref{E4.7} we find
\beql{E4.13-0711}\begin{split}
    &S_\al^\ad\rk{z}JS_\al\rk{z}-J\\
    &=\rkb{\IG-\iu J V_\al^\ad\rk{z}}^\inv\Bigl\{\rkb{\IG+\iu JV_\al^\ad\rk{z}}J\rkb{\IG-\iu V_\al\rk{z}J}\\
    &\qquad\qquad\qquad-\rkb{\IG-\iu JV_\al^\ad\rk{z}}J\rkb{\IG+\iu JV_\al\rk{z}}\Bigr\}\rkb{\IG+\iu V_\al\rk{z}J}^\inv\\
    &=2\rkb{\IG-\iu J V_\al^\ad\rk{z}}^\inv J\frac{V_\al\rk{z}-V_\al^\ad\rk{z}}{\iu}J\rkb{\IG+\iu V_\al\rk{z}J}^\inv.
\end{split}\eeq
Hence, for $z\in\rho\rk{A}\cap\rho\rk{\reA  }$ conditions \eqref{E4.3}, \eqref{E4.4}, \eqref{E4.5} are equivalent to conditions \eqref{E3.11}, \eqref{E3.12}, \eqref{E3.13}, respectively.

\subsection{Class $\Om_J $}\label{subsec4.2-0713}
Let the linear operator $J$ which acts in the Hilbert space $\cG$ satisfy the conditions $J=J^\ad$, $J^2=\IG$.

\bdefnnl{\zitaa{MR0322542}{\cch{I}}}{D4.2-0629}
We say that the function $S(z)$ of the complex variable with values in $\ek{\cG}$ belongs to the class $\Om_J $ if it has the following properties:
\begin{Aeqi}{0}
    \il{D4.2-0629.1} $S(z)$ is holomorphic in some neighborhood $G_S$ of the infinitely distant point.
    \il{D4.2-0629.2} $\lim_{z\to\infty}\norm{S\rk{z}-\IG}=0$
    \il{D4.2-0629.3} For all $z\in G_S$ the operator $S\rk{z}+\IG$ has a bounded inverse.
    Moreover,
    \beql{E4.12-0629}
    V\rk{z}
    \defeq\iu\rkb{S\rk{z}-\IG}\rkb{S\rk{z}+\IG}^\inv J
    =\iu\rkb{S\rk{z}+\IG}^\inv\rkb{S\rk{z}-\IG}J
    \eeq
    is analytically continuable to the domain $G_V$ which is the extended complex plane with the exception of some bounded set of real points.
    \il{D4.2-0629.4}
    \beql{E4.13-0629}
    \frac{V\rk{z}-V^\ad\rk{z}}{2\iu}\geq0,\qquad\im z>0.
    \eeq
    \il{D4.2-0629.5}
    \beql{E4.14-0629}
    V\rk{z}=V^\ad\rk{z},\qquad\im z=0,\;z\in G_V.
    \eeq
\end{Aeqi}
\edefn

In view of \eqref{E4.12-0629}, we have
\beql{E4.15-0629}
\rkb{S\rk{z}+\IG}\rkb{\IG+\iu V\rk{z}J}
=\rkb{\IG+\iu V\rk{z}J}\rkb{S\rk{z}+\IG}
=2\IG.
\eeq
Thus, at each point $z\in G_S$ the operator
\[
\IG+\iu V\rk{z}J
\]
has a bounded inverse and
\beql{E4.16-0629}
S\rk{z}
=\rkb{\IG-\iu V\rk{z}J}\rkb{\IG+\iu V\rk{z}J}^\inv
=\rkb{\IG+\iu V\rk{z}J}^\inv\rkb{\IG-\iu V\rk{z}J}.
\eeq
From formulas of \rsubsec{subsec4.1} it follows that the \tcof{}\ $S_\al\rk{z}$ of an arbitrary colligation $\al\defeq\hgapj $ belongs to the class $\Om_J $.

\bthmnl{\zitaa{MR0322542}{\cthm{5.1}}}{T4.3-0629}
If the operator function $S(z)$ belongs to the class $\Om_J $, then  there exists a simple colligation $\al$ with the directing operator $J$ such that on some neighborhood of the infinitely distant point $S_\al\rk{z}=S\rk{z}$ holds.
\ethm
\bproof
We sketch the basic steps of the proof.
A detailed proof can be found, \teg{}, in \zitaa{MR0322542}{\cch{I}}.

From conditions~\eqref{D4.2-0629.1}--\eqref{D4.2-0629.5} of \rdefn{D4.2-0629} it follows that the function $V\rk{z}$ admits the integral representation
\beql{E4.17-0629}
V\rk{z}=\int_a^b\frac{\dif F\rk{t}}{t-z},
\qquad-\infty<a<b<+\infty,\;z\notin[a,b],
\eeq
where $F\rk{t}$ ($a\leq t\leq b$) is an operator-valued function with values in $\ek{\cG}$ such that
\[
F\rk{t}=F^\ad\rk{t},
\qquad F\rk{t_1}\leq F\rk{t_2},
\qquad t,t_1,t_2\in[a,b],\;t_1<t_2.
\]
Obviously, one can assume that $F\rk{a}=0$.

In view of M.~A.~Naimark (see, \teg{}, \zitaa{MR0044034}{\cch{IX}}, \zitaa{MR0322542}{\capp{1}}),
\beql{E4.18-0629}
F\rk{t}=\Psi E\rk{t}\Psi^\ad,
\qquad t\in[a,b],
\eeq
where $E\rk{t}$ is an orthogonal partition of unity in some Hilbert space $\cH$ and $\Psi\in\ek{\cH,\cG}$.
We define on $\cH$ the operators
\beql{E4.19-0629}
A\defeq\int_a^bt\dif E\rk{t}+\iu\Phi^\ad J\Phi,
\qquad\Phi\defeq\sqrt2\Psi
\eeq
and consider the colligation $\tilde\al\defeq\hgapj $.
From \eqref{E4.17-0629}--\eqref{E4.19-0629} and \eqref{E4.1} we obtain
\[
V\rk{z}
=\frac{1}{2}\Phi\int_a^b\frac{\dif E\rk{t}}{t-z}\Phi^\ad
=\frac{1}{2}\Phi\rk{\reA  -z\EM}^\inv\Phi^\ad
=V_{\tilde\al}\rk{z},\qquad z\notin[a,b].
\]
For this reason, in view of \eqref{E4.16-0629} and \eqref{E4.7}, there exists a neighborhood of the infinite distant point such that
\[\begin{split}
    S\rk{z}
    &=\rkb{\IG-\iu V\rk{z}J}\rkb{\IG+\iu V\rk{z}J}^\inv\\
    &=\rkb{\IG-\iu V_{\tilde\al}\rk{z}J}\rkb{\IG+\iu V_{\tilde\al}\rk{z}J}^\inv
    =S_{\tilde\al}\rk{z}.
\end{split}\]
Now it is sufficient to set $\al\defeq\tilde\al_\pri$ and take \eqref{E2.45-0622} into account.
\eproof

\bcorl{C4.4-0629}
If the function $S\rk{z}$ belongs to the class $\Om_J $, then in some neighborhood of the infinitely distant point it satisfies the conditions \eqref{E3.11}--\eqref{E3.13} and \eqref{E3.17}--\eqref{E3.19}.
\ecor

\bcorl{C4.5-0629}
There exists a neighborhood of the infinitely distant point such that the function $S\rk{z}\in\Om_J $ can be represented on it in the form
\beql{E4.20-0629}
S\rk{z}=\IG+\frac{\iu}{z}HJ+\dotsb,
\eeq
where $H\geq0$.
If $H=0$, then $S\rk{z}\equiv\IG$.
\ecor

\bcorl{C4.6-0629}
The product of two operator functions from class $\Om_J $ is also contained in $\Om_J $.
\ecor

The assertion follows from \rthm{T2.7}.

\bcorl{C4.7-0629}
If $S_j\rk{z}\in\Om_J $, $j=1,2$, and $S_1\rk{z}S_2\rk{z}=\IG$, $z\in G_{S_1}\cap G_{S_2}$, then $S_1\rk{z}=S_2\rk{z}=\IG$.
\ecor

The assertion follows from \rcor{C4.5-0629}.

\subsection{Divisors and regular divisors of functions of class $\Om_J $}\label{subsec4.3-0713}

\bdefnnl{\zitaa{MR0322542}{\cch{I}}}{D4.8-0707}
We say that the function $S_1\rk{z}\in\Om_J $ is a \emph{left} (\emph{right}) \emph{divisor} of a function $S_2\rk{z}\in\Om_J $ if there exists such function $S_{12}\rk{z}\in\Om_J $ ($S_{21}\rk{z}\in\Om_J $) such that on some neighborhood of the infinitely distant point the identity
\[
S_2\rk{z}=S_1\rk{z}S_{12}\rk{z}
\qquad(S_2\rk{z}=S_{21}\rk{z}S_1\rk{z})
\]
is satisfied.
The symbol $S_1\rk{z}\prec S_2\rk{z}$ means that $S_1\rk{z}$ is a left divisor of the function $S_2\rk{z}$.
\edefn

The relation $\prec$ generates a partial ordering in $\Om_J $.
Indeed, if $S_1\rk{z}\prec S_2\rk{z}$ and $S_2\rk{z}\prec S_1\rk{z}$ then
\begin{align*}
    S_2\rk{z}&=S_1\rk{z}S_{12}\rk{z},\qquad S_{12}\rk{z}\in\Om_J ,\\
    S_1\rk{z}&=S_{2}\rk{z}S_{21}\rk{z},\qquad S_{21}\rk{z}\in\Om_J ,
\end{align*}
and thus, $S_{1}\rk{z}=S_{1}\rk{z}S_{12}\rk{z}S_{21}\rk{z}$.
Hence, $S_{12}\rk{z}S_{21}\rk{z}=\IG$ which, in view of \rcor{C4.7-0629}, implies $S_{12}\rk{z}=S_{21}\rk{z}=\IG$ and $S_{1}\rk{z}=S_{2}\rk{z}$.
Moreover, it is immediately checked that the relation $\prec$ is transitive.

\bdefnnl{\zitaa{MR0322542}{\cch{I}}}{D4.9-0707}
Let $S_{1}\rk{z}\in\Om_J $ be a left divisor of the function $S_{2}\rk{z}\in\Om_J $:
\[
S_{2}\rk{z}=S_{1}\rk{z}S_{12}\rk{z},
\qquad S_{12}\rk{z}\in\Om_J .
\]
Then the function $S_{1}\rk{z}$ is called a \emph{regular left divisor} of the function $S_{2}\rk{z}$ if the product of simple colligations $\al_{1}\defeq\rk{\cH_{1},\cG;A_{1},\Phi_{1},J}$ and $\al_{12}\defeq\rk{\cH_{12},\cG;A_{12},\Phi_{12},J}$ for which $S_{\al_{1}}\rk{z}=S_{1}\rk{z}$ and $S_{\al_{12}}\rk{z}=S_{2}\rk{z}$ is a simple colligation.
\edefn

From \rthmss{T3.3-0630}{T3.4} it follows that this definition does not depend on the choice of the simple colligations $\al_{1}$ and $\al_{12}$.

Analogously, \emph{regular right divisors} are defined.
We will write $S_1\rk{z}\llcurly S_2\rk{z}$ if $S_1\rk{z}$ is a regular left divisor of $S_2\rk{z}$.

\bthmnl{\zitaa{MR0322542}{\cch{I}}}{T4.10-0707}
The relation $\llcurly$ is a partial ordering in the set $\Om_J $.
\ethm

The set of all left divisors of a given function $S\rk{z}\in\Om_J $ is in general larger than the set of all regular left divisors.
We give an example (see \zitaa{MR0322542}{\cch{I}}).
Let $\cH$ be a one-dimensional and $\cG$ be a two-dimensional space, $h\in\cH$ ($\norm{h}=1$), let $\set{g_1,g_2}$ be an orthonormal basis in $\cG$.
We introduce the operators $\Phi$ and $J$ by setting $\Phi h\defeq g_1+g_2$, $Jg_1\defeq g_1$, $Jg_2\defeq-g_2$.
Then $\Phi^\ad J\Phi=0$ and hence $\al\defeq\rk{\cH,\cG;0,\Phi,J}$ is a simple colligation.
It is easily checked that all functions
\[
S^{\rk{\sigma}}\rk{z}
\defeq\IG+\frac{\iu\sigma}{z}\Phi\Phi^\ad J,
\qquad0\leq\sigma\leq1,
\]
are left divisors of the function
\[
S_\al\rk{z}
=\IG+\frac{\iu}{z}\Phi\Phi^\ad J.
\]
On the other side, $S_\al\rk{z}$ has only two regular left divisors:
\begin{align*}
    S^{\rk{0}}\rk{z}&=\IG&
    &\text{and}&
    S^{\rk{1}}\rk{z}&=S_\al\rk{z}.
\end{align*}

\bthmnl{\zitaa{MR0322542}{\cch{I}}}{T4.11-0707}
On the set of all regular left divisors of the functions $S\rk{z}\in\Om_J $ the relations $\prec$ and $\llcurly$ coincide.
\ethm

\bthmnl{\zitaa{MR0322542}{\cch{I}}}{T4.12-0707}
Let $S\rk{z}\in\Om_J $ and $\al\defeq\hgapj $ be a simple colligation such that $S_\al\rk{z}=S\rk{z}$.
The function $S_1\rk{z}$ is a regular left divisor of $S\rk{z}$ if and only if it is the \tcof{}\ of some left divisor of the colligation $\al$.
\ethm
\bproof
Let $S_1\rk{z}$ be a regular divisor of the function $S\rk{z}$.
Then there exist simple colligations $\al_1'$ and $\al_2'$ such that
\[
S_{\al_1'}\rk{z}=S_1\rk{z},
\qquad
S\rk{z}=S_{\al_1'}\rk{z}S_{\al_2'}\rk{z}
\]
and $\al'\defeq\al_1'\al_2'$ is a simple colligation.
In view of \rthm{T2.7}, we have
\[
S_{\al'}\rk{z}
=S_{\al_1'\al_2'}\rk{z}
=S_{\al_1'}\rk{z}S_{\al_2'}\rk{z}
=S\rk{z}.
\]
From this and \rthm{T3.4} we obtain that the colligations $\al$ and $\al'$ are unitarily equivalent.
But then we infer from \rthm{T3.4-0630} that $\al=\al_1\al_2$, where $\al_1$ and $\al_2$ are unitarily equivalent to the colligations $\al_1'$ and $\al_2'$, respectively.
Then
\[
S_{\al_1}\rk{z}=S_{\al_1'}\rk{z}=S_{1}\rk{z}.
\]
The sufficiency of the condition of the theorem follows from \rthm{T2.7} and \rcor{C2.14-0622}.
\eproof

\bthmnl{\zitaa{MR0322542}{\cch{I}}}{T4.13-0707}
Let $\al_1$ and $\al_2$ be left divisors of the simple colligation $\al$.
Then the colligation $\al_1$ is a left divisor of the colligation $\al_2$ if and only if the function $S_{\al_1}\rk{z}$ is a left divisor of the function $S_{\al_2}\rk{z}$.
\ethm
\bproof
The necessity follows from \rthm{T2.7}.
If $S_{\al_1}\rk{z}\prec S_{\al_2}\rk{z}$, then from \rthmss{T4.12-0707}{T4.11-0707} it follows that $S_{\al_1}\rk{z}\llcurly S_{\al_2}\rk{z}$.
Applying again \rthm{T4.12-0707}, we infer $S_{\al_1}\rk{z}=S_{\al_1'}\rk{z}$, where $\al_1'$ is some left divisor of the colligation $\al_2$.
Since $\al_1$ and $\al_1'$ are left divisors of the colligation $\al$, then \rthm{T3.6-0630} implies $\al_1=\al_1'$.
\eproof

\subsection{Invariant subspaces of an operator and regular divisors of the characteristic operator function}\label{subsec4.4-0715}
We embed a given operator $A\in\ek{\cH}$ in a simple colligation $\al\defeq\hgapj $.
Let us consider the mapping $\cB$ which assigns to each subspace $\cH_0\subseteq\cH$ that is invariant with respect to $A$ the operator function $S_{\al_0}\rk{z}$, where $\al_0\defeq\proj{\al}{\cH_0}$.
The function $S_\al\rk{z}$ belongs to the class $\Om_J $ and $S_{\al_0}\rk{z}$ is a regular left divisor of it.

The following assertion is an important sharpening of \rthm{T3.7}.

\bthmnl{\zitaa{MR0322542}{\cch{I}}}{T4.14-0711}
The mapping $\cB$ has the following properties:
\begin{enuiard}
    \il{T4.14-0711.1} Every regular left divisor of the function $S_\al\rk{z}$ belongs to the set of values of the mapping $\cB$.
    \il{T4.14-0711.2} The mapping $\cB$ is bijective.
    \il{T4.14-0711.3} Let $\cH_1$ and $\cH_2$ be invariant subspaces with respect to $A$ and let $S_1\rk{z}$ and $S_2\rk{z}$ be the corresponding left regular divisors.
    Then $S_1\rk{z}$ is a left divisor of $S_2\rk{z}$ if and only if $\cH_1\subseteq\cH_2$.
\end{enuiard}
\ethm
\bproof
The assertion \ref{T4.14-0711.1}, \ref{T4.14-0711.2}, and \ref{T4.14-0711.3} follows from Theorem~\ref{T4.12-0707},~\ref{T3.6-0630}, and~\ref{T4.13-0707}, respectively.
\eproof

\bdefnl{D4.15-0429}
A linear bounded operator $A$ is called \emph{unicellular operator}, if of two arbitrary invariant subspaces \ednote{Fehlt hier etwas?}is a part of the other one.
\edefn

From \rthm{T4.14-0711} it follows easily:

\bthmnl{\zitaa{MR0322542}{\cch{I}}}{T4.16-0429}
Let $S\rk{z}$ be the characteristic function of the simple colligation $\al=\ocol{\cH}{\cG}{A}{\Phi}{J}$.
Then the operator $A$ is a unicellular operator if and only if the set of regular left divisors of the function $S\rk{z}$ is an ordered set.
\ethm

The definition of a unicellular operator is due to M.~S.~Brodskii\footnote{M.~S.~Brodskii: \emph{On Jordan cells of infinite-dimensional operators. (Russian)}
Dokl. Akad. Nauk SSSR (N.S.) 111, No.~5 (1956), 926--929.}.
Obviously the notion ``one block operator'' generalizes the notion ``Jordan block'' to the infinite dimensional case.

\subsection{Quasihermitian colligations. Class $\Om_J^\kqk$}\label{subsec4.5-0713}
\subsubsection{Invertible holomorphic operator functions}\label{subsubsec4.5.1-0711}
Let a holomorphic function $T\rk{z}$ be given in a domain $G$ of the complex plane the values of which are bounded linear operators acting in a Hilbert space $\cH$.
We denote by $G_0$ the set of all points of the domain $G$ for which the operator $T\rk{z}$ has not a bounded inverse which is defined on the whole $\cH$.
Then $G\setminus G_0$ is an open set and the function $T^\inv\rk{z}$ is holomorphic on it.
We will call the function $T\rk{z}$ \emph{invertible in} $G$ if it satisfies the following conditions:
\begin{enuiard}
    \il{subsubsec4.5.1-0711.1} The set $G_0$ has not limit points within $G$.
    \il{subsubsec4.5.1-0711.2} Every point $z\in G_0$ is a pole of the function $T^\inv\rk{z}$.
    \il{subsubsec4.5.1-0711.3} The coefficients associated with negative powers of $z-z_0$ in the Laurent series of $T^\inv\rk{z}$ in the neighborhood of an arbitrary pole $z_0$ are finite dimensional.
\end{enuiard}

\bthml{T4.15-0711}
Let $K\rk{z}$ be a function holomorphic on the domain $G$ the values of which are compact operators in Hilbert space $\cH$.
If for some $z_0\in G$ the operator $\EM+K\rk{z_0}$ has a bounded inverse which is defined on $\cH$, then the function $T\rk{z}\defeq\EM+K\rk{z}$ is invertible in $G$.
\ethm
\bproof
We first consider the case that all vectors $K\rk{ z }h$ ($ z \in G$, $h\in\cH$) belong to a finite dimensional subspace $\cH_0\subseteq\cH$ and denote by $T_0\rk{ z }$ the operator which is induced by $T\rk{ z }$ on $\cH_0$.
The operator $T_0\rk{ z _0}$ has an inverse, since in the opposite case there would exist a nonzero vector $h_0\in\cH_0$ which is annihilated by the operator $T_0\rk{ z _0}$.
This vector is annihilated even by the operator $T\rk{ z _0}$ which contradicts the assumptions of the theorem.
Thus, $\det T_0\rk{ z _0}\neq0$ and hence the set of all zeros of the holomorphic function $\det T_0\rk{ z }$ does not have limit points within $G$.
Applying the rules of determining the elements of the inverse matrix, we infer that the singularities of the function $T_0^\inv\rk{ z }$ can be only points which belong to $G_0$.
Denoting by $P_0$ and $P_1$ the orthoprojectors onto $\cH_0$ and $\cH_1\defeq\cH\ominus\cH_0$, we obtain
\[\begin{split}
    T\rk{ z }
    &=\rk{P_0+P_1}T\rk{ z }\rk{P_0+P_1}\\
    &=P_0T\rk{ z }P_0+P_0T\rk{ z }P_1+P_1T\rk{ z }P_1
    =T_0\rk{ z }P_0+P_0T\rk{ z }P_1+P_1.
\end{split}\]
Thus,
\[
T^\inv\rk{ z }
=T_0^\inv\rk{ z }P_0-T_0^\inv\rk{ z }P_0T\rk{ z }P_1+P_1,
\]
from which it can be seen that $T\rk{ z }$ has an inverse function in all points of its domain.

Now let the dimension of the subspace $\cH_0$ be arbitrary and let $G$ be the set of inner points of some disk.
Without loss of generality one can assume that its center is located in the origin and its radius is one.
In the disk $\abs{ z }<\rho$ ($\abs{ z _0}<\rho<1$) the function $K\rk{ z }$ can be represented by the series
\[
K\rk{ z }
=K_0+ z  K_1+ z ^2 K_2+\dotsb
\]
which converges uniformly in the norm with respect to $ z $, whence
\[
\normb{K\rk{ z }-\sum_{j=0}^N z ^jK_j}<\frac{1}{4}
\qquad(\abs{ z }<\rho)
\]
holds for some $N$.
Using that all operators $K_j$ are compact, we choose finite dimensional operators $R_j$ such that
\[
\norm{K_j-R_j}<\frac{1}{4\rk{N+1}}
\qquad(j=0,1,\dotsc).
\]
Setting $R\rk{ z }\defeq\sum_{j=0}^N z ^jR_j$, we obtain the equality $K\rk{ z }=R\rk{ z }+S\rk{ z }$, where $\norm{S\rk{ z }}<\frac{1}{2}$ ($\abs{ z }<\rho$).
For $\abs{ z }<\rho$ the series $\sum_{j=0}^\infty\rk{-1}^jS^j\rk{ z }$ converges uniformly and is a holomorphic function equal to $\rk{\EM+S\rk{ z }}^\inv$.
Writing $T\rk{ z }$ in the form
\[
T\rk{ z }
=\EM+R\rk{ z }+S\rk{ z }
=\ekb{\EM+R\rk{ z }\rkb{\EM+S\rk{ z }}^\inv}\rkb{\EM+S\rk{ z }}
\]
and taking into account that, as mentioned above, the function in square brackets is invertible in the disk $\abs{ z }<\rho$, we infer that the function $T\rk{ z }$ is also invertible in this disk.
In view of the arbitrariness of the choice of $\rho$, the function $T\rk{ z }$ is invertible in the disk $\abs{ z }<1$.

In order to obtain the proof of the theorem free of restrictions we consider an arbitrary circle $\ga$ whose interior is contained in the domain $G$ and construct circles $\ga_0,\ga_1,\dotsc,\ga_n=\ga$ having the following properties:
1) The center of the circle $\ga_0$ is located at the point $ z _0$;
2) interior points of all circles $\ga_j$ are located in $G$;
3) the circles $\ga_j$ and $\ga_{j+1}$ ($j=0,1,\dotsc,n-1$) have common interior points.
Since $T\rk{ z }$ is invertible within $\ga_0$, the operator $T^\inv\rk{ z }$ exists in some point located within $\ga_1$.
Continuing this procedure, we obtain the invertibility of $T\rk{ z }$ within $\ga$.
It remains to remark that the invertibility of $T\rk{ z }$ in the interior of an arbitrary open disk contained in $G$ implies the invertibility of $T\rk{ z }$ in the whole domain $G$.
\eproof

\rthm{T4.15-0711} (without assumption of finite dimension of the coefficients of Laurent series) was obtained by I.~Ts.~MR0264447 Reviewed Gohberg, I. C.; Kre\u{\i}n, M. G. Theory and applications of Volterra operators in Hilbert space. Translated from the Russian by A. Feinstein Translations of Mathematical Monographs, Vol. 24 American Mathematical Society, Providence, R.I. 1970 x+430 pp. 47.45 (47.40)
Review PDF Clipboard Series Book 271 Citations
MR0246142 Reviewed Gohberg, I. C.; Kre\u{\i}n, M. G. Introduction to the theory of linear nonselfadjoint operators. Translated from the Russian by A. Feinstein Translations of Mathematical Monographs, Vol. 18 American Mathematical Society, Providence, R.I. 1969 xv+378 pp. 47.10berg \cite{MR0042060}.
The given proof is due to Yu.~P.~Ginzburg.
In this section we followed \zitaa{MR0322542}{\capp{II}}.

\subsubsection{Quasihermitian operators}
A bounded linear operator is called \emph{quasihermitian} if its imaginary part is compact.
This class of non-selfadjoint operators could be investigated more completely by the methods of \tcof{}
One of the basic achievements of the theory of \tcof{}\ was the construction 
of triangular models for this class of operators 
(see \cite{MR0062955,MR0080269,MR0100793}).
\rsec{S1551} of this paper is dedicated to this question.
An important role in these researches is played by the nature of the spectrum of 
quasihermitian operators.


\bthmnl{\cite{MR0062955}, \cite{MR0100793}, \zitaa{MR0322542}{\cch{I}}}{T4.16-0715}
The limit points of the spectrum of a quasihermitian operator $A$ belong to the spectrum of its real part.
Every non-real point of the spectrum of the operator $A$ is an eigenvalue of finite multiplicity.
\ethm
\bproof
Let $\rho>\norm{A}$.
In view of $\norm{\reA  }\leq\norm{A}$, the spectrum of $\reA  $ is contained in the disk $\abs{z}<\rho$.
Removing from this disk the spectrum of the operator $\reA  $, we obtain a domain $G$ on which the function $\rk{\reA  -z\IH}^\inv$ is holomorphic.
The obvious identity
\[
A-z\IH
=\rk{\reA  -z\IH}\ekb{\IH+\iu\rk{\reA  -z\IH}^\inv \imA  },
\qquad z\in G,
\]
and \rthm{T4.15-0711} applied to the function
\[
K\rk{z}
\defeq\iu\rk{\reA  -\iu\IH}^\inv \imA  
\]
show that the spectrum of the operator does not have limit points in $G$.
Thus, the first assertion of the theorem is proved.

Let a non-real $z_0$ belong to the spectrum of the operator $A$.
In view of \rthm{T4.15-0711}, again there exists a neighborhood of $z_0$ in which we have the expansion
\[
\rk{A-z\IH}^\inv
=\frac{A_{-n}}{\rk{z-z_0}^n}+\dotsb+\frac{A_{-1}}{z-z_0}+A_0+\rk{z-z_0}A_1+\dotsb,
\]
where $A_{-n},\dotsc,A_{-1}$ are finite dimensional operators.
In view of a theorem due to F.~Riesz (see \cite{MR0056821}), the space $\cH$ in which acts the operator $A$ can be uniquely represented as a direct sum of two invariant subspaces $\cH_0$ and $\cH_1$ with respect to $A$ such that
\[
\sigma\rk{\rstr{A}{\cH_0}}=\set{z_0},
\qquad\sigma\rk{\rstr{A}{\cH_1}}=\sigma\rk{A}\setminus\set{z_0}.
\]
Here the operator $P_0$ which projects onto $\cH_0$ parallel to $\cH_1$ is computed by the formula
\[
P_0
=-\frac{1}{2\pi\iu}\int_\ga\rk{A-z\IH}^\inv\dif z,
\]
where $\ga$ is a sufficiently small oriented neighborhood of the point $z_0$.
In the considered case
\[
P_0
=-\frac{1}{2\pi\iu}\int_\ga\frac{A_{-1}}{z-z_0}\dif z
=-A_{-1},
\]
which implies that $\cH_0$ is finite dimensional.
\eproof

\subsubsection{Quasihermitian colligations}
The colligation $\al\defeq\hgapj $ is called \emph{quasihermitian} if the operator $\Phi$ is compact. 
In view of the conditions of the colligation \eqref{E2.9} the fundamental operator of a quasihermitian colligation is quasihermitian.
The converse assertion is not generally true.
However, each completely non-selfadjoint quasihermitian operator admits an embedding into a simple quasihermitian colligation.
This can be checked by choosing $\cE\defeq\ran{\imA  }$ in \rthm{T2.2}.
This means embedding operator $A$ in the colligation $\al_{\imA  }$ (see \eqref{E1330}).

From the last relation in \eqref{E2.29} and form \eqref{E3.8} of the projection of the colligation, respectively, it follows that the product of quasihermitian colligations and the projection of a quasihermitian colligation onto an arbitrary subspace are again quasihermitian.

\bthmnl{\zitaa{MR0322542}{\cch{I}}}{T4.17-0715}
Let the colligation $\al\defeq\hgapj $ be the product of quasihermitian colligations $\al_j\defeq\rk{\cH_j,\cG;A_j,\Phi_j,J}$, $j=1,2$.
Then
\[
\sigma\rk{A}
=\sigma\rk{A_1}\cup\sigma\rk{A_2}.
\]
\ethm
\bproof
Since the operator $A$ is quasihermitian, then the set $\rho\rk{A}$ is connected.
For this reason the assertion follows from \rthm{T2.9-0715}.
\eproof

\subsubsection{The class $\Om_J^\kqk $}
Let $\cG$ be a Hilbert space and let $J\in\ek{\cG}$ be an operator which fulfills the conditions $J=J^\ad$ and $J^2=\IG$.
We say that an operator function $S\rk{z}$ with values in $\ek{\cG}$ belongs to \emph{the class} $\Om_J^\kqk $ if
\begin{Aeqiq}{0}
    \il{0721.I} $S\rk{z}$ is holomorphic in a domain $G_S$ which is obtained by removing from the extended complex plane some bounded subset which does not have non-real limit points;
    \il{0721.II} $\lim_{z\to\infty}\norm{S\rk{z}-\IG}=0$;
    \il{0721.III} $S^\ad\rk{z}JS\rk{z}-J\geq0$, $\im z>0$, $z\in G_S$;
    \il{0721.IV} $S^\ad\rk{z}JS\rk{z}-J=0$, $\im z=0$, $z\in G_S$;
    \il{0721.V} All operators $S\rk{z}-\IG$, $z\in G_S$, are compact.
\end{Aeqiq}
Let $S_j\rk{z}\in\Om_J^\kqk $, $j=1,2$, and $S\rk{z}=S_1\rk{z}S_2\rk{z}$.
From the identities
\begin{gather}
    S^\ad\rk{z}JS\rk{z}-J=S_2^\ad\rk{z}\rkb{S_1^\ad\rk{z}JS_1\rk{z}-J}S_2\rk{z}+S_2^\ad\rk{z}JS_2\rk{z}-J,\label{E4.23-0721}\\
    S\rk{z}-\IG=\rkb{S_1\rk{z}-\IG}\rkb{S_2\rk{z}-\IG}+\rkb{S_1\rk{z}-\IG}+\rkb{S_2\rk{z}-\IG}\label{E4.24-0721}
\end{gather}
it follows that the class $\Om_J^\kqk $ contains, together with the functions $S_1\rk{z}$ and $S_2\rk{z}$, also their product $S\rk{z}$.

It is obvious that the \tcof{}\ of a quasihermitian colligation belongs to the class $\Om_J^\kqk $.

\bthmnl{\cite{MR0062955,MR0100793}, \zitaa{MR0322542}{\cch{I}}}{T4.18-0721}
The inclusion
\[
\Om_J^\kqk 
\subset\Om_J
\]
holds.
\ethm
\bproof
Let $S\rk{z}\in\Om_J^\kqk $.
Denote by $G_S^{\rk{0}}$ the set of all points $z$ of the domain $G_S$ for which the operator $S\rk{z}+\IG$ has a bounded inverse which is defined on the whole space $\cG$.
From \eqref{0721.II} it follows that there is contained in $G_S^{\rk{0}}$ some neighborhood of the infinitely distant point.
We note that
\[
S\rk{z}+\IG
=2\rkb{\IG+K\rk{z}},
\]
where $K\rk{z}\defeq\frac{1}{2}\rk{S\rk{z}-\IG}$ is a holomorphic function in $G_S$ which takes compact values.
Thus, it follows from \rthm{T4.15-0711} that the set $G_S\setminus G_S^{\rk{0}}$ has not limit points in $G_S$.

We consider the function (see \eqref{E4.6} and \eqref{E4.12-0629})
\[
V\rk{z}
\defeq\iu\rkb{S\rk{z}-\IG}\rkb{S\rk{z}+\IG}^\inv J,\qquad z\in G_S^{\rk{0}}.
\]
In view of \eqref{E4.12-0711}, we have
\begin{gather}
    \frac{V\rk{z}-V^\ad\rk{z}}{2\iu}\geq0,\qquad\im z>0,\;z\in G_S^{\rk{0}},\label{E4.25-0721}\\
    V\rk{z}=V^\ad\rk{z},\qquad\im z=0,\;z\in G_S^{\rk{0}}.\label{E4.26-0721}
\end{gather}
We assume that $V\rk{z}$ has an isolated singularity $z_0$ in the upper half plane.
Then, using a corresponding choice of the vector $g_0\in\cG$, the point $z_0$ becomes also an isolated singularity for the function $\ipa{V\rk{z}g_0}{g_0}$ which contradicts inequality \eqref{E4.25-0721}.
Taking into account the properties of the sets $G_S$ and $G_S^{\rk{0}}$, we see that $V\rk{z}$ is analytically continuable in the whole upper half plane.
In view of \eqref{E4.26-0721} and the principle of symmetry, it can be analytically continued also in the whole lower half plane.
\eproof

\bthmnl{\cite{MR0062955,MR0100793}, \zitaa{MR0322542}{\cch{I}}}{T4.19-0721}
If $S\rk{z}\in\Om_J^\kqk $, then there exists a simple quasihermitian colligation $\al\defeq\hgapj $ such that $G_S\subseteq\rho\rk{A}$ and $S_\al\rk{z}=S\rk{z}$, $z\in G_S$.
\ethm
\bproof
Since $\Om_J^\kqk \subset\Om_J$, then in view of \rthm{T4.3-0629}, there exists a simple colligation $\al=\hgapj $ such that $S_\al\rk{z}=S\rk{z}$ for all $z$ which belong to some neighborhood of the infinitely distant point.
Thus, for sufficiently large values of $\abs{z}$ we have
\[
z\rkb{S\rk{z}-\IG}
=z\rkb{S_\al\rk{z}-\IG}
=-\iu\Phi\Phi^\ad J-\frac{\iu}{z}\Phi A\Phi^\ad J-\frac{\iu}{z^2}\Phi A^2\Phi^\ad J-\dotsb.
\]
From this and conditions \eqref{0721.II} and \eqref{0721.V} it follows that the operator
\[
\Phi\Phi^\ad
=\iu\lim_{z\to\infty}\rkb{S\rk{z}-\IG}J
\]
is compact.
This means that $\Phi$ is a compact operator and $\al$ a quasihermitian colligation.
Now the assertions $G_S\subseteq\rho\rk{A}$ and $S_\al\rk{z}=S\rk{z}$, $z\in G_S$, are obtained from the following statement.
\eproof

\bthmnl{\cite{MR0020719,MR0100793}, \zitaa{MR0322542}{\cch{I}}}{T4.20-0721}
Let $\al\defeq\hgapj $ be a simple quasihermitian colligation.
If $z_0\in\sigma\rk{A}$, then there does not exist a disk $\ck\defeq\setaca{z}{\abs{z-z_0}<r}$ on which a holomorphic function can be defined that coincides with $S_\al\rk{z}$ at all points $z\in\rho\rk{A}\cap\ck$.
\ethm

\subsection{$\cG$-finite dimensional colligations.
    Class $\Om_J^\kek$}\label{subsec4.6-1001}
The colligation $\al=\hgapj $ is called \emph{\tG{finite} dimensional} if its external space is finite dimensional.
All results of the preceding subsection are true for \tG{finite} dimensional colligations.

Let $\cG$ be a finite dimensional Hilbert space and let the operator $J\in\ek{\cG}$ satisfy the conditions $J=J^\ad$ and $J^2=\IG$.
An operator function $S\rk{z}$ of the complex variable $z$ with values in $\ek{\cG}$ is said to belong to \emph{the class} $\Om_J^\kek $ if
\begin{Aeqie}{0}
    \il{0721.Ie} $S\rk{z}$ is holomorphic in a domain $G_S$, which is obtained by taking off from the extended complex plane some bounded subset which has not non-real limit points;
    \il{0721.IIe} $\lim_{z\to\infty} S\rk{z}=\IG$;
    \il{0721.IIIe} $S^\ad\rk{z}JS\rk{z}-J\geq0$, $\im z>0$, $z\in G_S$;
    \il{0721.IVe} $S^\ad\rk{z}JS\rk{z}-J=0$, $\im z=0$, $z\in G_S$.
\end{Aeqie}

Obviously, a \tcof{}\ of a finite dimensional colligation $\al\defeq\hgapj $ belongs to the class $\Om_J^\kek $.
On the other side, as it follows from \rthm{T4.19-0721}, in the case of a finite dimensional space $\cG$ the equalities $\Om_J=\Om_J^\kqk =\Om_J^\kek $ hold.
Thus, we have the following statement.

\bthmnl{\cite{MR0062955,MR0100793}, \zitaa{MR0322542}{\cch{I}}}{T4.21-0721}
If $S\rk{z}\in\Om_J^\kek $, then there exists a simple \tG{finite} dimensional colligation $\al=\hgapj $ such that $G_S\subseteq\rho\rk{A}$ and $S_\al\rk{z}=S\rk{z}$, $z\in G_S$.
\ethm

\breml{R4.22-10-01}
Let $\cG$ be a finite dimensional Hilbert space, $\dim\cG=r<\infty$.
We will identify linear operators in $\cG$ with square matrices of $r$\nobreakdash-th order, assuming that an orthonormal basis in $\cG$ is chosen.
In this case we will identify the class $\Om_J^\kek $ of operator functions with the class $\Om_J\rk{\C^r}$ of matrix functions of $r$\nobreakdash-th order that satisfy conditions~\eqref{0721.Ie}--\eqref{0721.IVe}.
Note that the following results can be generalized to the infinite dimensional case.
\erem

\subsection{On one model representation of a bounded linear operator which is simple with respect to the finite dimensional subspace}
Let $\cH$ be a Hilbert space and $A\in\ek{\cH}$.

\begin{defn}\label{D4.23-1015}
    The operator $A$ is called \emph{simple with respect to a subspace} $\cE \subseteq \cH$ if the following conditions are satisfied
    \begin{enumerate}
        \item[1)] $\ran{\imA} \subseteq \cE$;
        \item[2)] the space $\cH$ is the closure of the linear hull of vectors of the form
        \[
        A^n h,
        \qquad n=0,1,2,\dotsc;h\in\cE.
        \]
    \end{enumerate}
\end{defn}

Assume that the operator A is simple with respect to a finite dimensional
subspace $\cE$. In this case, taking into account \rthm{T2.2} and \rrem{R2.3-0929}, we 
embed the operator $A$ in a simple \tG{finite} dimensional colligation
\beql{E4.27-0801}
\al
:=\rk{\cH,\C^r;A,\Phi,J}
\eeq
such that $\cE = \ran{{\Phi}^*}$.
Further, consider the matrix function of $r$\nobreakdash-th order (see \eqref{E4.1})
\[
V_\al\rk{z}
:=\frac{1}{2}\Phi\rk{\reA -z\IH}^\inv\Phi^\ad.
\]
We note that the \tcof{}\ $S_\al\rk{z}$ is expressed in terms of $V_\al\rk{z}$ by \eqref{E4.7}.
The function $V_\al\rk{z}$ admits the representation (see \eqref{E4.17-0629})
\beql{E4.28-0801}
V_\al\rk{z}=\int_a^b\frac{\dif\sigma\rk{t}}{t-z},
\qquad-\infty<a<b<+\infty,\;z\notin[a,b],
\eeq
where $\sigma\rk{t}$ is a monotonically increasing matrix function of $r$\nobreakdash-th order on $[a,b]$.

Let $\cHt$ be the set of all continuous functions  $h\rk{t}$ on $[a,b]$ with values in $\C^r$.
We introduce in $\cHt$ a scalar product by setting
\[
\ipa{h'}{h''}_{\cHt}
\defeq2\int_a^b\ipa{\dif\sigma\rk{t}h'\rk{t}}{h''\rk{t}},
\]
where the functions $h'\rk{t}$ and $h''\rk{t}$ be considered as equivalent if
\[
\norm{h'-h''}_{\cHt}^2
=\ipa{h'-h''}{h'-h''}_{\cHt}
=0.
\]
After identifying equivalent elements we complete $\cHt$ to a Hilbert space which we denote by $\cH_{\cm}$.

We define on $\cHt$ an operator $\At$ by the formula
\[
(\At h)\rk{t}
\defeq th\rk{t}+\iu J\int_a^b\dif\sigma\rk{s}h\rk{s},
\]
where $J$ is the directing operator of the colligation \eqref{E4.27-0801}.
Obviously, each of the operators
\[
(\At_1h)\rk{t}\defeq th\rk{t},
\qquad(\At_2h)\rk{t}\defeq J\int_a^b\dif\sigma\rk{s}h\rk{s}
\]
is bounded and selfadjoint.
The operator $\At_2$ maps the space $\cHt$ into the finite dimensional space consisting of constant functions on $[a,b]$.
Moreover the operator $\At$ maps equivalent functions to equivalent functions.
Thus, the operator $\At$ can be continuously extended to the whole space $\cH_{\cm}$ by maintaining the norm.
We denote by $A_{\cm}$ the extension of $\At$.

Consider the operator $\Phit\in\ek{\cHt,\C^r}$, which is defined by
\beql{E4.29-0801}
(\Phit h)\rk{t}
\defeq2\int_a^b\dif\sigma\rk{s}h\rk{s}.
\eeq
Obviously, $\Phit$ is a bounded operator which images on equivalent functions coincide.
The continuous extension of the operator $\Phit$ on $\cH_{\cm}$ we denote by $\Phi_{\cm}$.
For arbitrary $h\in\cHt$ and $g\in\C^r$ it holds
\[
\ipa{\Phi_{\cm}h}{g}
=\ipab{2\int_a^b\dif\sigma\rk{s}h\rk{s}}{g}
=2\int_a^b\ipab{\dif\sigma\rk{s}h\rk{s}}{g}
=\ipa{h}{h_g}_{\cH_{\cm}},
\]
where $h_g\rk{t}\defeq g$, $t\in[a,b]$.
Thus,
\beql{E4.30-0801}
(\Phi_{\cm}^\ad g)\rk{t}=h_g\rk{t},
\qquad g\in\C^r,\;t\in[a,b].
\eeq
Thus, for $h\in\cH_{\cm}$ we have
\[
\frac{1}{\iu}\rk{A_{\cm}-A_{\cm}^\ad}h\rk{t}
=2J\int_a^b\dif\sigma\rk{s}h\rk{s}
=\Phi_{\cm}^\ad J\Phi_{\cm}h\rk{t}.
\]
Hence,
\beql{E4.31-0801}
\al_{\cm}
\defeq\rk{\cH_{\cm},\C^r;A_{\cm},\Phi_{\cm},J}
\eeq
is an operator colligation.
The colligation $\al_{\cm}$ will be called \emph{model} colligation for
the operator $A$.

We note that
\beql{E4.32-0801}
\rkb{\reA_{\cm} -z\Iu{\cH_{\cm}}}^{-1}h\rk{t}
=\frac{h\rk{t}}{t-z}.
\eeq
Taking into account \eqref{E4.29-0801}, \eqref{E4.30-0801} and \eqref{E4.28-0801}, for $g\in\C^r$ we obtain
\beql{E4.33-0801}
V_{\al_{\cm}}\rk{z}g
=\frac{1}{2}\Phi_{\cm}\rkb{\reA_{\cm} -z\Iu{\cH_{\cm}}}^\inv\Phi_{\cm}^\ad g
=\int_a^b\frac{\dif\sigma\rk{t}}{t-z}g
=V_\al\rk{z}g.
\eeq
Now it follows from \eqref{E4.7} that in some neighborhood of the infinitely distant point we have
\beql{E4.34-0801}
S_{\al_{\cm}}\rk{z}
=S_\al\rk{z}.
\eeq

\bthmnl{\cite{MR0100793}}{T4.22-0801}
The operator colligation $\al$ (see \eqref{E4.27-0801}) is unitarily equivalent to the model colligation $\al_{\cm}$ (see \eqref{E4.31-0801}).
In particular, each bounded linear operator $A$ with finite dimensional imaginary part is unitarily equivalent to the model operator $A_{\cm}$ defined on $\cHt$ by the formula
\[
(A_{\cm}h)\rk{t}
:=th\rk{t}+\iu J\int_a^b\dif\sigma\rk{s}h\rk{s}, \ \ h \in \cHt,
\]
and extended on $\cH_{\cm}$ as described above.
\ethm
\bproof
The colligation $\al$ is simple.
Thus, in view of \eqref{E4.34-0801}, it follows from \rthm{T3.4} that it is sufficient to verify the simplicity of the model colligation $\al_M$.

Note that the vector function
\[
g_z\rk{t}:=\frac{1}{2\iu\rk{t-z}}J\rkb{S_{\al_{\cm}}\rk{z}+\IG}g,
\qquad g\in\C^r,\;\norm{z}>\norm{A_{\cm}},\;\im z\neq0,
\]
belongs to the space $\cHt$ for every fixed $z$ and, taking into account \eqref{E4.33-0801}, \eqref{E4.6} and \eqref{E4.30-0801} we obtain
\[\begin{split}
    A_{\cm}g_z\rk{t}
    &=tg_z\rk{t}+\iu J\int_a^b\dif\sigma\rk{s}g_z\rk{s}\\
    &=zg_z\rk{t}+\rk{t-z}g_z\rk{t}+\frac{1}{2}J\int_a^b\frac{\dif\sigma\rk{s}}{t-s}J\rkb{S_{\al_{\cm}}\rk{z}+\IG}g\\
    &=zg_z\rk{t}+\frac{1}{2\iu}J\rkb{S_{\al_{\cm}}\rk{z}+\IG}g-\frac{1}{2\iu}J\rkb{S_{\al_{\cm}}\rk{z}-\IG}g\\
    &=zg_z\rk{t}-\iu Jg
    =zg_z\rk{t}-\iu h_{Jg}\rk{t}
    =zg_z\rk{t}-\iu\Phi_{\cm}^\ad Jg.
\end{split}\]
This means
\beql{E4.35-0801}
\rk{A_{\cm}-z\Iu{\cH_{\cm}}}^\inv\Phi_{\cm}^\ad Jg
=\iu g_z\rk{t}
=\frac{J\rk{S_{\al_{\cm}}\rk{z}+\IG}}{2\rk{t-z}}g.
\eeq

Let there exist a vector $h\in\cH_{\cm}$ such that
\[
\ipa{h}{A_{\cm}^n\Phi_{\cm}^\ad Jg}_{\cH_{\cm}}=0,
\qquad g\in\C^r,\;n=0,1,2,\dotsc
\]
Then
\[
\ipab{h}{\rk{A_{\cm}-z\Iu{\cH_{\cm}}}^\inv\Phi_{\cm}^\ad Jg}_{\cH_{\cm}}=0,
\qquad g\in\C^r.
\]
From this, in view of \eqref{E4.35-0801} it follows
\[
\rkb{S_{\al_{\cm}}\rk{z}+\Iu{\cH_{\cm}}}^\ad J\int_a^b\frac{\dif\sigma\rk{s}h\rk{s}}{s-\ko{z}}
=0.
\]
Since the operator $\rk{S_{\al_{\cm}}\rk{z}+\Iu{\cH_{\cm}}}$ is invertible, from this we obtain
\[
\int_a^b\frac{\dif\sigma\rk{s}h\rk{s}}{s-z}
=0
\]
and this means that the function $h(t)$ is equivalent to the null function.
\eproof

We note that with this method, but with additional technical means, this result can be obtained for quasihermitian operators (see \cite{MR0080269}).

\subsection{Multiplicative representation of the characteristic function of an operator acting in a finite dimensional space}\label{subsec4.8-0917}
Let $A$ be a linear operator acting in a finite dimensional space $\cH$.
We embed the operator $A$ in a simple \tG{finite} dimensional colligation (see \rthm{T2.2} and \rrem{R2.3-0929})
\beql{E4.36-0805}
\al
:=\rk{\cH,\C^r;A,\Phi,J}.
\eeq

Let $\set{\lambda_j}_{j=1}^n$ be the eigenvalues of the operator $A$, taking into account their multiplicity.
We choose an orthonormal basis $\set{h_j}_{j=1}^n$ in $\cH$ such that with respect to it the matrix of the operator $A$ has upper triangular form:
\begin{align*}
    Ah_1&= \lambda_1h_1,\\
    Ah_2&=a_{12}h_1+ \lambda_2h_2,\\
    &\vdots\\
    Ah_n&=a_{1n}h_1+a_{2n}h_2+\dotsb+ \lambda_nh_n,
\end{align*}
\tie{},
\beql{E4.37-0805}
A
=
\begin{pmatrix}
    \lambda_1&a_{12}&a_{13}&\hdots&a_{1n}\\
    0& \lambda_2&a_{23}&\hdots&a_{2n}\\
    0&0& \lambda_3&&a_{3n}\\
    \vdots&\vdots&&\ddots\\
    0&0&0&& \lambda_n
\end{pmatrix}.
\eeq

Let $\cH_k$ be the subspace of $\cH$ with orthonormal basis $\set{h_j}_{j=1}^k$, $k=1,2,\dots,n$.
The subspace $\cH_k$ is invariant with respect to $A$, where
\[
\set{0}
=\cH_0
\subset\cH_1
\subset\cH_2
\subset\dotsb
\subset\cH_n
=\cH.
\]
Let $\cF_k\defeq\cH_k\ominus\cH_{k-1}$ and
\beql{E4.38-0805}
\al_k:=\proj{\al}{\cF_k},\qquad k=1,2,\dots,n.
\eeq
Then, in accordance with \eqref{E2.49-0622} and \eqref{E2.57-0801},
\begin{gather}
    \al=\al_1\al_2\dotsm\al_n,\label{E4.39-0805}\\
    S_\al\rk{z}=S_{\al_1}\rk{z}S_{\al_2}\rk{z}\dotsm S_{\al_n}\rk{z}.\label{E4.40-0805}
\end{gather}
Note that
\beql{E4.41-0805}
\al_k=\ocol{\cF_k}{\C^r}{A_k}{\tau_k}{J},\qquad k=1,2,\dotsc,n,
\eeq
where
\beql{E4.42-0805}
A_k\defeq \lambda_kI_{\cF_k},\qquad \tau_k:=\rstr{\Phi}{\cF_k},\qquad k=1,2,\dotsc,n,
\eeq
and the condition of colligation has the form:
\beql{E4.43-0805}
\tau_k^\ad J\tau_k=2\rk{\im \lambda_k}\Iu{\cF_k},\qquad k=1,2,\dotsc,n.
\eeq
Thus,
\beql{E4.44-0805}
S_{\al_k}\rk{z}\defeq\Iu{r} +\frac{\iu}{z-\lambda_k}\tau_k\tau_k^\ad J,\qquad k=1,2,\dotsc,n.
\eeq
If we denote by $P_k$ the orthoprojector from $\cH$ onto $\cF_k$, $k=1,2,\dots,n$, we obtain from \eqref{E4.42-0805} then
\[
\sum_{k=1}^n\tau_k\tau_k^\ad
=\sum_{k=1}^n\Phi P_k\Phi^\ad
=\Phi\Phi^\ad.
\]
From this and \eqref{E4.43-0805} we get
\[
2\sum_{k=1}^n\abs{\im \lambda_k}
=\sum_{k=1}^n\abs{\tr\tau_k^\ad J\tau_k}
\leq\sum_{k=1}^n\tr\tau_k^\ad\tau_k
=\tr\sum_{k=1}^n\tau_k\tau_k^\ad
=\tr\Phi\Phi^\ad.
\]
Thus, the proof of the following statement is complete.

\bthmnl{\cite{MR0100793}}{T4.23-0805}
The colligation $\al$ $($see \eqref{E4.36-0805}$)$ admits the decomposition \eqref{E4.39-0805}, where the colligations $\al_k$ have form \eqref{E4.41-0805} where the \tcof{}\ $S_\al\rk{z}$ admits a representation in form \eqref{E4.40-0805}, where the factors $S_{\al_k}\rk{z}$ have form \eqref{E4.44-0805}.
Moreover, the relations
\begin{align}
    \tau_k^\ad J\tau_k&=2\rk{\im \lambda_k}\Iu{\cF_k},\qquad k=1,2,\dotsc,n,\label{E4.45-0815}\\
    \sum_{k=1}^n\tau_k\tau_k^\ad&=\Phi\Phi^\ad,\label{E4.45-0805}\\
    2\sum_{k=1}^n\abs{\im \lambda_k}&\leq\tr\Phi\Phi^\ad\label{E4.46-0805}
\end{align}
hold.
\ethm

The decomposition of an open system in a chain of elementary open systems is closely related to the transformation of its basic operator $A$ to triangular form.
Indeed, let $\cO_\al$ and $\cO_{\al_k}$ be the open systems which correspond to the colligation $\al$ (see \eqref{E4.36-0805}) and $\al_k$ (see \eqref{E4.38-0805}), respectively.
The transformation of the operator $A$ to triangular form \eqref{E4.37-0805} generates a decomposition of the colligation $\al$ in the form of the product \eqref{E4.39-0805} which, in view of \rthm{T2.7}, leads to the decomposition
\[
\cO_\al
=\cO_{\al_1}\curlyvee\cO_{\al_2}\curlyvee\dotsb\curlyvee\cO_{\al_n}.
\]
Thus, for $j<k$ the system $\cO_{\al_k}$ influences the system $\cO_{\al_j}$.
From the form of factors \eqref{E4.44-0805} it is seen that this influence depends on
\beql{E4.48-0805}\begin{split}
    \tau_\ell^\ad J\tau_r
    =\rstr{P_\ell\Phi^\ad J\Phi}{\cF_r}
    &=\frac{1}{\iu}\rstr{P_\ell\rk{A-A^\ad}}{\cF_r}\\
    &=\frac{1}{\iu}\rstr{P_\ell A}{\cF_r}
    =\frac{1}{\iu}a_{\ell r},
    \qquad j\leq\ell<r\leq n.
\end{split}\eeq
If the operator $A$ is selfadjoint, then in this case the eigenstates $h_j$, as it is known, are independent.
This is connected with the fact that in this case we have $a_{\ell r}=0$, $\ell<r$ in \eqref{E4.37-0805}.
But then, as it follows from \eqref{E4.48-0805}, the multiplicative decomposition \eqref{E4.40-0805} has additive character
\[
S_\al\rk{z}
=\Iu{r}+\iu\sum_{k=1}^n\frac{\tau_k\tau_k^\ad}{z-\lambda_k}J.
\]
This emphasizes again the additive character of the spectral theory of selfadjoint operators and explains the reason for the multiplicative nature of the spectral theory of non-selfadjoint operators.

If $r=1$ and $J=1$, then it follows from the colligation condition \eqref{E4.43-0805} and formula \eqref{E4.44-0805} that
\[
S_{\al_k}\rk{z}=\frac{\ko{ \lambda_k}-z}{ \lambda_k-z},
\qquad k=1,2,\dotsc,n.
\]
Thus, in this case representation \eqref{E4.40-0805} has the form
\beql{E4.47-0805}
S_\al\rk{z}
=\prod_{k=1}^n\frac{\ko{ \lambda_k}-z}{ \lambda_k-z},
\eeq
where, taking into account that $J=1$, from \eqref{E4.43-0805} and the simplicity of the colligation $\al$ we infer
\[
\im \lambda_k>0,
\qquad k=1,2,\dotsc,n.
\]
From form \eqref{E4.47-0805} of the \tcof{}\ $S_\al\rk{z}$ it follows that in this case the operator $A$ is determined by its spectrum up to unitary equivalence.

\breml{R4.24-0928}
In what follows we will write representation \eqref{E4.44-0805} of the factors $S_{\al_k}\rk{z}$ in a form that does not depend on the space $\cH$.
For this taking into account that $\dim\cF_k=1$ and choosing the basis vector $e_k\in\cF_k$, $k=1,2,3,\dotsc$, we identify the operator $\tau_k\in\ek{\cF_k,\cG}$ with the vector $\eta_k\defeq\tau_k\rk{e_k}$.
Then for $f\in\cF_k$ and $g\in\cG$ we get:
\begin{align*}
    \tau_k\rk{f}
    =\tau_k\rkb{\ipa{f}{e_k}e_k}
    &=\ipa{f}{e_k}\eta_k,\\
    \tau_k\tau_k^\ad\rk{g}
    =\tau_k\rkb{\ipab{\tau_k^\ad\rk{g}}{e_k}e_k}
    =\ipa{g}{\tau_k\rk{e_k}}\tau_k\rk{e_k}
    &=\ipa{g}{\eta_k}\eta_k\\
    &=\eta_k\ipa{g}{\eta_k}
    =\eta_k\eta_k^\ad g,
\end{align*}
where $\eta_k^\ad g\defeq\ipa{g}{\eta_k}$.
In this case, we consider vectors of $\C^r$ as column vectors.
Further, from \eqref{E4.45-0815} it follows
\[
\eta_k^*J\eta_k=\ipa{J\eta_k}{\eta_k}=\ipab{J\tau_k\rk{e_k}}{\tau_k\rk{e_k}}=\ipab{\tau_k^\ad J\tau_k\rk{e_k}}{e_k}=2\im\lambda_k.
\]
Here it is obvious $\norm{\eta_k}=\norm{\tau_k}$.
\erem

Now \rthm{T4.23-0805} can be reformulated as follows.

\bthmnl{\cite{MR0100793}}{T4.25-1001}
The \tcof{}\ $S_\al\rk{z}$ of the colligation \eqref{E4.36-0805} admits the representation
\beql{E4.50-1006}
S_{\al}\rk{z}=\prodr\limits_{k=1}^n\rkb{\Iu{r} +\frac{\iu}{z-\lambda_k}\eta_k\eta_k^\ad J},
\eeq
where $\set{\lambda_k}_{k=1}^n$ are the eigenvalues of the operator $A$, and $\set{\eta_k}_{k=1}^n$ are column vectors from $\C^r$, and the relations
\begin{align}
    \eta_k^\ad J\eta_k&=2\im \lambda_k,\qquad k=1,2,\dotsc,n,\label{E4.50-1001}\\
    \sum_{k=1}^n\eta_k\eta_k^\ad&=\Phi\Phi^\ad,\label{E4.51-1001}\\
    2\sum_{k=1}^n\abs{\im \lambda_k}&\leq\tr\Phi\Phi^\ad\label{E4.52-1001}
\end{align}
hold.
\ethm

\section{Multiplicative integral}\label{sec5-0725}


In this section we follow \cite{P2} and \zitaa{MR0100793}{Appendix}.
With further applications in mind, we assume that all Hilbert spaces considered in this section are finite dimensional.
Note that many of the statements under consideration, as it is easy to see, carry over to the infinite dimensional case.

\subsection{Infinite products}
For products of square matrices we will use the notations
\[
\prodr\limits_{j=1}^nB_j\defeq B_1B_2\dotsm B_n,
\qquad
\prodl\limits_{j=1}^nB_j\defeq B_n\dotsm B_2B_1.
\]

\bdefnl{D5.1-0725}
The infinite product $\prodr\limits_{j=1}^\infty B_j$ is called \emph{convergent} if there exists the limit
\[
\lim_{n\to\infty}C_n\eqdef C
\]
of the partial products
\beql{E5.1-0725}
C_n\defeq\prodr\limits_{j=1}^nB_j,\qquad n=1,2,3,\dotsc,
\eeq
and the matrix $C$ is invertible.
\edefn

Obviously, in this case all matrices $\set{B_j}_{j=1}^\infty$ are invertible and
\[
C^\inv=\lim_{n\to\infty}C_n^\inv,
\]
where
\beql{E5.2-0725}
C_n^\inv=\prodl\limits_{j=1}^nB_j^\inv,\qquad n=1,2,3,\dotsc
\eeq
Hence, it is necessary and sufficient for the convergence of the product $\prodr\limits_{j=1}^\infty B_j$ that the sequences \eqref{E5.1-0725} and 
\eqref{E5.2-0725} converge.
Since $B_j$ is invertible, it admits a representation
\[
B_j=\ec^{H_j},
\]
where it is known that the matrix $H_j$ is not uniquely determined.
However, the specific of multiplicative products enables us to determine in natural way and thus uniquely choosing the exponent.

This enables us to write the products \eqref{E5.1-0725} and \eqref{E5.2-0725} in the equivalent form
\beql{E5.3-0725}
C_n=\prodr\limits_{j=1}^n\ec^{H_j},
\qquad
C_n^\inv=\prodl\limits_{j=1}^n\ec^{-H_j},\qquad n=1,2,\dotsc
\eeq

\bleml{L5.2-0725}
The following inequalities hold
\begin{gather}
    \norm{\ec^{H_1}\ec^{H_2}\dotsm\ec^{H_n}}\leq\ec^{\rho_n}\label{E5.4-0725}\\
    \norm{\ec^{H_1}\ec^{H_2}\dotsm\ec^{H_n}-\Iu{r}}\leq \rho_n\ec^{\rho_n}\label{E5.5-0725}\\
    \normb{\ec^{H_1}\ec^{H_2}\dotsm\ec^{H_n}-\rk{\Iu{r}+H_1+H_2+\dotsb+H_n}}\leq\frac{1}{2}\rho_n^2\ec^{\rho_n},\label{E5.6-0725}
\end{gather}
where $r$ is the order of the matrices $H_1,\dotsc,H_n$ and 
\beql{E5.7-0725}
\rho_n\defeq\norm{H_1}+\norm{H_2}+\dotsb+\norm{H_n}.
\eeq
\elem
\bproof
Let us prove, for example, the inequality \eqref{E5.6-0725}.
It is easily seen that
\begin{multline*}
    \normb{\ec^{H_1}\ec^{H_2}\dotsm\ec^{H_n}-\rk{\Iu{r}+H_1+H_2+\dotsb+H_n}}\\
    \leq\ec^{\norm{H_1}}\ec^{\norm{H_2}}\dotsm\ec^{\norm{H_n}}-\rkb{1+\norm{H_1}+\norm{H_2}+\dotsb+\norm{H_n}}.
\end{multline*}
Now it suffices to note that
\[
\ec^\rho-\rk{1+\rho}
\leq\frac{1}{2}\rho^2\ec^\rho,\qquad\rho\geq0.
\]
Inequalities \eqref{E5.4-0725} and \eqref{E5.5-0725} can be analogously proved.
\eproof

\bthml{T5.3-0725}
The convergence of the series $\sum_{j=1}^\infty\norm{H_j}$ is sufficient for the convergence of the product $\prodr\limits_{j=1}^\infty\ec^{H_j}$.
\ethm
\bproof
For $p<q$ it follows from \eqref{E5.5-0725} that
\[\begin{split}
    \normb{\prodr\limits_{j=1}^p\ec^{H_j}-\prodr\limits_{j=1}^q\ec^{H_j}}
    &=\normb{\prodr\limits_{j=1}^p\ec^{H_j}\rkb{\Iu{r}-\prodr\limits_{j=p+1}^q\ec^{H_j}}}\\
    &\leq\ec^{\rho_p}\rkb{\sum_{j=p+1}^q\norm{H_j}}\ec^{\rho_q-\rho_p},
\end{split}\]
where $\rho_p$ has form \eqref{E5.7-0725}.
Replacing $H_j$ by $-H_j$ we get an analogous estimate.
Hence, in this case sequences \eqref{E5.3-0725} are Cauchy sequences.
\eproof

\bleml{L5.4-0725}
Let
\[
C_n\defeq\prodr\limits_{j=1}^nK_j,
\qquad
D_n\defeq\prodr\limits_{j=1}^nL_j
\]
and let
\[
\mu_n
\defeq\max_j\setb{\norm{L_1}\cdot\norm{L_2}\dotsm\norm{L_{j-1}}\cdot\norm{K_{j+1}}\dotsm\norm{K_n}}.
\]
Then
\beql{E5.8-0725}
\norm{C_n-D_n}
\leq\mu_n\sum_{j=1}^n\norm{K_j-L_j}.
\eeq
\elem
\bproof
The assertion follows from the equality
\begin{multline*}
    C_n-D_n
    =\rk{K_1K_2\dotsm K_{n-1}K_n-L_1K_2\dotsm K_{n-1}K_n}\\
    +\rk{L_1K_2\dotsm K_{n-1}K_n-L_1L_2K_3\dotsm K_{n-1}K_n}\\
    +\dotsb+\rk{L_1L_2\dotsm L_{n-1}K_n-L_1L_2\dotsm L_{n-1}L_n}.\qedhere
\end{multline*}
\eproof

\subsection{Multiplicative Stieltjes integral}\label{subsec5.2-0928}
Let a scalar function $f\rk{t}$ and an operator-valued function $H\rk{t}\in\ek{\cG}$ be given on the interval $[a,b]$.
Denote by $T$ a partition of the interval $[a,b]$ into $n$ parts
\[
a=t_0<t_1<t_2<\dotsb<t_n=b.
\]
Denote by $\xi$ a set of chosen intermediate points $\set{\xi_j}_{j=1}^n$:
$t_{j-1}\leq\xi_j\leq t_j$, $j=1,2,\dots,n$.
Let $\Dl_j\defeq H\rk{t_j}-H\rk{t_{j-1}}$, $j=1,2,\dotsc,n$.
Consider the product
\[
\Pi\rk{T,\xi}
\defeq\ec^{f\rk{\xi_1}\Dl_1}\ec^{f\rk{\xi_2}\Dl_2}\dotsm\ec^{f\rk{\xi_n}\Dl_n}
=\prodr\limits_{j=1}^n\ec^{f\rk{\xi_j}\Dl_j}.
\]
The product $\Pi\rk{T,\xi}$ is called \emph{the integral product of the Stieltjes type for the function $f\rk{t}$ with respect to the weight $H\rk{t}$} formed to the partition $T$ with the set of intermediate points $\xi$.
Now we suppose that under the condition that the maximal length of the partition $T$
\[
\la\rk{T}
\defeq\max_{1\leq j\leq n}\rk{t_j-t_{j-1}}
\]
tends to zero there exists the invertible limit of the product $\Pi\rk{T,\xi}$ regardless of the choice of partitions $T$ and sets of intermediate points $\xi$.
This limit will be called \emph{the multiplicative Stieltjes integral} and denoted by
\beql{E5.9-0805}
\intr\limits_a^b\ec^{f\rk{t}\dif H\rk{t}}
\defeq\lim_{\la\rk{T}\to0}\prodr\limits_{j=1}^n\ec^{f\rk{\xi_j}\Dl_j}.
\eeq

A question naturally arises.
Formulate conditions on $f\rk{t}$  and $H\rk{t}$ which ensure that the multiplicative integral exists.
Regardless of the fact that, in view of the non-commutativity of the matrix multiplication,
\[
\intr\limits_a^b\ec^{f\rk{t}\dif H\rk{t}}
\neq\ec^{\int_a^bf\rk{t}\dif H\rk{t}},
\]
it is interesting to compare the multiplicative Stieltjes integral of the form \eqref{E5.9-0805} with the ordinary additive Stieltjes integral.

For the additive Stieltjes integral we have the following classic existence theorem:
\begin{quote}
    If $f\rk{t}$ is continuous and $H\rk{t}$ is of bounded variation on $[a,b]$, then the Stieltjes integral $\int_a^bf\rk{t}\dif H\rk{t}$ exists.
\end{quote}

An analogous statement for the multiplicative integral \eqref{E5.9-0805} is not true, how it can be seen from the following example.

Let $f\rk{t}=1$ on $[a,b]$, whereas
\[
H\rk{t}
=
\begin{cases}
    H_1,&a\leq t<c,\\
    H_2,&t=c,\\
    H_3,&c<t\leq b.
\end{cases}
\]
In this case,
\[
\Pi\rk{T,\xi}
=
\begin{cases}
    \ec^{H_3-H_1},&c\in T;\\
    \ec^{H_2-H_1}\ec^{H_3-H_2},&c\in T.
\end{cases}
\]
Choosing $H_1$, $H_2$, and $H_3$ such that
\[
\ec^{H_2-H_1}\ec^{H_3-H_2}
\neq\ec^{H_3-H_1},
\]
we see that the limit
\[
\lim_{\la\rk{T}\to0}\Pi\rk{T,\xi}
\]
does not exist.

\bthml{T5.5-0805}
Let a scalar function $f\rk{t}$ be Riemann-integrable at $[a,b]$ and assume that a function $H\rk{t}$ with values in $\ek{\cG}$ satisfies a Lipschitz condition on $[a,b]$.
Then the multiplicative integral \eqref{E5.9-0805} exists.
\ethm
\bproof
In view of the assumptions of the theorem, there exist constants $K>0$ and $L>0$, such that
\begin{enuiap}{0}
    \il{T5.5-0805.1}
    \beql{E5.10-0805}
    \sup_{[a,b]}\absb{f\rk{t}}
    \leq K;
    \eeq
    \il{T5.5-0805.2} for all $t',t''\in[a,b]$
    \beql{E5.11-0805}
    \normb{H\rk{t'}-H\rk{t''}}
    \leq L\abs{t'-t''}
    \eeq
    holds.
\end{enuiap}

In view of the Cauchy criterion, it suffices to prove that the integral products $\Pi\rk{T^{\rk{1}},\xi^{\rk{1}}}$ and $\Pi^\inv\rk{T^{\rk{1}},\xi^{\rk{1}}}$ differ little from $\Pi\rk{T^{\rk{2}},\xi^{\rk{2}}}$ and $\Pi^\inv\rk{T^{\rk{2}},\xi^{\rk{2}}}$, respectively, if $\la\rk{T^{\rk{1}}}$ and $\la\rk{T^{\rk{2}}}$ are sufficiently small.

Consider the partition $T$ obtained by union of the points of partition $T^{\rk{1}}$ and $T^{\rk{2}}$.
Let $\xi$ be a set of intermediate points for the partition $T$.
Then
\begin{multline*}
    \normb{\Pi^{\pm1}\rk{T^{\rk{1}},\xi^{\rk{1}}}-\Pi^{\pm1}\rk{T^{\rk{2}},\xi^{\rk{2}}}}\\
    \leq\normb{\Pi^{\pm1}\rk{T^{\rk{1}},\xi^{\rk{1}}}-\Pi^{\pm1}\rk{T,\xi}}+\normb{\Pi^{\pm1}\rk{T,\xi}-\Pi^{\pm1}\rk{T^{\rk{2}},\xi^{\rk{2}}}}.
\end{multline*}
From this and the symmetric form of \eqref{E5.3-0725} it follows that it suffices to convince of the smallness of
\[
\normb{\Pi\rk{T^{\rk{1}},\xi^{\rk{1}}}-\Pi\rk{T,\xi}}.
\]

Let
\begin{gather*}
    T^{\rk{1}}\colon a\eqdef t_0<t_1<\dotsb<t_n\defeq b,
    \qquad\xi^{\rk{1}}\defeq\set{\xi_j}_{j=1}^n;\\
    T\colon t_{j-1}\eqdef t^{\rk{j}}_0<t^{\rk{j}}_1<\dotsb<t^{\rk{j}}_{k_j}\defeq t_j,
    \qquad j=1,2,\dotsc,n;\\
    \xi\defeq\set{\xi^{\rk{j}}_{p_j}}_{j=1,p_j=1}^{n,k_j},
    \qquad t^{\rk{j}}_{p_j-1}\leq\xi^j_{p_j}\leq t^{\rk{j}}_{p_j};\\
    \Dl^{\rk{j}}_{p_j}\defeq H\rk{t^{\rk{j}}_{p_j}}-H\rk{t^{\rk{j}}_{p_j-1}}.
\end{gather*}
From inequality \eqref{E5.8-0725} we get
\beql{E5.12-0805}
\normb{\Pi\rk{T^{\rk{1}},\xi^{\rk{1}}}-\Pi\rk{T,\xi}}
\leq\mu_n\sum_{j=1}^n\normb{\ec^{f\rk{\xi_j}\Dl_j}-\prodr\limits_{p_j=1}^{k_j}\ec^{f\rk{\xi^{\rk{j}}_{p_j}}\Dl^{\rk{j}}_{p_j}}},
\eeq
where
\begin{multline}\label{E5.13-0805}
    \mu_n
    \defeq\max_j\Biggl\{\norm{\ec^{f\rk{\xi_1}\Dl_1}}\norm{\ec^{f\rk{\xi_2}\Dl_2}}\dotsm\norm{\ec^{f\rk{\xi_{j-1}}\Dl_{j-1}}}\\
    \normb{\prodr\limits_{p_{j+1}=1}^{k_{j+1}}\ec^{f\rk{\xi^{\rk{j+1}}_{p_{j+1}}}\Dl^{\rk{j+1}}_{p_{j+1}}}}\dotsm\normb{\prodr\limits_{p_n=1}^{k_n}\ec^{f\rk{\xi^{\rk{n}}_{p_n}}\Dl^{\rk{n}}_{p_n}}}\Biggr\}.
\end{multline}
From \eqref{E5.4-0725}, \eqref{E5.10-0805}, and \eqref{E5.11-0805} it follows that
\begin{align*}
    \norm{\ec^{f\rk{\xi_j}\Dl_j}}
    &\leq\ec^{\abs{f\rk{\xi_1}}\norm{\Dl_j}}
    \leq\ec^{KL\rk{t_j-t_{j-1}}},\\
    \normb{\prodr\limits_{p_{j}=1}^{k_{j}}\ec^{f\rk{\xi^{\rk{j}}_{p_{j}}}\Dl^{\rk{j}}_{p_{j}}}}
    &\leq\ec^{\sum_{p_j=1}^{k_j}\abs{f\rk{\xi^{\rk{j}}_{p_j}}}\norm{\Dl_{p_j}^{\rk{j}}}}\\
    &\leq\ec^{KL\sum_{p_j=1}^{k_j}\rk{t^{\rk{j}}_{p_j}-t^{\rk{j}}_{p_{j-1}}}}
    =\ec^{KL\rk{t_j-t_{j-1}}}.
\end{align*}
Using these estimates, we obtain from \eqref{E5.13-0805} that
\beql{E5.14-0805}
\mu_n\leq M,
\qquad M\defeq\ec^{KL\rk{b-a}}.
\eeq

For estimating the $j$\nobreakdash-th term of the sum on the right side of inequality \eqref{E5.12-0805}, we consider the representation
\begin{gather}
    \ec^{f\rk{\xi_j}\Dl_j}
    =\Iu{r}+f\rk{\xi_j}\Dl_j+R_j,\label{E5.15-0805}\\
    \prodr\limits_{p_j=1}^{k_j}\ec^{f\rk{\xi^{\rk{j}}_{p_j}}\Dl^{\rk{j}}_{p_j}}
    =\Iu{r}+\sum_{p_j=1}^{k_j}f\rk{\xi^{\rk{j}}_{p_j}}\Dl^{\rk{j}}_{p_j}+R^{\rk{j}}_{p_j}\label{E5.16-0805}
\end{gather}
where, in view of \eqref{E5.6-0725}, \eqref{E5.10-0805}, and \eqref{E5.11-0805}, we have
\[
\norm{R^{\rk{j}}_{p_j}}\leq\frac{1}{2}\rho_{k_j}^2\ec^{\rho_{k_j}},
\qquad j=1,2,\dotsc,n,
\]
where
\[
\rho_{k_j}
\defeq\sum_{p_j=1}^{k_j}\absb{f\rk{\xi^{\rk{j}}_{p_j}}}\norm{\Dl^{\rk{j}}_{p_j}}
\leq KL\sum_{p_j=1}^{k_j}\rk{t^{\rk{j}}_{p_j}-t^{\rk{j}}_{p_j-1}}
=KL\rk{t_j-t_{j-1}}.
\]
Thus,
\beql{E5.17-0805}\begin{split}
    \norm{R^{\rk{j}}_{p_j}}
    &\leq\frac{1}{2}K^2L^2\rk{t_j-t_{j-1}}^2\ec^{KL\rk{t_j-t_{j-1}}}\\
    &\leq\frac{1}{2}K^2L^2\rk{t_j-t_{j-1}}\ec^{KL\rk{b-a}}\la\rk{T^{\rk{1}}}\\
    &=\frac{1}{2}K^2L^2M\rk{t_j-t_{j-1}}\la\rk{T^{\rk{1}}},
    \qquad j=1,2,\dotsc,n.
\end{split}\eeq
Analogously, we obtain
\beql{E5.18-0805}
\norm{R_j}
\leq\frac{1}{2}K^2L^2M\rk{t_j-t_{j-1}}\la\rk{T^{\rk{1}}},
\qquad j=1,2,\dotsc,n.
\eeq
Inserting the expressions \eqref{E5.15-0805} and \eqref{E5.16-0805} in the $j$\nobreakdash-th term of the sum in \eqref{E5.12-0805} and taking into account \eqref{E5.14-0805}, \eqref{E5.17-0805}, and \eqref{E5.18-0805}, we obtain
\[\begin{split}
    &\normb{\Pi\rk{T^{\rk{1}},\xi^{\rk{1}}}-\Pi\rk{T,\xi}}\\
    &\leq M\sum_{j=1}^n\rkb{\normb{f\rk{\xi_j}\Dl_j-\sum_{p_j=1}^{k_j}f\rk{\xi^{\rk{j}}_{p_j}}\Dl^{\rk{j}}_{p_j}}+\norm{R_j}+\norm{R_{j,p_j}}}\\
    &\leq M\sum_{j=1}^n\rkb{\normb{\sum_{p_j=1}^{k_j}\rkb{f\rk{\xi_j}-f\rk{\xi^{\rk{j}}_{p_j}}}\Dl^{\rk{j}}_{p_j}}+K^2L^2M\rk{t_j-t_{j-1}}\la\rk{T^{\rk{1}}}}\\
    &\leq ML\sum_{j=1}^n\sum_{p_j=1}^{k_j}\absb{f\rk{\xi_j}-f\rk{\xi^{\rk{j}}_{p_j}}}\rk{t^{\rk{j}}_{p_j}-t^{\rk{j}}_{p_j-1}}+K^2L^2M^2\rk{b-a}\la\rk{T^{\rk{1}}}\\
    &\leq ML\sum_{j=1}^n\omega_j\rk{t_j-t_{j-1}}+K^2L^2M^2\rk{b-a}\la\rk{T^{\rk{1}}},
\end{split}\]
where $\omega_j$ is the oscillation of the function $f$ on the interval $[t_{j-1},t_j]$.
It remains to apply the criterion for the Riemann integrability of a function in terms of the oscillation.
\eproof

Similarly (however, much simpler), it can be proved under these conditions of the existence of the additive integral
\beql{E5.19-0805}
\int_a^bf\rk{t}\dif H\rk{t}.
\eeq

The Lipschitz condition for $H\rk{t}$ in \rthm{T5.5-0805} is too arduous.
It can be considerably weakened by requiring some condition for $f\rk{t}$.
Let, for example, the function $\vphi\rk{x}\in C\albe$ be strictly increasing and mapping the interval $\albe$ on the interval $\ab$ then the multiplicative integrals
\[
\intr\limits_a^b\ec^{f\rk{t}\dif H\rk{t}},
\qquad\intr\limits_\al^\be\ec^{f_1\rk{x}\dif H_1\rk{x}},
\]
where $f_1\rk{x}=f\rk{\vphi\rk{x}}$, $H_1\rk{x}=H\rk{\vphi\rk{x}}$, $x\in\albe$, exist simultaneously and are equal.
This follows from the fact that the limit procedures of
\[
\max\rk{t_j-t_{j-1}}\to0
\qquad\text{and}\qquad
\max\rk{x_j-x_{j-1}}\to0
\]
are equivalent.
It is clear that
\[
\bigvee_a^bH
=\bigvee_\al^\be H_1,
\]
and that a countable set by the mapping $t=\vphi\rk{x}$ is transformed again into a countable set and nothing can be said about sets of zero measure.

We show now that outgoing from a continuous operator function $H\rk{t}$ of bounded variation one can always pass over to the function $H_1\rk{x}=H\rk{\vphi\rk{x}}$ satisfying a Lipschitz condition.

We consider the function $v\colon\ab\to\R$ given by
\beql{E5.20-0829}
v\rk{t}=\bigvee_a^tH,\qquad a\leq t\leq b.
\eeq
Obviously,
\[
v\rk{t''}-v\rk{t'}=\bigvee_{t'}^{t''}H,\qquad t',t''\in\ab,\;t'<t'',
\]
where $v\rk{t}$ is continuous simultaneously with $H\rk{t}$.
We introduce a new variable by setting
\[
x=v\rk{t}+t,\qquad t\in\ab.
\]
The function $x=x\rk{t}$ is continuous, strictly increasing and maps the interval $\ab$ on the interval $\albe$, where
\[
\al=a,\qquad\be=v\rk{b}+b=\bigvee_a^bH+b.
\]

Let $t=\vphi\rk{x}$, $x\in\albe$, be the function inverse with respect to the function $x=x\rk{t}$, $t\in\ab$.
The operator function
\[
H_1\rk{x}=H\rk{\vphi\rk{x}},\qquad x\in\albe,
\]
satisfies a Lipschitz condition.
Indeed, if $x',x''\in\albe$, $x'<x''$, and $t'=\vphi\rk{x'}$, $t''=\vphi\rk{x''}$, then
\[\begin{split}
    \normb{H_1\rk{x''}-H_1\rk{x'}}
    =\normb{H\rk{t''}-H\rk{t'}}
    &\leq v\rk{t''}-v\rk{t'}\\
    &\leq x\rk{t''}-x\rk{t'}
    =x''-x'.
\end{split}\]
Thus, we have the following statement.

\bthml{T5.6-0829}
Let $f\rk{t}$ be a bounded function on the interval $\ab$ having at most countable jumps and let $H\rk{t}$ be a continuous operator function on $\ab$ of bounded variation.
Then there exists the multiplicative integral \eqref{E5.9-0805}.
\ethm

Obviously under this conditions exists also the additive integral \eqref{E5.19-0805}.

The limit in the inequalities of \rlem{L5.2-0725} lead to the following estimates of the multiplicative integral.

\bleml{L5.7-0829}
It hold the following inequalities
\begin{align}
    \normb{\intr\limits_a^b\ec^{f\rk{t}\dif H\rk{t}}}&\leq\ec^\rho,\label{E5.21-0829}\\
    \normb{\intr\limits_a^b\ec^{f\rk{t}\dif H\rk{t}}-\Iu{r}}&\leq\rho\ec^\rho,\label{E5.22-0829}\\
    \normb{\intr\limits_a^b\ec^{f\rk{t}\dif H\rk{t}}-\rkb{\Iu{r}+\int_a^bf\rk{t}\dif H\rk{t}}}&\leq\frac{1}{2}\rho^2\ec^\rho,\label{E5.23-0829}
\end{align}
where $\rho\defeq\int_a^b\abs{f\rk{t}}\dif v\rk{t}$ and $v\rk{t}$ has the form \eqref{E5.20-0829}.
\elem

Now we mention the following easily checked properties of the multiplicative integral:
\begin{enuiard}
    \il{1.-0829} If $a<c<b$ then
    \beql{E2.24-0829}
    \intr\limits_a^b\ec^{f\rk{t}\dif H\rk{t}}
    =\intr\limits_a^c\ec^{f\rk{t}\dif H\rk{t}}\intr\limits_c^b\ec^{f\rk{t}\dif H\rk{t}}.
    \eeq
    \il{2.-0829} 
    \beql{E2.25-0829}
    \rkb{\intr\limits_a^b\ec^{f\rk{t}\dif H\rk{t}}}^\inv
    =\intl\limits_a^b\ec^{-f\rk{t}\dif H\rk{t}}.
    \eeq
\end{enuiard}

We mention a special but important case of \rthm{T5.6-0829}.

\bthml{T5.8-0829}
If $f\rk{t}=1$, $t\in\ab$, and $H\rk{t}$ is absolutely continuous on $\ab$, then the multiplicative integral \eqref{E5.9-0805} exists.
\ethm

For multiplicative integrals an analogous statement to the Helly theorem is true.

\bthml{T5.9-0829}
Let $\set{f_n\rk{t}}_{n=1}^\infty$ be a sequence of scalar functions on $\ab$ and $\set{H_n\rk{t}}_{n=1}^\infty$ be a sequence of operator functions on $\ab$, $H_n\rk{t}\in\ek{\cG}$, $a\leq t\leq b$, which satisfy the following conditions:
\begin{enuiap}{0}
    \il{1p-0829} The functions $f_n\rk{t}$, $n=1,2,\dotsc$, are Riemann integrable on $\ab$ and uniformly bounded on $\ab$:
    \beql{E5.26-0829}
    \sup_{\ab}\absb{f_n\rk{t}}\leq K,\qquad n=1,2,3,\dotsc
    \eeq
    \il{2p-0829} $\lim_{n\to\infty}f_n\rk{t}=f\rk{t}$ for each $t\in\ab$, where the function $f\rk{t}$ is Riemann integrable on $\ab$.
    \il{3p-0829} The functions $H_n\rk{t}$, $n=1,2,\dotsc$, satisfy a Lipschitz condition
    \beql{E5.27-0829}
    \normb{H_n\rk{t'}-H_n\rk{t''}}\leq L\abs{t'-t''},\qquad t',t''\in\ab,\;n=1,2,3,\dotsc,
    \eeq
    where the sequence $\set{H_n\rk{t}}_{n=1}^\infty$ converges pointwise on $\ab$ to the function $H\rk{t}$
\end{enuiap}
Then
\beql{E5.28-0829}
\lim_{n\to\infty}\intr\limits_a^b\ec^{f_n\rk{t}\dif H_n\rk{t}}
=\intr\limits_a^b\ec^{f\rk{t}\dif H\rk{t}}.
\eeq
\ethm
\bproof
First note that, in view of \rthm{T5.5-0805}, all integrals in \eqref{E5.28-0829} exist.

We partition the interval $\ab$ in equidistant parts by the points
\[
a
=x_0
<x_1
<x_2
<\dotsb
<x_n
=b.
\]
Let
\beql{E5.29-0829}
    K_j^{\rk{p}}
    =\intr\limits_{x_{j-1}}^{x_j}\ec^{f_p\rk{t}\dif H_p\rk{t}},\;
    L_j
    =\intr\limits_{x_{j-1}}^{x_j}\ec^{f\rk{t}\dif H\rk{t}},\qquad
    j=1,2,\dotsc,n;\;p=1,2,\dotsc
\eeq
Then taking into account \rlem{L5.4-0725}, we obtain
\beql{E5.30-0829}
\begin{split}
    \normb{\intr\limits_a^b\ec^{f_p\rk{t}\dif H_p\rk{t}}-\intr\limits_a^b\ec^{f\rk{t}\dif H\rk{t}}}
    &=\norm{K_1^{\rk{p}}K_2^{\rk{p}}\dotsm K_n^{\rk{p}}-L_1L_2\dotsm L_n}\\
    &\leq\mu_n\sum_{j=1}^n\norm{K_j^{\rk{p}}-L_j},
\end{split}
\eeq
where, in view of \eqref{E5.26-0829}, \eqref{E5.27-0829}, and \eqref{E5.29-0829}, we have
\beql{E5.31-0829}
\mu\leq M,\qquad M\defeq\ec^{KL\rk{b-a}}.
\eeq
We note that
\begin{multline}\label{E5.32-0829}
    \norm{K_j^{\rk{p}}-L_j}
    \leq\normb{K_j^{\rk{p}}-\rkb{\Iu{r}+\int_{x_{j-1}}^{x_j}f_p\rk{t}\dif H_p\rk{t}}}\\
    +\normb{\int_{x_{j-1}}^{x_j}f_p\rk{t}\dif H_p\rk{t}-\int_{x_{j-1}}^{x_j}f\rk{t}\dif H\rk{t}}\\
    +\normb{L_j-\rkb{\Iu{r}+\int_{x_{j-1}}^{x_j}f\rk{t}\dif H\rk{t}}}
\end{multline}
Taking into account \eqref{E5.26-0829} and \eqref{E5.27-0829}, we infer from the estimate \eqref{E5.23-0829} then
\beql{E5.33-0829}\begin{split}
    \normb{K_j^{\rk{p}}-\rkb{\Iu{r}+\int_{x_{j-1}}^{x_j}f_p\rk{t}\dif H_p\rk{t}}}
    &\leq\frac{1}{2}K^2L^2\rk{x_j-x_{j-1}}^2\ec^{KL\rk{x_j-x_{j-1}}}\\
    &\leq\frac{1}{2n^2}K^2L^2M.
\end{split}\eeq
We obtain an analogous estimate for the third term in \eqref{E5.32-0829}.
Returning to \eqref{E5.30-0829} and taking into account \eqref{E5.31-0829}--\eqref{E5.33-0829}, we get
\begin{multline*}
    \normb{\intr\limits_a^b\ec^{f_p\rk{t}\dif H_p\rk{t}}-\intr\limits_a^b\ec^{f\rk{t}\dif H\rk{t}}}
    \leq\mu_n\sum_{j=1}^n\norm{K_j^{\rk{p}}-L_j}\\
    \leq\frac{1}{n}K^2L^2M^2+M\sum_{j=1}^n\normb{\int_{x_{j-1}}^{x_j}f_p\rk{t}\dif H_p\rk{t}-\int_{x_{j-1}}^{x_j}f\rk{t}\dif H\rk{t}}.
\end{multline*}
Now fixing $n$ and increasing $p$ the second summand can be made arbitrarily small in view of the Helly theorem for the additive integral.
\eproof

\subsection{Multiplicative Lebesgue integral}\label{subsubsec5.1.3-0912}

Let $M\rk{t}$, $t\in\ab$, be a Lebesgue integrable operator function.
By $H\rk{t}$ we denote the Lebesgue integral with variable upper border
\[
H\rk{t}=\int_0^tM\rk{\tau}\dif\tau,\qquad t\in\ab.
\]
As it is known $H\rk{t}$ is absolutely continuous, $\frac{\dif H}{\dif t}\rk{t}=M\rk{t}$ \tae{}\ on $\ab$ and $\bigvee_a^bH=\int_a^b\norm{M\rk{x}}\dif x$.

But then from \rthm{T5.8-0829} it follows the existence of the multiplicative Stieltjes integral
\[
\intr\limits_a^b\ec^{\dif H\rk{t}}
=\intr\limits_a^b\ec^{\dif\{\int_a^tM\rk{x}\dif x\}}.
\]
This integral we denote by
\[
\intr\limits_a^b\ec^{\dif M\rk{t}}
\defeq\intr\limits_a^b\ec^{\dif H\rk{t}}
\]
and call it multiplicative Lebesgue integral of the operator function $M\rk{t}$.

The multiplicative Lebesgue integral with variable border
\beql{E5.34-0912}
W\rk{t}=\intr\limits_a^t\ec^{M\rk{x}\dif x},\qquad t\in\ab.
\eeq
plays an important role.

\bthml{T5.10-0912}
Let $M\rk{t}$, $t\in\ab$, be a Lebesgue integrable operator-valued function.
Then the operator-valued function $W\rk{t}$ is absolutely continuous on $\ab$ when \tae{}\ on $\ab$ it holds
\beql{E5.35-0912}
\frac{\dif W}{\dif t}\rk{t}
=W\rk{t}M\rk{t}.
\eeq
\ethm
\bproof
For $t',t''\in\ab$, $t'<t''$, we have
\beql{E5.36-0912}
W\rk{t''}-W\rk{t'}
=\intr\limits_a^{t'}\ec^{M\rk{x}\dif x}\rkb{\intr\limits_{t'}^{t''}\ec^{M\rk{x}\dif x}-\Iu{r}}.
\eeq
Taking into account that in this case ($f\rk{t}\equiv1$, $H\rk{t}=\int_0^tM\rk{x}\dif x$) the inequalities \eqref{E5.21-0829}--\eqref{E5.23-0829} hold with constant
\beql{E5.37-0912}
\rho
=\int_a^b\normb{M\rk{x}}\dif x
\eeq
then from \eqref{E5.28-0829} it follows
\[\begin{split}
    \normb{W\rk{t''}-W\rk{t'}}
    &\leq\ec^{\int_a^{t'}\norm{M\rk{x}}\dif x}\int_{t'}^{t''}\normb{M\rk{x}}\dif x\ec^{\int_{t'}^{t''}\norm{M\rk{x}}\dif x}\\
    &\leq C\rkb{v\rk{t''}-v\rk{t'}},
\end{split}\]
where $C\defeq\ec^{\int_{a}^{b}\norm{M\rk{x}}\dif x}$, $v\rk{t}=\int_{a}^{t}\norm{M\rk{x}}\dif x$.
Now the absolutely continuity of $W\rk{t}$ follows from the absolutely continuity of $v\rk{t}$.

Fixing $t$ we compute the right derivative $W_+'\rk{t}$, assuming that $\Dl t>0$.
Obviously
\beql{E5.38-0912}\begin{split}
    W\rk{t+\Dl t}-W\rk{t}
    &=W\rk{t}\rkb{\intr\limits_t^{t+\Dl t}\ec^{M\rk{x}\dif x}-\Iu{r}}\\
    &=W\rk{t}\rkb{\int_t^{t+\Dl t}M\rk{x}\dif x+R_{\Dl t}},
\end{split}\eeq
where
\[
R_{\Dl t}
=\intr\limits_t^{t+\Dl t}\ec^{M\rk{x}\dif x}-\rkb{\Iu{r}+\int_t^{t+\Dl t}M\rk{x}\dif x}.
\]
From the inequality \eqref{E5.23-0829} and taking into account \eqref{E5.37-0912} with the constant $\rho$, we get
\[
\frac{1}{\Dl t}\norm{R_{\Dl t}}
\leq\frac{1}{2}\ekb{\frac{1}{\Dl t}\int_t^{t+\Dl t}\normb{M\rk{x}}\dif x}^2\ec^{\int_t^{t+\Dl t}\normb{M\rk{x}}\dif x}\Dl t.
\]
From this it follows
\[
\lim_{\Dl t\to0}\frac{1}{\Dl t}R_{\Dl t}
=0.
\]
Returning to \eqref{E5.38-0912} we obtain now that \tae{}\ on $\ab$ it holds
\[\begin{split}
    W_+'\rk{t}
    &=\lim_{\Dl t\to+0}\frac{W\rk{t+\Dl t}-W\rk{t}}{\Dl t}\\
    &=\lim_{\Dl t\to+0}W\rk{t}\rkb{\frac{1}{\Dl t}\int_t^{t+\Dl t}M\rk{x}\dif x+\frac{1}{\Dl t}R_{\Dl t}}
    =W\rk{t}M\rk{t}.
\end{split}\]
Since $W\rk{t}$ is absolutely continuous then $W'\rk{t}=W_+'\rk{t}$ \tae{}\ on $\ab$.
This means the inequality \eqref{E5.27-0829} is satisfied \tae{}\ on $\ab$.
\eproof

Hence, the multiplicative integral
\beql{E5.39-0912}
W\rk{t}=\intr\limits_a^t\ec^{M\rk{x}\dif x},\qquad t\in\ab,
\eeq
with Lebesgue integrable operator-valued function $M\rk{t}$ on the interval $\ab$ is a solution of the Cauchy problem
\beql{E5.40-0912}
\begin{cases}
    \frac{\dif W}{\dif t}\rk{t}=W\rk{t}M\rk{t},&t\in\ab\\
    W\rk{a}=\Iu{r}
\end{cases}.
\eeq
As it is known (see Naimark, Linear differential operators, \cch{I}.) this Cauchy problem has a unique solution in the class of absolutely continuous functions.

By integration we get that the multiplicative integral \eqref{E5.39-0912} is the unique solution of the integral equation
\beql{E5.41-0912}
W\rk{t}=\Iu{r}+\int_a^tW\rk{x}M\rk{x}\dif x,\qquad t\in\ab.
\eeq

\subsection{Example}\label{subsec5.4-1018}
In the space $L_2\ek{0,\ell}$ we consider the operator (see \zitaa{MR0322542}{\cch{I}})
\beql{E5.42-0912}
Af\rk{x}
=\iu\int_x^\ell K\rk{x,t}f\rk{t}\dif t,
\eeq
where
\begin{multline}\label{E5.43-0912}
    K\rk{x,t}=\ko{\xi_1\rk{x}}\xi_1\rk{t}+\dotsb+\ko{\xi_p\rk{x}}\xi_p\rk{t}\\
    -\ko{\xi_{p+1}\rk{x}}\xi_{p+1}\rk{t}-\dotsb-\ko{\xi_{p+q}\rk{x}}\xi_{p+q}\rk{t},\\\xi_j\rk{x}\in L_2\ek{0,\ell},\;j=1,2,\dotsc,p+q.
\end{multline}
We set
\[
\xi\rk{x}=\Mat{\xi_1\rk{x}\\\xi_2\rk{x}\\\vdots\\\xi_r\rk{x}},
\qquad J=\Mat{\Iu{p}&0\\0&-\Iu{q}},
\qquad r=p+q.
\]
Thus, \eqref{E5.43-0912} and \eqref{E5.41-0912} can be rewritten in the form
\begin{gather}
    K\rk{x,t}=\xi^\ad\rk{x}J\xi\rk{t},\notag\\
    Af\rk{x}=\iu\int_x^\ell\xi^\ad\rk{x}J\xi\rk{t}f\rk{t}\dif t.\label{E5.44-0912}
\end{gather}
Since
\beql{E5.45-0912}
A^\ad f\rk{x}
=-\iu\int_0^x\xi^\ad\rk{x}J\xi\rk{t}f\rk{t}\dif t,
\eeq
then
\beql{E5.46-0912}
\frac{A-A^\ad}{\iu}f\rk{x}
=\int_0^\ell\xi^\ad\rk{x}J\xi\rk{t}f\rk{t}\dif t.
\eeq

Let $\cH\defeq L_2\ek{0,\ell}$ and $\cG\defeq\C^r$.
We define the operator $\Phi\in\ek{\cH,\C^r}$ by setting
\beql{E5.47-0912}
\Phi f
\defeq\int_0^\ell\xi\rk{t}f\rk{t}\dif t.
\eeq
Then
\beql{E5.48-0912}
\Phi^\ad g\rk{x}=\xi^\ad\rk{x}g,\qquad g\in\C^r,
\eeq
and \eqref{E5.46-0912} can be rewritten in the form
\[
\frac{A-A^\ad}{\iu}
=\Phi^\ad J\Phi.
\]

Thus the quintuple
\beql{E5.49-0912}
\al
\defeq\ocol{\cH}{\C^r}{A}{\Phi}{J}
\eeq
is an operator colligation.

Let $S\rk{x,z}$, $x\in\ek{0,\ell}$, $z\neq0$, be a matrix function which is a solution of the Cauchy problem
\beql{E5.50-0912}
\begin{cases}
    \frac{\dif S\rk{x,z}}{\dif x}=\frac{\iu}{z}S\rk{x,z}\xi\rk{x}\xi^\ad\rk{x}J,&x\in\ek{0,\ell},\\
    S\rk{0,z}\equiv\Iu{r}.
\end{cases}
\eeq
Then
\beql{E5.51-0912}
S\rk{x,z}
=\Iu{r}+\frac{\iu}{z}\int_0^xS\rk{t,z}\xi\rk{t}\xi^\ad\rk{t}\dif tJ
\eeq
and (see \rsubsec{subsubsec5.1.3-0912})
\beql{E5.52-0912}
S\rk{x,z}
=\intr\limits_0^x\ec^{\frac{\iu}{z}\xi\rk{t}\xi^\ad\rk{t}\dif tJ}.
\eeq
We set
\beql{E5.53-0912}
\ze\rk{x,z}
\defeq-\frac{1}{z}S\rk{x,z}\xi\rk{x}.
\eeq
For $g\in\cG$ from \eqref{E5.45-0912}, \eqref{E5.48-0912}, \eqref{E5.51-0912}, and \eqref{E5.53-0912} we find
\[\begin{split}
    \rk{A^\ad-\ko z\IH}\ze^\ad\rk{x,z}g
    &=-\iu\xi^\ad\rk{x}J\int_0^x\xi\rk{t}\zeta^\ad\rk{t,z}\dif tg-\ko z\zeta^\ad\rk{x,z}g\\
    &=\xi^\ad\rk{x}\rkb{\frac{\iu}{z}J\int_0^x\xi\rk{t}\xi^\ad\rk{t}S^\ad\rk{t,z}\dif t+S^\ad\rk{x,z}}g\\
    &=\xi^\ad\rk{x}g
    =\Phi^\ad g\rk{x}.
\end{split}\]

From this by taking into account \eqref{E5.48-0912}, \eqref{E5.53-0912}, and \eqref{E5.51-0912} we get
\[\begin{split}
    \ipab{\Phi\rk{A-z\IH}^\inv\Phi^\ad Jg_1}{g_2}
    &=\ipab{\xi^\ad\rk{x}Jg_1}{\rk{A^\ad-\ko{z}\IH}^\inv\xi^\ad\rk{x}g_2}\\
    &=\ipab{\xi^\ad\rk{x}Jg_1}{\zeta^\ad\rk{x,z}g_2}\\
    &=\int_0^\ell\ipab{\zeta\rk{x,z}\xi^\ad\rk{x}Jg_1}{g_2}\dif x\\
    &=\ipab{-\frac{1}{z}\int_0^\ell S\rk{x,z}\xi\rk{x}\xi^\ad\rk{x}\dif xJg_1}{g_2}\\
    &=\ipab{\iu\rkb{S\rk{\ell,z}-\Iu{m}}g_1}{g_2}_\cG.
\end{split}\]
This means
\[
S_\al\rk{z}
=S\rk{\ell,z}
\]
and, in view of \eqref{E5.48-0912}, then
\beql{E5.54-0912}
S_\al\rk{z}
=\intr\limits_0^\ell\ec^{\frac{\iu}{z}\xi\rk{t}\xi^\ad\rk{t}\dif tJ}.
\eeq

If $p=1$, $q=0$, $\xi\rk{x}\equiv1$ on $\ek{0,\ell}$, then the operator $A$ is the Leibniz--Newton integral operator (see \rsubsec{subsec3.5-0912}) and in this case \eqref{E5.54-0912}
\[
S_\al\rk{z}
=\ec^{\frac{\iu}{z}\ell}.
\]

In the case $q=0$ (\tie{}\ $J=\Iu{r}$) as it was shown in \zitaa{MR0322542}{\cch{I}} the colligation \eqref{E5.49-0912} is simple if and only if the vector $\xi\rk{x}$ is \tae{}\ different from zero.

\section{Multiplicative representation of characteristic function}\label{sec6-0917}

\subsection{Potapov's Factorization Theorem for characteristic functions of class $\Om_J\rk{\C^r} $}
In his known paper \cite{MR0076882} V.~P.~Potapov investigated the multiplicative structure of matrix functions which are meromorphic and \tJ{contractive} in the open unit disk.
The following special case of characteristic functions of class $\Om_J\rk{\C^r} $ is important for us.
In this case V.~P.~Potapov's Theorem has the following form (see \cite{MR0076882,MR0100793}).

\bthml{T6.1-0917}
Each function $S\rk{z}$ of the class $\Om_J\rk{\C^r} $ admits a representation in the form
\beql{E6.1-0917}
S\rk{z}
=\prodr\limits_{j=1}^\ome\rkb{\Iu{r}+\iu\frac{\eta_k\eta_k^\ad}{z-\lambda_k}J}\intr\limits_0^\ell\ec^{\iu\frac{\dif E(t)}{z-a\rk{t}}J},
\eeq
where $\ome\leq\infty$, $\set{\eta_k}_{k=1}^\ome$ are column vectors from $\C^r$, $a(t)$ and $E(t)$ are non-decreasing on $[0,\ell]$ scalar and matrix functions, respectively, where
\[
\eta_k^\ad J\eta_k=2\im \lambda_k,\; k=1,2,3,\dotsc;\;\sum_{k=1}^\ome\eta_k^\ad\eta_k<+\infty,\; \tr E(t)=t,\, t\in[0,\ell].
\]
Conversely, every function of the form \eqref{E6.1-0917} belongs to $\Om_J\rk{\C^r} $.
\ethm

In \cite{MR0100793} the multiplicative representation \eqref{E6.1-0917} was obtained with methods of operator theory.
Below in \rsubsecsd{subsec6.2-0917}{subsec6.3-0917} (see \rthm{T6.6A-0925}) we indicate basic features of this approach.

\subsection{Approximation theorems}\label{subsec6.2-0917}
Let $S\rk{z}$ belong to the class $\Om_J\rk{\C^r} $ and let
\[
\al
\defeq\ocol{\cH}{\C^r}{A}{\Phi}{J}
\]
be a simple operator colligation such that
\[
S_\al\rk{z}
=S\rk{z}.
\]

\bthmnl{\cite{MR0100793}}{T6.2-0917}
Let
\[
\set{0}
=\cH_0
\subset\cH_1
\subset\cH_2
\subset\dotsb
\subset\cH_n
\subset\dotsb
\]
be an increasing sequence of subspaces of the space $\cH$, let $P_n$ be the orthoprojector from $\cH$ onto $\cH_n$, $n=1,2,3,\dotsc$ and let 
\[
S_n\rk{z}
\defeq\proj{S_\al\rk{z}}{\cH_n}.
\]
If the sequence $\set{P_n}_{n=1}^\infty$ strongly converges to $\IH$, then the sequence $S_n\rk{z}$ converges uniformly to $S_\al\rk{z}$ in some neighborhood of every non-real regular point of $S\rk{z}$.
\ethm

\bcornl{\cite{MR0100793}}{C6.3-0917}
Let the assumptions of the preceding theorem be satisfied and let the subspaces $\cH_n$, $n=1,2,3,\dotsc$ be invariant with respect to the operator $A$.
Then
\beql{E6.2-0917}
S\rk{z}
=\prodr\limits_{k=1}^\infty\proj{S_\al\rk{z}}{\cH_k\ominus\cH_{k-1}},
\eeq
where the infinite product \eqref{E6.2-0917} converges uniformly to $S\rk{z}$ in some neighborhood of any non-real regular point of $S\rk{z}$.
\ecor
\bproof
In the considered case, in view of \eqref{E2.56-0815}--\eqref{E2.57-0801}, we have
\[
\proj{S_\al\rk{z}}{\cH_n}
=\prodr\limits_{k=1}^n\proj{S_\al\rk{z}}{\cH_k\ominus\cH_{k-1}},
\]
and the assumption follows from the preceding theorem.
\eproof

\subsection{Multiplicative representation of characteristic functions of class $\Om_J\rk{\C^r} $}\label{subsec6.3-0917}
\subsubsection{The case of real spectrum}\label{subsubsec6.3.1-0917}
Let $S\rk{z}$ belong to the class $\Om_J\rk{\C^r} $ and let
\beql{E6.3-0917}
\al
\defeq\ocol{\cH}{\C^r}{A}{\Phi}{J}
\eeq
be a simple operator colligation such that
\beql{E6.4-0917}
S_\al\rk{z}
=S\rk{z}.
\eeq
We suppose that the whole spectrum of the operator $A$ is located at the real axis.
Let $\set{h_j}_{j=1}^q$ be an arbitrary orthonormal basis of the channel subspace $\ran{\Phi^\ad}$ and let $\set{h_j}_{j=q+1}^\infty$ be an arbitrary orthonormal basis of the orthogonal complement $\cH\ominus\ran{\Phi^\ad}$.
Let further $n>q$ and $\cH_n$ be the subspace generated by the vectors $\set{h_j}_{j=1}^n$ and
\[
S_n\rk{z}
=\proj{S_\al\rk{z}}{\cH_n}.
\]
Then, in view of \rthm{T4.25-1001}, it holds the representation \eqref{E4.50-1006}, namely
\beql{E6.5-0917}
S_\al\rk{z}
=\prodr\limits_{k=1}^n\rkb{\Iu{r}+\frac{\iu}{z-z_k^{\rk{n}}}\eta_k^{\rk{n}}{\eta_k^{\rk{n}}}^\ad J},
\eeq
where, taking into account $\ran{\Phi^\ad}\subset\cH_n$, we have
\begin{gather}
    {\eta_k^{\rk{n}}}^\ad J\eta_k^{\rk{n}}=2\im z_k^{\rk{n}},\label{E6.6-0917}\\
    \sum_{k=1}^n\eta_k^{\rk{n}}{\eta_k^{\rk{n}}}^\ad=\Phi\Phi^\ad,\label{E6.7-0917}\\
    2\sum_{k=1}^n\abs{\im z_k^{\rk{n}}}\leq\tr\Phi\Phi^\ad,\label{E6.8-0917}
\end{gather}
where $P_n$ is the orthoprojection from $\cH$ onto $\cH_n$.

We will suppose that the product \eqref{E6.4-0917} is formed in such way that
\beql{E6.9-0917}
a_1^{\rk{n}}
\leq a_2^{\rk{n}}
\leq \dotsb
\leq a_n^{\rk{n}},
\qquad a_k^{\rk{n}}=\re z_k^{\rk{n}},\;k=1,2,\dotsc,n.
\eeq
The positivity of such arrangement of the factors follows from (see \rsubsec{subsec4.8-0917}) the way of deriving representation \eqref{E6.5-0917} and does not depend on the choice of ordering the eigenvalues $\set{z_k^{\rk{n}}}_{k=1}^n$.

In view of \eqref{E6.4-0917} and \rthm{T6.2-0917}, we have
\beql{E6.10-0917}
S\rk{z}=\lim S_n\rk{z},
\qquad\im z\neq0.
\eeq

We consider the products
\begin{align}
    S_{\mathrm{Re},n}\rk{z}&\defeq\prodr\limits_{k=1}^n\rkb{\Iu{r}+\frac{\iu}{z-a_k^{\rk{n}}}\eta_k^{\rk{n}}{\eta_k^{\rk{n}}}^\ad J},\label{E6.11-0917}\\
    S_{\mathrm{Re},n}^{\rk{\mathrm{exp}}}\rk{z}&\defeq\prodr\limits_{k=1}^n\ec^{\frac{\iu}{z-a_k^{\rk{n}}}\eta_k^{\rk{n}}{\eta_k^{\rk{n}}}^\ad J}.\label{E6.12-0917}
\end{align}
Using \rlemss{L5.2-0725}{L5.4-0725} and the formulas \eqref{E6.6-0917}--\eqref{E6.8-0917}, as in the proof of \rthmss{T5.5-0805}{T5.9-0829} we obtain
\begin{gather*}
    \lim_{n\to\infty}\rkb{S_n\rk{z}-S_{\mathrm{Re},n}\rk{z}}=0,\qquad\im z\neq0,\\
    \lim_{n\to\infty}\rkb{S_{\mathrm{Re},n}\rk{z}-S_{\mathrm{Re},n}^{\rk{\mathrm{exp}}}\rk{z}}=0,\qquad\im z\neq0.
\end{gather*}
From this, taking into account \eqref{E6.10-0917}, it follows
\beql{E6.13-0917}
S\rk{z}=\lim_{n\to\infty}S_{\mathrm{Re},n}^{\rk{\mathrm{exp}}}\rk{z},\qquad\im z\neq0.
\eeq

We represent each of the products \eqref{E6.12-0917} in form of a multiplicative integral.
For this, taking into account \eqref{E6.7-0917}, we consider the interval $\ek{0,\ell}$, where
\beql{E6.14-0917}
\ell
=\sum_{k=1}^n\norm{\eta_k^{\rk{n}}}^2
=\sum_{k=1}^n{\eta_k^{\rk{n}}}^\ad\eta_k^{\rk{n}}
=\sum_{k=1}^n\tr\eta_k^{\rk{n}}{\eta_k^{\rk{n}}}^\ad
=\tr\Phi\Phi^\ad.
\eeq
We decompose the interval $\ek{0,\ell}$ in parts by the points
\[
t_0=0,\quad
t_k=\sum_{s=1}^k{\eta_s^{\rk{n}}}^\ad\eta_s^{\rk{n}},\qquad
k=1,2,\dotsc,n.
\]
We define a non-decreasing function $a_n\rk{t}$ on the interval $\ek{0,\ell}$ by setting
\beql{E6.15-0917}
a_n\rk{t}
=
\begin{cases}
    a_k^{\rk{n}},&\text{ if }t_{k-1}\leq t<t_k,\;k=1,2,\dotsc,n-1,\\
    a_n^{\rk{n}},&\text{ if }t_{n-1}\leq t\leq t_n,
\end{cases}
\eeq
and a non-decreasing operator function $E_n\rk{t}$ on the interval $\ek{0,\ell}$ by setting
\beql{E6.16-0917}
E_n\rk{t}
=
\begin{cases}
    \frac{t}{t_1}\eta_1^{\rk{n}}{\eta_1^{\rk{n}}}^\ad,&\text{ if }t_{0}\leq t\leq t_1,\\
    \sum_{s=1}^{k-1}\eta_s^{\rk{n}}{\eta_s^{\rk{n}}}^\ad+\frac{t-t_{k-1}}{t_k-t_{k-1}}\eta_k^{\rk{n}}{\eta_k^{\rk{n}}}^\ad,&\text{ if }t_{n-1}\leq t\leq t_n.
\end{cases}
\eeq
Then obviously
\beql{E6.17-0917}
S_{\mathrm{Re},n}^{\rk{\mathrm{exp}}}\rk{z}
=\intr\limits_0^\ell\ec^{\frac{\iu}{z-a_n\rk{t}}\dif E_n\rk{t}J}.
\eeq

From \eqref{E6.9-0917}, \eqref{E6.15-0917} (\tresp{}, \eqref{E6.7-0917}, \eqref{E6.16-0917}) it follow
\beql{E6.18-0925}
\absb{a_n\rk{t}}\leq\norm{A},
\quad E_n\rk{t}\leq\Phi\Phi^\ad,
\qquad t\in\ek{0,\ell},\;n=1,2,3,\dotsc
\eeq
Thus, one can chose subsequences $a_{n_j}\rk{t}$ and $E_{n_j}\rk{t}$ which converge on $\ek{0,\ell}$ to a non-decreasing function $a\rk{t}$ and a non-decreasing matrix function $E\rk{t}$, respectively.
Moreover, we have
\begin{gather}
    \normb{E_n\rk{t}}\leq\tr E_n\rk{t}=t,\label{E6.19-0925}\\
    \normb{E_n\rk{t''}-E_n\rk{t'}}\leq\tr E_n\rk{t''}-\tr E_n\rk{t'}=t''-t',\quad0\leq t'\leq t''\leq\ell.\label{E6.20-0925}
\end{gather}

Taking into account \eqref{E6.13-0917}, \eqref{E6.17-0917}, and \rthm{T5.9-0829}, we obtain
\beql{E6.21-0925}
S\rk{z}=\lim_{j\to\infty}\intr\limits_0^\ell\ec^{\frac{\iu}{z-a_{n_j}\rk{t}}\dif E_{n_j}\rk{t}J}=\intr\limits_0^\ell\ec^{\frac{\iu}{z-a\rk{t}}\dif E\rk{t}J},\qquad\im z\neq0.
\eeq
We denote  by $\cM$ the set of left and right limit points of the function $a\rk{t}$.
In \zitaa{MR0100793}{Appendix} it was shown that the set of singular points of the multiplicative integral \eqref{E6.21-0925} coincides with the set $\cM$.
Hence, the set $\cM$ coincides also with the set of singular points of the function $S\rk{z}$.

In view of \eqref{E6.19-0925} and \eqref{E6.20-0925}, we have
\beql{E6.22-E925}
\tr E\rk{t}=t,
\quad\normb{E\rk{t''}-E\rk{t'}}\leq t''-t',
\qquad t,t',t''\in\ek{0,\ell},\;t'<t''.
\eeq
This means, that there exists a Lebesgue integrable operator function $M\rk{t}$, $t\in\ek{0,\ell}$, such that
\[
E\rk{t}=\int_0^tM\rk{x}\dif x,\qquad t\in\ek{0,\ell}.
\]
Obviously,
\[
M\rk{t}\geq0,\quad\tr M\rk{t}=1,\qquad\text{\tae{}\ on }\ek{0,\ell}.
\]
Let $\rank M\rk{t}\leq p$ \tae{}\ on $\ek{0,\ell}$.
Then there exists a set of positive measure such that in all of its points the rank is equal to $p$.
From \eqref{E6.7-0917} and \eqref{E6.16-0917} it follows
\[
0\leq E\rk{t}\leq E\rk{\ell}=\Phi\Phi^\ad,\qquad t\in\ek{0,\ell}.
\]
From this it follows
\[
q=\rank\Phi\geq p.
\]
Hence, there exists a measurable $\xi\rk{t}$, $t\in\ek{0,\ell}$, with values in $\ek{\C^p,\C^r}$ such that
\[
M\rk{t}=\xi\rk{t}\xi^\ad\rk{t},\qquad\text{\tae{}\ on }\ek{0,\ell}.
\]
Thus,
\begin{gather}
    E\rk{t}=\int_0^t\xi\rk{x}\xi^\ad\rk{x}\dif x,\qquad t\in\ek{0,\ell},\label{E6.23-1001}\\
    \tr\xi\rk{t}\xi^\ad\rk{t}=1,\qquad\text{\tae{}\ on }\ek{0,\ell}.\notag
\end{gather}
From \eqref{E6.21-0925} and \eqref{E6.23-1001} it follows (see \rsubsec{subsubsec5.1.3-0912})
\beql{E6.24-1001}
S\rk{z}=\intr\limits_0^\ell\ec^{\frac{\iu}{z-a\rk{t}}\xi\rk{t}\xi^\ad\rk{t}J\dif t}.
\eeq

\bthmnl{\cite{MR0100793}}{T6.4-0925}
Let $S\rk{z}\in\Om_J\rk{\C^r} $ and let $\al$  be an operator colligation \eqref{E6.3-0917} such that $S_\al\rk{z}=S\rk{z}$.
Suppose that the spectrum of $A$ lies on the real axis.
Then the function $S\rk{z}$ admits the representation \eqref{E6.24-1001}, 
where $\ell=\tr\Phi\Phi^\ad$, $a\rk{t}$ is a scalar non-decreasing function on $\ek{0,\ell}$, 
$\xi\rk{t}$ is a measurable $r\times p$\nobreakdash-matrix function on $\ek{0,\ell}$ with $p\leq q=\rank\Phi\leq r$.
Moreover, there exists a set of positive measure on $\ek{0,\ell}$ such that $\rank\xi\rk{t}=p$ on its points and the conditions
\begin{gather}
    \tr\xi\rk{t}\xi^\ad\rk{t}=1\qquad\text{\tae{}\ on }\ek{0,\ell},\label{E6.25-0925}\\
    \int_0^\ell\xi\rk{t}\xi^\ad\rk{t}\dif t=\Phi\Phi^\ad\label{E6.26-0927}
\end{gather}
hold.
The representation \eqref{E6.24-1001} holds in all points $z\in\C\setminus\cM$, where $\cM$ is the set of singular points of the multiplicative integral \eqref{E6.24-1001}, which 
consists of the left and right limit points of the function $a\rk{t}$.
\ethm

\breml{R6.5-0925}
From \rthmss{T6.4-0925}{T4.20-0721} it follows that the spectrum of the operator $A$ (the fundamental operator of the colligation \eqref{E6.3-0917}) coincides with the set $\cM$.
\erem

\subsubsection{The case that the invariant subspaces corresponding to the complex eigenvalues generate the whole space}
Let $S\rk{z}$ belong to the class $\Om_J\rk{\C^r} $ and let
\beql{E6.26-0925}
\al
\defeq\ocol{\cH}{\C^r}{A}{\Phi}{J}
\eeq
be a simple operator colligation such that
\beql{E6.27-0925}
S_\al\rk{z}=S\rk{z}.
\eeq
We suppose that the space $\cH$ is the closure of the linear hull of the invariant subspaces $\cH_{w_k}$, $k=1,2,3,\dots,\ome$, $\ome\leq\infty$, formed by the non-real points $w_k$ of the spectrum of the operator $A$ (see \rthm{T4.16-0715}).
In the following we suppose that $\ome=\infty$.

Let
\beql{E6.28-0925}
\cH_n=\cH_{w_1}\dotplus\cH_{w_2}\dotplus\dotsb\dotplus\cH_{w_n},\qquad n=1,2,\dotsc
\eeq
In view of \eqref{E6.27-0925} and \rcor{C6.3-0917}, then
\beql{E6.29-0925}
S\rk{z}=\prod_{k=1}^\infty\proj{S_\al\rk{z}}{\cH_k\ominus\cH_{k-1}},\qquad\cH_0=\set{0},
\eeq
where the infinite product \eqref{E6.29-0925} converges uniformly to $S\rk{z}$ in the neighborhood of each regular point of $S\rk{z}$.
Taking into account that each of the subspaces $\cH_k\ominus\cH_{k-1}$, $k=1,2,3,\dots$, is finite dimensional and that the operator $A_k=\rstr{A}{\cH_k\ominus\cH_{k-1}}$ has only the eigenvalues $z_k$, applying \rthm{T4.25-1001}, we find that
\begin{multline*}
    \proj{S_\al\rk{z}}{\cH_k\ominus\cH_{k-1}}=\prodr\limits_{j=1}^{q_k}\rkb{\Iu{r}+\frac{\iu}{z-z_k}\eta_j^{\rk{k}}{\eta_j^{\rk{k}}}^\ad J},\\
    \eta_j^{\rk{k}}J{\eta_j^{\rk{k}}}^\ad=2\im z_k,\quad j=1,2,\dotsc,s_k;\;k=1,2,3,\dotsc,
\end{multline*}
where $q_k=\dim\rk{\cH_k\ominus\cH_{k-1}}$.
Thus, taking into account \eqref{E6.29-0925}, we obtain
\[
S\rk{z}=\prodr\limits_{k=1}^\infty\rkb{\Iu{r}+\frac{\iu}{z-\lambda_k}\eta_k\eta_k^\ad J},
\]
where
\begin{gather}
    \lambda_1=\lambda_2=\dotsb=\lambda_{s_1}=z_1,\;\lambda_{s_1+1}=\lambda_{s_1+2}=\dotsb=\lambda_{s_1+s_2}=z_2,\;\dotsc\notag\\
    \eta_k=\eta_k^{\rk{1}},\;k=1,2,\dotsc,s_1;\qquad\eta_{s_1+k}=\eta_k^{\rk{2}},\; k=1,2,\dotsc,s_2;\qquad\dotsc,\notag\\
    \eta_kJ{\eta_k}^\ad=2\im \lambda_k,\quad k=1,2,3,\dotsc,\label{E6.30-0925}
\end{gather}
where as it follows from \rthm{T4.25-1001},
\beql{E6.31-0925}
\sum_{k=1}^\infty\eta_k\eta_k^\ad=\Phi\Phi^\ad.
\eeq
Thus, we get the following result.

\bthmnl{\cite{MR0100793}}{T6.6-0925}
Let $S\rk{z}\in\Om_J\rk{\C^r} $ and let $\al$ be a simple operator colligation \eqref{E6.26-0925}, where the space $\cH$ is the closure of the linear hull of the invariant subspaces which correspond to the non-real points of the spectrum of the operator $A$.
Then the function $S\rk{z}$ admits the representation
\beql{E6.32-0925}
S\rk{z}=\prodr\limits_{k=1}^\infty\rkb{\Iu{r}+\frac{\iu}{z-\lambda_k}\eta_k\eta_k^\ad J},
\eeq
where $\set{\lambda_k}_{k=1}^\infty$ is the sequence of eigenvalues of the operator $A$ counted with their multiplicity, $\set{\eta_k}_{k=1}^\infty$ is a sequence of column vectors from the space $\C^r$, and equalities \eqref{E6.30-0925} and \eqref{E6.31-0925} are satisfied.
The representation \eqref{E6.32-0925} holds in all points $z\in\C\setminus\Lambda$, where $\Lambda$ is the closure of the set $\set{\lambda_k}_{k=1}^\infty$.
\ethm

\subsubsection{The general case}
Let $S\rk{z}$ belong to the class $\Om_J\rk{\C^r} $ and let
\beql{E6.33-0925}
\al\defeq\ocol{\cH}{\C^r}{A}{\Phi}{J}
\eeq
be a simple operator colligation such that
\beql{E6.34-0925}
S\rk{z}=S_\al\rk{z}.
\eeq

We denote by $\cH_1$ the closure of the linear hull of the invariant subspaces which correspond to the non-real points of the spectrum of the operator $A$.
Let $\cH_2=\cH\ominus\cH_1$.
With respect to the decomposition $\cH=\cH_1\oplus\cH_2$ the operator $A$ has the block representation
\[
A=\begin{pmatrix}A_1&A_{12}\\0&A_2\end{pmatrix},
\]
where $A_1\in\ek{\cH_1}$, $A_2\in\ek{\cH_2}$, $A_{12}\in\ek{\cH_2,\cH_1}$.
In view of \rthm{T3.7}, we have
\beql{E6.34A-0925}
\al=\al_1\al_2,
\eeq
where
\begin{gather*}
    \al_j=\rk{\cH_j,\C^r;A_j,\Phi_j,J},\qquad j=1,2,\\
    \Phi_j=\rstr{\Phi}{\cH_j},\qquad j=1,2,
\end{gather*}
where, taking into account \eqref{E6.34A-0925}, we obtain
\[
S\rk{z}=S_{\al_1}\rk{z}S_{\al_2}\rk{z}.
\]
Applying now the \rthmss{T6.6-0925}{T6.4-0925} to $S_{\al_1}\rk{z}$ and $S_{\al_2}\rk{z}$, \tresp{}, we obtain the following result.

\bthmnl{\cite{MR0100793}}{T6.6A-0925}
Let $S\rk{z}\in\Om_J\rk{\C^r} $ and let $\al$ be an operator colligation of the form \eqref{E6.33-0925}.
Then the function $S\rk{z}$ admits the representation
\beql{E6.35-0925}
S\rk{z}
=\prodr\limits_{k=1}^\infty\rkb{\Iu{r}+\frac{\iu}{z-\lambda_k}\eta_k\eta_k^\ad J}\intr\limits_0^\ell\ec^{\frac{\iu}{z-a\rk{t}}\xi\rk{t}\xi^\ad\rk{t}J\dif t},
\eeq
where the sequences $\set{\lambda_k}_{k=1}^\infty$, $\set{\eta_k}_{k=1}^\infty$ and the functions $a\rk{t}$ and $\xi\rk{t}$ satisfy the conditions of \rthmss{T6.6-0925}{T6.4-0925}, respectively.
\ethm

\breml{R6.7-0925}
In view of \eqref{E6.34A-0925}, \eqref{E2.29} and \eqref{E6.30-0925}, \eqref{E6.26-0927}, we have
\beql{E6.36-0925}
\sum_{k=1}^\infty\eta_k\eta_k^\ad+\int_0^\ell\xi\rk{t}\xi^\ad\rk{t}\dif t
=\Phi_1\Phi_1^\ad+\Phi_2\Phi_2^\ad
=\Phi\Phi^\ad.
\eeq
\erem

\section{Triangular model of a bounded linear operator which is simple with respect to a finite dimensional subspace}\label{S1551}
\subsection{The case of real spectrum}
Let $\cH$ be a Hilbert space and let the operator $A\in\ek{\cH}$ be simple with respect to the finite dimensional subspace $\cE\subset\cH$ (see \rdefn{D4.23-1015}).
We suppose that $\sigma\rk{A}\subset\R$.
We embed the operator $A$ in a simple colligation (see \rthm{T2.2} and \rrem{R2.3-0929})
\beql{E7.1-0928}
\al
\defeq\ocol{\cH}{\C^r}{A}{\Phi}{J},
\eeq
such that $\cE=\ran{\Phi^\ad}$.
From \rthm{T6.4-0925} it follows that the characteristic function $S_\al\rk{z}$ of the colligation $\al$ admits the representation \eqref{E6.24-1001}:
\beql{E7.2-0928}
S_\al\rk{z}
=\intr\limits_0^\ell\ec^{\frac{\iu}{z-a\rk{t}}\xi\rk{t}\xi^\ad\rk{t}J\dif t}.
\eeq
Here $a\rk{t}$ is a scalar non-decreasing function on $\ek{0,\ell}$, $\xi\rk{t}$ is a measurable $r\times p$\nobreakdash-matrix function on $\ek{0,\ell}$, where $p\leq q=\rank\Phi\leq r$, such that there exists a set of positive measure on $\ek{0,\ell}$ such that $\rank\xi\rk{t}=p$ on its points and the conditions (see \eqref{E6.25-0925} and \eqref{E6.26-0927})
\begin{gather}
    \tr\xi\rk{t}\xi^\ad\rk{t}=1\qquad\text{\tae{}\ on }\ek{0,\ell},\label{E7.3-1015}\\
    \int_0^\ell\xi\rk{t}\xi^\ad\rk{t}\dif t=\Phi\Phi^\ad\label{E7.4-1015}
\end{gather}
hold.

We consider now the space
\beql{E7.5-1021}
\hat{\cH}_\cm
\defeq L^2\rkb{\ek{0,\ell},\C^r},
\eeq
the elements of which are defined on $\ek{0,\ell}$ by the vector functions
\beql{E7.3A-0928}
h\rk{x}\defeq\col\setb{h_1\rk{x},h_2\rk{x},\dotsc,h_p\rk{x}},
\quad h_k\rk{x}\in L^2\ek{0,\ell},\;k=1,2,\dots,p,
\eeq
where the number $p$ is defined by the conditions of \rthm{T6.4-0925}.
The scalar product of the elements $h,g\in\hat{\cH}_\cm$ is defined in the following way
\beql{E7.3B-0928}
\ipa{h}{f}
\defeq\int_0^\ell\ipab{h\rk{x}}{f\rk{x}}_\C\dif x
=\int_0^\ell\sum_{k=1}^ph_k\rk{x}\ko{f_k\rk{x}}\dif x.
\eeq

We define in $\hat{\cH}_\cm$ the operator $\hat{A}_\cm$ by setting
\beql{E7.4-0928}
\rk{\hat{A}_\cm h}\rk{x}
\defeq a\rk{x}h\rk{x}+\iu\int_x^\ell\xi^\ad\rk{x}J\xi\rk{t}h\rk{t}\dif t.
\eeq
From the boundedness of the function $a\rk{x}$ on $\ek{0,\ell}$ and \eqref{E7.4-1015} it follows the boundedness of the operator $\hat{A}_\cm$.

It can be easily seen that
\beql{E7.9-0505}
\rk{\hat{A}_\cm^\ad h}\rk{x}
\defeq a\rk{x}h\rk{x}-\iu\int_0^x\xi^\ad\rk{x}J\xi\rk{t}h\rk{t}\dif t. 
\eeq
Thus,
\beql{E7.5-0928}
\rkb{\frac{\hat{A}_\cm-\hat{A}_\cm^\ad}{\iu}h}\rk{x}
=\int_0^\ell\xi^\ad\rk{x}J\xi\rk{t}h\rk{t}\dif t.
\eeq

We define the mapping $\hat{\Phi}_\cm\in\ek{\hat{\cH}_\cm,\C^r}$ by setting
\beql{E7.6-0928}
\hat{\Phi}_\cm h\defeq \int_0^\ell\xi\rk{t}h\rk{t}\dif t,
\qquad h\in\hat{\cH}_\cm.
\eeq
Then
\[
\rk{\hat{\Phi}_\cm^\ad g}\rk{x}=\xi^\ad\rk{x}g,\qquad g\in\C^r,\;x\in\ek{0,\ell},
\]
and \eqref{E7.5-0928} can be rewritten in the form
\beql{E7.11-1024}
\frac{1}{\iu}\rk{\hat{A}_\cm-\hat{A}_\cm^\ad}
=\hat{\Phi}_\cm^\ad J\hat{\Phi}_\cm.
\eeq
Thus, the quintuple
\beql{E7.7-0928}
\hat{\al}_\cm
\defeq\rk{\hat{\cH}_\cm,\C^r;\hat{A}_\cm,\hat{\Phi}_\cm,J}
\eeq
is an operator colligation.

Let the function $S\rk{x,z}$, $x\in\ek{0,\ell}$, $z\in\rho\rk{A}$, the values of which are linear operators in $\C^r$, be a solution of the Cauchy problem
\beql{E7.11-1018}
\begin{cases}
    \frac{\dif S\rk{x,z}}{\dif x}=\frac{\iu}{z-a\rk{x}}S\rk{x,z}\xi\rk{x}\xi^\ad\rk{x}J,&x\in\ek{0,\ell},\\
    S\rk{0,z}\equiv\Iu{r}.
\end{cases}
\eeq
Then
\beql{E7.12-1018}
S\rk{x,z}=\Iu{r}+\int_0^x\frac{\iu}{z-a\rk{t}}S\rk{t,z}\xi\rk{t}\xi^\ad\rk{t}\dif tJ
\eeq
and (see \rsubsec{subsubsec5.1.3-0912})
\[
S\rk{x,z}=\intr\limits_0^x\ec^{\frac{\iu}{z-a\rk{t}}\xi\rk{t}\xi^\ad\rk{t}J}.
\]
The Cauchy problem \eqref{E7.11-1018} and the integral equation \eqref{E7.12-1018} are analogous to the Cauchy problem \eqref{E5.50-0912} and the  integral equation \eqref{E5.51-0912}, respectively.
Now setting (see \eqref{E5.53-0912})
\beql{E7.13-1018}
\ze\rk{x,z}
\defeq-\frac{1}{z-a\rk{x}}S\rk{x,z}\xi\rk{x}
\eeq
and repeating the consideration of \rsubsec{subsec5.4-1018} we get
\beql{E7.16-1031}
S_{\hat{\al}_\cm}\rk{z}
=S\rk{\ell,z}
=\intr\limits_0^\ell\ec^{\frac{\iu}{z-a\rk{t}}\xi\rk{t}\xi^\ad\rk{t}J\dif t}.
\eeq
Thus,
\[
S_{\hat{\al}_\cm}\rk{z}
=S_\al\rk{z}.
\]
From this and \rthm{T3.4} we obtain the following result.

\bthmnl{\cite{MR0062955,MR0100793}}{T7.1-0928}
The operator colligation $\al$ (see \eqref{E7.1-0928}) is unitarily equivalent to the principal part of the colligation $\hat{\al}_\cm$.
\ethm


\bcorl{C7.2-0928}
Let the operator $A\in\ek{\cH}$ be simple with respect to a finite dimensional subspace and suppose  that it possesses real spectrum.
Then the operator $A$ is unitarily equivalent to the completely non-unitary part of the model operator $\hat{A}_\cm$ of form \eqref{E7.4-0928}.
\ecor

\subsection{The case that the invariant subspaces which correspond to the complex eigenvalues generate the whole space}

Let $\cH$ be a Hilbert space and let $A\in\ek{\cH}$ be simple with respect to the finite dimensional subspace $\cE\subset\cH$ (see \rdefn{D4.23-1015}).
We suppose that the space $\cH$ is the closure of the linear hull of the invariant subspaces formed by the non-real points of the spectrum of the operator $A$ (see \rthm{T4.16-0715}).
For definiteness we assume that there are infinitely many such points.
We embed the operator $A$ in a simple colligation (see \rthm{T2.2} and \rrem{R2.3-0929})
\beql{E7.14-1018}
\al
\defeq\ocol{\cH}{\C^r}{A}{\Phi}{J},
\eeq
such that $\cE=\ran{\Phi^\ad}$.
From \rthm{T6.6-0925} it follows that the characteristic function $S_\al\rk{z}$ of the colligation $\al$ admits the representation \eqref{E6.32-0925}

\beql{E7.15-1018}
S_\al\rk{z}=\prodr\limits_{k=1}^\infty\rkb{\Iu{r}+\frac{\iu}{z-\lambda_k}\eta_k\eta_k^\ad J}.
\eeq
Here $\set{\lambda_k}_{k=1}^\infty$ is the sequence of non-real eigenvalues of the operator $A$ counted with respect to their multiplicity and $\set{\eta_k}_{k=1}^\infty$ is a sequence of column vectors from the space $\C^r$ such that the equations
\begin{gather}
    \eta_k J\eta_k^\ad=2\im\lambda_k,\qquad k=1,2,\dotsc,\label{E7.16-1018}\\
    \sum_{k=1}^\infty\eta_k\eta_k^\ad=\Phi\Phi^\ad\label{E7.17-1018}
\end{gather}
hold.

We consider now the space
\beql{E7.17-1021}
\tilde{\cH}_\cm
\defeq\ell^2,
\eeq
the elements of which have the form
\beql{E7.21-1024}
h=\col\rk{h_1,h_2,\dotsc,h_n,\dotsc},\qquad h_n\in\C,\;\sum_{k=1}^\infty\abs{h_k}^2<+\infty,
\eeq
and the scalar product of the elements $h,f\in\ell^2$ is defined in the usual way
\[
\ipa{h}{f}
\defeq\sum_{k=1}^\infty h_k\ko{f_k}.
\]

We define the operator $\tilde{A}_\cm$ in $\ell^2$ by setting
\beql{E7.18-1018}
\rk{\tilde{A}_\cm h}_k
=\lambda_k h_k+\iu\sum_{\ell=k+1}^\infty\eta_k^\ad J\eta_\ell h_\ell,
\qquad k=1,2,3,\dotsc
\eeq
From the boundedness of the sequence $\set{\lambda_k}_{k=1}^\infty$ and \eqref{E7.14-1018} it follows the boundedness of the operator $\tilde{A}_\cm$.

It can be easily seen that
\beql{E7.19-1018}
\rk{\tilde{A}_\cm^\ad h}_k
=\ko{\lambda_k}h_k-\iu\sum_{\ell=1}^{k-1}\eta_k^\ad J\eta_\ell h_\ell,
\qquad k=1,2,3,\dotsc
\eeq
Thus taking into account \eqref{E7.13-1018}, we get
\beql{E7.20-1018}
\rkb{\frac{\tilde{A}_\cm-\tilde{A}_\cm^\ad}{\iu}h}_k
=\sum_{\ell=1}^\infty\eta_k^\ad J\eta_\ell h_\ell,
\qquad k=1,2,3,\dotsc
\eeq

We define the mapping $\tilde{\Phi}_m\in\ek{\ell^2,\C^r}$ by setting
\beql{E7.21-1018}
\tilde{\Phi}_\cm h
\defeq\sum_{\ell=1}^\infty\eta_\ell h_\ell.
\eeq
Then
\beql{E7.22-1018}
\rk{\tilde{\Phi}_\cm^\ad g}_k
=\eta_k^\ad g,
\qquad g\in\C^r,\;k=1,2,3,\dotsc
\eeq
and \eqref{E7.16-1018} can be rewritten in the form
\beql{E7.27-1024}
\frac{1}{\iu}\rk{\tilde{A}_\cm-\tilde{A}_\cm^\ad}
=\tilde{\Phi}_\cm^\ad J\tilde{\Phi}_\cm.
\eeq
Thus, the quintuple
\beql{E7.29-1031}
\tilde{\al}_\cm
\defeq\ocol{\tilde{\cH}_\cm}{\C^r}{\tilde{A}_\cm}{\tilde{\Phi}_\cm}{J}
\eeq
is an operator colligation.

We consider the sequence of matrix functions
\beql{E7.23-1018}
\begin{cases}
    S\rk{0,z}\equiv\Iu{r},\\
    S\rk{k,z}=\prodr\limits_{\ell=1}^k\rk{\Iu{r}+\frac{\iu}{z-\lambda_\ell}\eta_\ell\eta_\ell^\ad J},&k=1,2,3,\dotsc
\end{cases}
\eeq
The sequence $\set{S\rk{k,z}}_{k=1}^\infty$ satisfies the system of difference equations
\beql{E7.24-1018}
\begin{cases}
    S\rk{k+1,z}-S\rk{k,z}=\frac{\iu}{z-\lambda_{k+1}}S\rk{k,z}\eta_{k+1}\eta_{k+1}^\ad J,&k=0,1,2,\dotsc,\\
    S\rk{0,z}\equiv\Iu{r}.
\end{cases}
\eeq
Hence,
\beql{E7.25-1018}
S\rk{k,z}
=\Iu{r}+\sum_{\ell=1}^k\frac{\iu}{z-\lambda_\ell}S\rk{\ell-1,z}\eta_\ell\eta_\ell^\ad J,
\qquad k=1,2,\dotsc
\eeq
In view of \eqref{E7.23-1018}, we get
\beql{E7.26-1018}
S_\al\rk{z}
=S\rk{\infty,z}
=\lim_{k\to\infty}S\rk{k,z}.
\eeq
We note that the system of difference equations \eqref{E7.23-1018} and the identities \eqref{E7.24-1018} are discrete analogues of the Cauchy problem \eqref{E5.50-0912} (see also \eqref{E7.11-1018}) and the integral equation \eqref{E5.51-0912} (see also \eqref{E7.12-1018}), respectively.

Repeating the considerations of \rsubsec{subsec5.4-1018}, we set (see \eqref{E5.53-0912} and also \eqref{E7.13-1018})
\beql{E7.27-1018}
\zeta_k\rk{z}
\defeq-\frac{1}{z-\lambda_k}S\rk{k-1,z}\eta_k,
\qquad k=1,2,3,\dotsc
\eeq
Then the vector
\[
\zeta_g\rk{z}
\defeq\col\rkb{\ze_1^\ad\rk{z}g,\ze_2^\ad\rk{z}g,\dotsc,\ze_k^\ad\rk{z}g,\dotsc},
\qquad g\in\C^r,
\]
belongs to $\ell^2$.

From \eqref{E7.19-1018}, \eqref{E7.22-1018}, \eqref{E7.25-1018}, and \eqref{E7.27-1018} we find
\[\begin{split}
    \rkb{\rk{\tilde{A}^\ad_\cm-\ko{z}\Iu{\tilde{\cH}_\cm}}\ze_g\rk{z}}_k
    &=\ko{\lambda_k}\ze_k^\ad\rk{z}g-\iu\sum_{\ell=1}^{k-1}\eta_k^\ad J\eta_\ell\ze_\ell^\ad\rk{z}g-\ko{z}\ze_k^\ad\rk{z}g\\
    &=\eta_k^\ad\rkb{S^\ad\rk{k-1,z}+\sum_{\ell=1}^{k-1}\frac{\iu}{\ko{z}-\ko{\lambda_\ell}}J\eta_\ell\eta_\ell^\ad S^\ad\rk{\ell-1,z}}g\\
    &=\eta_k^\ad g
    =\rk{\tilde{\Phi}_\cm^\ad g}_k,\qquad k=1,2,3,\dotsc
\end{split}\]
From this by taking into account \eqref{E7.22-1018}, \eqref{E7.25-1018}, and \eqref{E7.27-1018} we infer for $g_1,g_2\in\C^r$ then
\[\begin{split}
    &\ipab{\tilde{\Phi}_\cm\rk{\tilde{A}_\cm-z\Iu{\tilde{\cH}_\cm}}^\inv\tilde{\Phi}_\cm^\ad Jg_1}{g_2}_{\C^r}
    =\ipab{\tilde{\Phi}_\cm^\ad Jg_1}{\rk{\tilde{A}_\cm^\ad-\ko{z}\Iu{\tilde{\cH}_\cm}}^\inv\tilde{\Phi}_\cm^\ad g_2}_{\ell^2}\\
    &=\ipab{\tilde{\Phi}_\cm^\ad Jg_1}{\ze_{g_2}\rk{z}}_{\ell^2}
    =\sum_{\ell=1}^\infty g_2^\ad\ze_\ell\rk{z}\eta_\ell^\ad Jg_1\\
    &=\ipab{-\sum_{\ell=1}^\infty\frac{1}{z-\lambda_\ell}S\rk{\ell-1,z}\eta_\ell\eta_\ell^\ad Jg_1}{g_2}_{\C^r}
    =\ipab{\iu\rkb{S\rk{\infty,z}-\Iu{r}}g_1}{g_2}_{\C^r}.
\end{split}\]
Thus
\[
S_{\tilde{\al}_\cm}\rk{z}
=S_\al\rk{z}.
\]
Thus
\beql{E7.35-1031}
S_{\tilde{\al}_\cm}\rk{z}
=\prodr\limits_{k=1}^\infty\rkb{\Iu{r}+\frac{\iu}{z-\lambda_k}\eta_k\eta_k^\ad J}.
\eeq
From \rthm{T7.7-1031} (see \rsubsec{subsec7.4-0505}) it follows that the colligation $\tilde{\al}_\cm$ is simple.
From this and \rthm{T3.4} we obtain the following result.

\bthmnl{\cite{MR0062955,MR0100793}}{T7.3-1024}
The operator colligation $\al$ (see \eqref{E7.1-0928}) is unitarily equivalent to the operator colligation $\tilde{\al}_\cm$.
\ethm

\bcorl{C7.4}
Let the operator $A\in\ek{\cH}$ be simple with respect to a finite dimensional subspace and suppose that the space $\cH$ is the closure of the linear hull of the invariant subspaces which correspond to the non-real points of the spectrum of the operator $A$.
Then the operator $A$ is unitarily equivalent to the model operator $\tilde{A}_\cm$ of form \eqref{E7.18-1018}.
\ecor

\subsection{The general case}
Let $\cH$ be a Hilbert space and let $A\in\ek{\cH}$ be simple with respect to the finite dimensional subspace $\cE\subset\cH$ (see \rdefn{D4.23-1015}).
We embed the operator $A$ in a simple colligation (see \rthm{T2.2} and \rrem{R2.3-0929})
\beql{E7.32-1024}
\al
\defeq\ocol{\cH}{\C^r}{A}{\Phi}{J},
\eeq
such that $\cE=\ran{\Phi^\ad}$.
The characteristic function $S_\al\rk{z}$ belongs to the class $\Om_J\rk{\C^r} $ and from 
\rthm{T6.6A-0925} it follows that $S_\al\rk{z}$ admits the representation \eqref{E6.35-0925}, namely
\beql{E7.33-1024}
S_\al\rk{z}
=\prodr_{k=1}^\infty\rkb{\Iu{r}+\frac{1}{z-\lambda_k}\eta_k\eta_k^\ad J}\intr_0^\ell\ec^{\frac{1}{z-a\rk{t}}\xi\rk{t}\xi^\ad\rk{t}J\dif t}.
\eeq
Here $\set{\lambda_k}_{k=1}^\infty$ is the sequence of non-real eigenvalues of the operator 
$A$ counted with respect to their multiplicity (we suppose that there are infinitely many of such eigenvalues), $\set{\eta_k}_{k=1}^\infty$ is a sequence of column vectors from the space $\C^r$ such that the equations (see \eqref{E6.30-0925})
\beql{E7.35-1024}
\eta_k J\eta_k^\ad
=2\im\lambda_k,
\qquad k=1,2,3,\dotsc
\eeq
hold.
Further, $a\rk{t}$ is a scalar non-decreasing function on $\ek{0,\ell}$, $\xi\rk{t}$ is a measurable $r\times p$\nobreakdash-matrix function on $\ek{0,\ell}$, where $p\leq r$, such that there exists a set of positive measure on $\ek{0,\ell}$ such that $\rank\xi\rk{t}=p$ on its points and the conditions (see \eqref{E6.25-0925} and \eqref{E6.36-0925})
\begin{gather}
    \tr\xi\rk{t}\xi^\ad\rk{t}=1,\qquad\text{\tae{}\ on }\ek{0,\ell},\label{E7.36-1024}\\
    \sum_{k=1}^\infty\eta_k\eta_k^\ad+\int_0^\ell\xi\rk{t}\xi^\ad\rk{t}\dif t
    =\Phi\Phi^\ad\label{E7.37-1024}
\end{gather}
are satisfied.

We consider now the space
\beql{E7.38-1024}
\cH_\cm
\defeq\tilde{\cH}_\cm\oplus\hat{\cH}_\cm
\eeq
where $\tilde{\cH}_\cm\defeq\ell^2$, $\hat{\cH}_\cm\defeq L^2\rk{\ek{0,\ell},\C^r}$ (see \eqref{E7.17-1021} and \eqref{E7.5-1021}), the elements $h_\cm$ of which have the form $h_\cm\defeq\rk{h,h\rk{x}}$ where the vector $h$ and the vector function $h\rk{x}$ have the form \eqref{E7.21-1024} and \eqref{E7.3A-0928}, respectively.

In the space $\cH_\cm$ we consider the operator $A_\cm h_\cm=g_\cm$ by setting
\beql{E7.39-1024}
\begin{cases}
    g_k\defeq\lambda_kh_k+\iu\sum_{\ell=k+1}^\infty\eta_k^\ad J\eta_\ell h_\ell+\iu\int_0^\ell\eta_k^\ad J\xi\rk{t}h\rk{t}\dif t,\vspace{6pt}\\
    g\rk{t}\defeq\al\rk{x}h\rk{x}+\iu\int_x^\ell\xi^\ad\rk{x}J\xi\rk{t}h\rk{t}\dif t.
\end{cases}
\eeq
We note that the operator $\Ga_\cm\in\ek{\hat{\cH}_\cm,\tilde{\cH}_\cm}$, of the form
\[
\rk{g}_k
=\iu\int_0^\ell\eta_k^\ad J\xi\rk{t}h\rk{t}\dif t,
\qquad k=1,2,3,\dots,
\]
where $h\rk{t}\in\hat{\cH}_\cm$ and $g\in\tilde{\cH}_\cm$, can be written in the form
\beql{E7.40-1024}
\Ga_\cm
=\iu\tilde{\Phi}_\cm^\ad J\hat{\Phi}_\cm,
\eeq
where the operators $\hat{\Phi}_\cm\in\ek{\hat{\cH}_\cm,\C^r}$ and $\tilde{\Phi}_\cm\in\ek{\tilde{\cH}_\cm,\C^r}$ have the form \eqref{E7.6-0928} and \eqref{E7.21-1018}, respectively.
Thus, from \eqref{E7.39-1024}, \eqref{E7.18-1018}, \eqref{E7.4-0928}, and \eqref{E7.40-1024} it follows that with respect to the decomposition \eqref{E7.38-1024} of the space $\cH_\cm$ the operator $A_\cm$ has the block form
\beql{E7.41-1024}
A_\cm
=\Mat{\tilde{A}_\cm&\Ga_\cm\\0&\hat{A}_\cm}
=\Mat{\tilde{A}_\cm&\iu\tilde{\Phi}_\cm^\ad J\hat{\Phi}_\cm\\0&\hat{A}_\cm}.
\eeq
From this, taking into account \eqref{E7.11-1024} and \eqref{E7.27-1024}, we get
\beql{E7.42-1024}\begin{split}
    \frac{1}{\iu}\rk{A_\cm-A_\cm^\ad}
    &=\Mat{\frac{1}{\iu}\rk{\tilde{A}_\cm-\tilde{A}_\cm^\ad}&\tilde{\Phi}_\cm^\ad J\hat{\Phi}_\cm\\\hat{\Phi}_\cm^\ad J\tilde{\Phi}_\cm&\frac{1}{\iu}\rk{\hat{A}_\cm-\hat{A}_\cm^\ad}}\\
    &=\Mat{\tilde{\Phi}_\cm^\ad J\tilde{\Phi}_\cm&\tilde{\Phi}_\cm^\ad J\hat{\Phi}_\cm\\\hat{\Phi}_\cm^\ad J\tilde{\Phi}_\cm&\hat{\Phi}_\cm^\ad J\hat{\Phi}_\cm}
    =\Phi_\cm^\ad J\Phi_\cm
\end{split}\eeq
where the operator $\Phi_\cm\in\ek{\cH_\cm,\C^r}$ with respect to the decomposition \eqref{E7.38-1024} has the block form
\beql{E7.46-1031}
\Phi_\cm
\defeq\rk{\tilde{\Phi}_\cm,\hat{\Phi}}.
\eeq
From \eqref{E7.42-1024} it follows that the quintuple
\beql{E7.43-1024}
\al_\cm
\defeq\ocol{\cH_\cm}{\C^r}{A_\cm}{\Phi_\cm}{J}
\eeq
is an operator colligation.

From \eqref{E7.41-1024} and \eqref{E7.46-1031} it follows that the operator colligation $\al_\cm$ can be represented as product of the colligations $\tilde{\al}_\cm$ and $\hat{\al}_\cm$ of the form \eqref{E7.29-1031} and \eqref{E7.7-0928}, respectively:
\beql{E7.48-1031}
\al_\cm
=\tilde{\al}_\cm\hat{\al}_\cm.
\eeq
From this, taking into account \rthm{T2.7} and the form of the characteristic functions $S_{\tilde{\al}_\cm}\rk{z}$, $S_{\hat{\al}_\cm}\rk{z}$, and $S_\al\rk{z}$ (see \eqref{E7.35-1031}, \eqref{E7.3A-0928}, and \eqref{E7.33-1024}, respectively), we get
\[\begin{split}
    S_{\al_\cm}\rk{z}
    &=S_{\tilde\al_\cm}\rk{z}S_{\hat\al_\cm}\rk{z}\\
    &=\prodr_{k=1}^\infty\rkb{\Iu{r}+\frac{1}{z-\lambda_k}\eta_k\eta_k^\ad J}\intr_0^\ell\ec^{\frac{1}{z-a\rk{t}}\xi\rk{t}\xi^\ad\rk{t}J\dif t}
    =S_\al\rk{z}.
\end{split}\]
From this and \rthm{T3.4} we obtain the following result.

\bthmnl{\cite{MR0062955,MR0100793}}{T7.5-1031}
The operator colligation $\al$ (see \eqref{E7.35-1031}) is unitarily equivalent to the principal part of the colligation $\al_\cm$ (see \eqref{E7.43-1024}).
\ethm

\bcorl{C7.6-1031}
Let the operator $A\in\ek{\cH}$ be simple with respect to a finite dimensional subspace.
Then the operator $A$ is unitarily equivalent to the completely non-unitary part of the model operator $A_\cm$ of the form \eqref{E7.39-1024}.
\ecor

\subsection{Redundant part of the triangular model}\label{subsec7.4-0505}
An important role for the description of the redundant part of the model colligation $\al_\cm$ of the form \eqref{E7.43-1024} (see \rdefn{D2.15-1031}) plays the block representation \eqref{E7.41-1024} of the model operator $A_\cm$ and the factorization generated by the representation \eqref{E7.48-1031}.

\bthmnl{\cite{MR0100793}}{T7.7-1031}
The additional part of the model colligation $\al_\cm$ (see \eqref{E7.43-1024}) coincides with the additional component of the colligation $\hat\al_\cm$ of form \eqref{E7.7-0928}.
\ethm
\bproof
The block representation \eqref{E7.41-1024} of the model operator $A_\cm$ is generated by the decomposition \eqref{E7.38-1024} of the model space $\cH_\cm$ as the orthogonal sum of the subspaces $\tilde\cH_\cm=\ell^2$ and $\hat\cH_\cm=L^2\rk{\ek{0,\ell},\C^r}$.
We denote by $e_k$ the vector of the subspace $\tilde\cH_\cm$, the $k$\nobreakdash-th coordinate of which is $1$ whereas all remaining coordinates are $0$.
The vectors $\set{e_k}_{k=1}^\infty$ form an orthonormal basis in $\tilde\cH_\cm$ and with respect to this basis the operator $\tilde{A}_\cm\defeq\rstr{A_\cm}{\tilde\cH_\cm}$ has the triangular representation
\begin{align*}
    \tilde{A}_\cm e_1&=\lambda_1e_1,\\
    \tilde{A}_\cm e_2&=a_{12}e_1+\lambda_2e_2,\\
    \tilde{A}_\cm e_3&=a_{13}e_1+a_{23}e_2+\lambda_3e_3,\\
    &\vdots
\end{align*}
where $a_{j\ell}=\iu\eta_j^\ad J\eta_\ell$ for $j<\ell$.

Let (see \rdefn{D2.15-1031})
\beql{E7.49-1031}
\cH_\cm
=\cH_{\al_\cm}\oplus\cH_{\al_\cm}^{\rk{0}}
\eeq
and let $P_0$ be the orthoprojection from $\cH_\cm$ onto the subspace $\cH_{\al_\cm}^{\rk{0}}$, which is an inner subspace of the redundant part of the colligation $\al_\cm$.

With respect to the decomposition \eqref{E7.49-1031} the operator $A_\cm$ is written as the orthogonal sum
\beql{E7.50-1031}
A_\cm
=A_{\al_\cm}\oplus A_{\al_\cm}^{\rk{0}},
\eeq
where $A_{\al_\cm}\in\ek{\cH_{\al_\cm}}$, $A_{\al_\cm}^{\rk{0}}\in\ek{\cH_{\al_\cm}^{\rk{0}}}$ and $A_{\al_\cm}^{\rk{0}}$ is a self-adjoint operator.

From \eqref{E7.50-1031} it follows $A_{\al_\cm}^{\rk{0}}P_0=A_\cm P_0=P_0A_\cm$ and
\[
A_{\al_\cm}^{\rk{0}}P_0e_1
=P_0A_\cm e_1
=\lambda_1e_1.
\]
Since $A_{\al_\cm}^{\rk{0}}$ is a self-adjoint operator and $\lambda_1\neq\ko{\lambda_1}$, it follows $P_0e_1=0$.
Analogously, we find
\[
A_{\al_\cm}^{\rk{0}}P_0e_2
=P_0A_\cm e_2
=P_0\rk{a_{12}e_1+\lambda_2e_2}
=\lambda_2P_0e_2
\]
and, thus, $P_0e_2=0$.
Continuing this procedure, we get $P_0h=0$ for each $h\in\tilde\cH_\cm$.
This means $\cH_{\al_\cm}^{\rk{0}}\subset\hat\cH_\cm$.
From this it follows
\beql{E7.52-1031}
\cH_{\al_\cm}^{\rk{0}}
\subset\cH_{\hat\al_\cm}^{\rk{0}},
\eeq
where $\cH_{\hat\al_\cm}^{\rk{0}}$ is the inner subspace of the redundant part of the colligation $\hat\al_\cm$.

On the other side, it is obvious that
\[
\rstr{\hat\Phi_\cm^\ad}{\cH_{\hat\al_\cm}^{\rk{0}}}
=0.
\]
From this and \eqref{E7.41-1024} we get
\[
\cH_{\hat\al_\cm}^{\rk{0}}
\subset\cH_{\al_\cm}^{\rk{0}}.
\]
Together with \eqref{E7.52-1031} this implies
\[
\cH_{\al_\cm}^{\rk{0}}
=\cH_{\hat\al_\cm}^{\rk{0}}.\qedhere
\]
\eproof

\subsection{Two appendices on dissipative operators}\label{subsec7.5-0503}
\subsubsection{Dissipative colligations}

\bdefnnl{\zitaa{MR0322542}{\cch{I}}}{D7.8-0519}
The colligation
\beql{E7.52-0503}
    \al
    \defeq\ocol{\cH}{\cG}{A}{\Phi}{J}
\eeq
is called \emph{dissipative}, if $J=\IG$.
\edefn

The fundamental operator of a dissipative colligation is dissipative because for an arbitrary vector $h\in\cH$ it holds
\[
    \ipab{\frac{A-A^\ad}{\iu}h}{h}
    =\ipa{\Phi^\ad\Phi h}{h}
    =\norm{\Phi h}^2
    \geq0.
\]
The converse assertion is not true.
Using the method presented in \rthm{T2.2}, one can easily construct a nondissipative colligation with dissipative fundamental operator.
However, setting in the proof of this theorem $\cG\defeq\clo{\ran{\im A}}$, we come to the following assertion.

\blemnl{\zitaa{MR0322542}{\cch{I}}}{L7.9-0503}
Each dissipative operator can be embedded in a dissipative colligation.
\elem

\bthmnl{\zitaa{MR0322542}{\cch{I}}}{T7.10-0503}
The channel subspace of a dissipative colligation $\al\defeq\ocol{\cH}{\cG}{A}{\Phi}{\IG}$ coincides with the subspace of non-Hermiteness $\clo{\ran{\im A}}$.
\ethm
\bproof
If $h\perp\ran{\im A}$, then
\[
    \norm{\Phi h}^2
    =\ip{\Phi^\ad\Phi h}{h}
    =\ip{\im A h}{h}
    =0,
\]
and, consequently, $h\perp\ran{\Phi^\ad}$.
Hence $\clo{\ran{\Phi^\ad}}\subseteq\ran{\im A}$.
This leads together with \eqref{E10.1} to the identity $\clo{\ran{\Phi^\ad}}=\clo{\ran{\im A}}$.
\eproof

\bcornl{\zitaa{MR0322542}{\cch{I}}}{C7.11-0503}
Necessary and sufficient for the simplicity of a dissipative colligation is that its fundamental operator is completely non-selfadjoint.
\ecor

From this and \rcor{C2.20-0519} it follows:

\bcornl{\zitaa{MR0322542}{\cch{I}}}{C7.12-0503}
Let $P'$ be the orthoprojection onto the invariant subspace $\cH'$ of the completely non-selfadjoint dissipative operator $A$.
Then the operator $\rstr{P'A}{\cH'}$ is completely non-selfadjoint.
\ecor

We note that in \zitaa{MR0322542}{\cch{I}} there is given a description of the class of characteristic functions of dissipative colligations.

\subsubsection{Volterra operators with one-dimensional imaginary part}
A completely continuous operator is called \emph{Volterra operator}, if its spectrum consists only of the point zero.
Let in the space $\cH$ be given a completely non-selfadjoint Volterra operator $A$
with a one-dimensional imaginary part:
\beql{E7.53-0503} 
\frac{1}{\iu}\rk{A-A^\ad}
    =\ell\ipa{h}{h_0}h_0,
    \qquad h\in\cH,\,\norm{h_0}=1,\;\ell\neq0.
\eeq
An example of such operator is the integral operator \eqref{E3.29-0801} from \rsubsec{subsec3.5-0912}.
If $\ell>0$, then the operator $A$ is dissipative.
Otherwise, the operator $-A$ is dissipative.
In this section we assume $\ell>0$.

It holds the following important assertion.

\bthmnl{\cite{MR0062955}}{T7.13-0503}
Each completely non-selfadjoint dissipative Volterra operator $A$ with one-dimensional imaginary part \eqref{E7.53-0503} is unitarily equivalent to the integration operator
\beql{E7.54-0503}
    \rk{\cI f}\rk{x}
    =\iu\int_x^\ell f\rk{t}\dif t
\eeq
in $L_2(0,\ell)$.
\ethm
\bproof
Let $\tau\in\C$ and $\tau=1$.
We define the operator $\Phi\in\ek{\cH,\C}$ by setting
\beql{E7.55-0503}
    \Phi h
    =\sqrt\ell\ipa{h}{h_0}\tau,
    \qquad h\in\cH.
\eeq
Then
\beql{E7.56-0503}
    \Phi^\ad \tau
    =\sqrt\ell h_0.
\eeq
From \eqref{E7.53-0503} and \eqref{E7.54-0503}, \eqref{E7.55-0503} it follows
\[
    \Phi^\ad\Phi h
    =\ell\ipa{h}{h_0}h_0
    =\im Ah,
    \qquad h\in\cH.
\]
Hence, the quintuple
\beql{E7.57-0503}
    \al
    \defeq\ocol{\cH}{\C}{A}{\Phi}{\IC}
\eeq
is a dissipative colligation.
From the complet non-selfadjointy of the operator $A$ it follows the simplicity of the colligation $\al$.

In the considered case from \eqref{E7.55-0503} and \eqref{E7.56-0503} we get
\beql{E7.58-0503}
    \Phi\Phi^\ad
    =\ip{\Phi\Phi^\ad \tau}{\tau}
    =\norm{\Phi^\ad \tau}^2
    =\ell.
\eeq

The function $S_\al\rk{z}$ belongs to the class $\Om_{\IC}\rk{\C}$ and, as it follows from \rthm{T6.6A-0925}, it admits the representation \eqref{E6.35-0925} with $r=1$.
We note that in view of \rcor{C7.12-0503} zero can not be an eigenvalue of the operator $A$.
Moreover, from the dissipativity and the Volterra property of the operator $A$ it follows, that in the representation \eqref{E6.35-0925} the discrete product is missing 
and for each $t\in[0,\ell]$:
$\al\rk{t}=0, \ \xi\rk{t}=1$.
So, the representation \eqref{E6.35-0925} for the function $S_\al\rk{z}$ has the form\ednote{$\ec^{\iu\frac{\ell}{z}}$?}
\beql{E7.58A-0503}
    S_\al\rk{z}
    =\ec^{\iu\frac{\ell}{z}}.
\eeq
From \rthm{T7.5-1031} it follows that in the given case the model space $\cH_\cm$ (see \eqref{E7.38-1024}) is the space $L^2[0,\ell]$, whereas the model operator $A_\cm$ (see \eqref{E7.39-1024}) is the integration operator $\cI$.
The assertion of the theorem follows now from \rcor{C7.6-1031} and \rthm{T7.7-1031}.
Hence, the model colligation $\al_\cm$ (see \eqref{E7.43-1024}) coincides in the given case with the colligation $\hat{\al}$ (see \eqref{E3.34-0429}).
\eproof

We note that the unitarily equivalence of the colligations $\al$ and $\hat{\al}$ follows immediately from \eqref{E7.58A-0503}, \eqref{E3.36-0505}, the simplicity of the colligations $\al$, $\hat{\al}$, and \rthm{T3.4}.

\bcornl{\cite{MR0062955}}{C7.14-0519}
Let the fundamental operator $A$ of the operator colligation $\al$ be a completely non-selfadjoint dissipative Volterra operator with one-dimensional imaginary part of the form \eqref{E7.53-0503}.
Then
\[
    S_\al\rk{z}
    =\ec^{\iu\frac{\ell}{z}}.
\]
\ecor

The following assumption\ednote{result?} leads to an important class of unicellular operators.

\bthml{T7.14-0519}
Each completely non-selfadjoint Volterra operator with one-dimensional imaginary part is a unicellular operator.
\ethm

In view of the M.~S.~Liv\v{s}ic theorem~\ref{T7.13-0503} each such operator $A$ with one-dimensional imaginary part of the form \eqref{E7.53-0503} with $\ell>0$ is unitarily equivalent to the operator $\cI$ of integration (see \eqref{E7.54-0503}).
If $\ell<0$, then this property has the operator $-A$.

For this reason, \rthm{T7.14-0519} is equivalent to the following statement:

\bthmnl{\cite{MR0094717,MR0090760,MR140893}}{T7.15-0519}
The integration operator $\cI$ (see \eqref{E7.54-0503}) is a unicellular operator.
The set of its invariant subspaces coincides with the set of subspaces $\{\cL_\si\}_{0\leq\si\leq\ell}$, where $\cL_\si$ is the set of all functions $f\rk{t}\in L_2(0,\ell)$, which vanish almost everywhere in $[\si,\ell]$.
\ethm
\bproof
Let $L_1\subset L_2(0,\ell)$ be some invariant subspace of the operator $\cI$.
We consider the colligation $\hat{\al}$ (see \eqref{E3.34-0429})
\[
    \hat{\al}
    =\ocol{L_2\rk{0,\ell}}{\C}{\cI}{\Phi}{\Iu{\C}},
\]
where the operator $\Phi\in\ek{L_2\rk{0,\ell},\C}$ is defined by \eqref{E3.34-0514} and let the colligation 
\[
    \al_1
    =\ocol{L_1}{\C}{\cI_1}{\Phi_1}{\Iu{\C}}
\]
be the projection of the colligation $\hat{\al}$ onto the subspace $L_1$, \tie\ (see \rsubsec{subsec2.3.3-0519})
\[
    \cI_1\defeq\rstr{\cI}{L_1},
    \qquad\Phi_1\defeq\rstr{\Phi}{L_1}.
\]
Obviously, the operator $\cI_1$ has the Volterra property and is dissipative.
Since the operator $\cI$ is completely non-selfadjoint then in view of \rcor{C7.12-0503}, the operator $\cI_1$ is completely non-selfadjoint, too.
But then in view of \rcor{C7.14-0519}
\[
    S_{\al_1}\rk{z}
    =\ec^{\iu\frac{\ell_1}{z}},
\]
where (see \eqref{E7.58-0503})
\[
    \ell_1
    =\norm{\Phi_1^\ad \tau}^2
    =\norm{P_{L_1}\Phi^\ad \tau}^2
    \leq\norm{\Phi^\ad \tau}^2
    =\ell.
\]
Here $P_{L_1}$ is the orthoprojection in $L_2\rk{0,\ell}$ onto $L_1$, $\tau\in\C$ and $\tau=1$.

Let $\hat{\al}_\si$ be the projection of the colligation $\hat{\al}$ onto the invariant subspace $\cL_\si$ with respect to the operator $\cI$, $0\leq\si\leq\ell$:
\[
    \hat{\al}_\si
    \defeq\ocol{\cL_\si}{\C}{\cI_\si}{\Phi_\si}{\Iu{\C}}.
\]
Obviously
\[
    S_{\hat{\al}_\si}\rk{z}
    =\ec^{\iu\frac{\si}{z}}.
\]
Hence,
\[
S_{\al_1}\rk{z}
=S_{\hat{\al}_{\ell_1}}\rk{z},
\]
and the functions $S_{\al_1}\rk{z}$ and $S_{\hat{\al}_{\ell_1}}\rk{z}$ are regular right divisors of the function $S_{\hat{\al}}\rk{z}$.
In view of \rthm{T4.14-0711} it holds $L_1=\cL_{\ell_1}$ and
this means $\al_1=\hat{\al}_{\ell_1}$.
\eproof

\bcorl{C7.17-0525}
Necessary and sufficient for the denseness of the linear hull of the sequence of functions
\[
    \cI^nf(x)\qquad(f(x)\in L_2(0,\ell),\;\cI f(x)=\iu\int_x^\ell f(t)\dif t,\;n=0,1,2\dotsc)
\]
in $L_2(0,\ell)$ is that for arbitrary $\epsilon>0$ the set of all points $x$ of the interval $[\ell-\epsilon,\ell]$ for which $f(x)\neq0$ has a positive measure.
\ecor

\subsubsection{Completeness criterion for the finite-dimensional invariant subspaces of a dissipative operator with finite-dimensional imaginary part}\label{subsubsec7.5.3-0525}

Let $A\in\ek{\cH}$ be a dissipative operator with finite-dimensional imaginary part.
We embed the operator $A$ in a simple colligation (see \rthm{T2.2}, \rrem{R2.3-0929}, \rlem{L7.9-0503} and \rthm{T7.10-0503})
\[
    \al
    \defeq\ocol{\cH}{\C^r}{A}{\Phi}{\Iu{r}},
\]
where $\Iu{r}\defeq\Iu{\C^r}$ and $r\defeq\dim\clo{\ran{\im A}}$.
The function $S_\al(z)$ belongs to the class $\Omega_{\Iu{r}}\rk{\C^r}$ and, as it follows from \rthm{T6.6A-0925}, it admits the representation \eqref{E6.35-0925}.

Taking into account \eqref{E6.25-0925}, from identity \eqref{E6.36-0925} we find
\[
    \sum_{k=1}^\infty\tr\eta_k\eta_k^\ad+\ell
    =\tr\Phi\Phi^\ad.
\]
From this, the dissipativity of the operator $A$ and identity \eqref{E6.30-0925} it follows
\[\begin{split}
    \sum_{k=1}^\infty\im\la_k
    &=\frac{1}{2}\sum_{k=1}^\infty\eta_k\eta_k^\ad
    =\frac{1}{2}\sum_{k=1}^\infty\tr\eta_k^\ad\eta_k\\
    &=\frac{1}{2}\tr\Phi\Phi^\ad-\frac{\ell}{2}
    =\frac{1}{2}\tr\Phi^\ad\Phi-\frac{\ell}{2}
    =\tr\frac{A-A^\ad}{2\iu}-\frac{\ell}{2},
\end{split}\]
where $\{\la_k\}_{k=1}^\infty$ are the non-real eigenvalues of the operator $A$.
Hence,
\[
    \tr\im A
    =\sum_{k=1}^\infty\im\la_k+\frac{\ell}{2},
\]
\tie\
\beql{E7.61-0525}
    \sum_{k=1}^\infty\im\la_k
    \leq\tr\im A.
\eeq
If there is equality in this inequality then we have $\ell=0$ and in the triangular model of the operator $A$ the continuous part $\hat{A}_\cm$ is missing, \tie\ $\cH_\cm=\hat{\cH}_\cm$.

Consequently, we have the following statement.

\bthmnl{\cite{MR0062955}}{T7.18-0525}
Let $\cH$ be a Hilbert space and let $A\in\ek{\cH}$ be a dissipative operator with finite-dimensional imaginary part.
Then it holds the inequality \eqref{E7.61-0525}.
The system of finite-dimensional invariant subspaces of the operator $A$ is complete in the space $\cH$ if there is equality in \eqref{E7.61-0525}.
\ethm

We note that \rthm{T7.18-0525} is true for all completely non-selfadjoint dissipative operators with nuclear imaginary part, \tie, such operators $A$, for which the condition $\tr\im A<+\infty$ is satisfied (\cite{MR0062955}).

%
%
%
%


\subsection*{Acknowledgment}
The first author would like to express special thanks to Professor Tatjana Eisner for her generous support.
He also thanks the Max Planck Institute for Human Cognitive and Brain Sciences, in particular 
Professor Christian Doeller, Dr Christina Schroeder, and Dr Sebastian Ziegaus.

The first author was also partially supported by the Volkswagen Foundation grant within the frameworks of the international project ``From Modeling and Analysis to Approximation''.

\end{document}